\newtheorem{theorem}{Theorem}[section]
\newtheorem{definition}[theorem]{Definition}
\newtheorem{proposition}[theorem]{Proposition}
\newtheorem{remark}[theorem]{Remark}
\newcommand{\bone}{{\bf 1}}
\def\AA{\mathbb{A} }
\newcommand{\EE}{\mathbb{E} }
\newcommand{\FF}{\mathbb{F} }
\newcommand{\PP}{\mathbb{P} }
\newcommand{\RR}{\mathbb{R} }
\newcommand{\cF}{\mathcal{F} }
\newcommand{\cL}{\mathcal{L} }
\newcommand{\cP}{\mathcal{P}}
\newcommand{\cV}{\mathcal{V} }
\def\ch{\check}
\def\t{\tilde}
\def\o{\overline}
\title{ \textbf{The Master Equation for Large Population Equilibriums}\footnote{Paper presented at the  conference "Stochastic Analysis", University of Oxford, September 23, 2013}
}
\author{Ren\'e Carmona${}^{(a),}$\footnote{rcarmona@princeton.edu, Partially supported  by NSF: DMS-0806591 } \hspace*{.05cm},
Fran{\c c}ois Delarue${}^{(b),}$\footnote{delarue@unice.fr} \hspace*{.05cm} 
 \\ \\
(a) ORFE, Bendheim Center for Finance, Princeton University,
\\
Princeton, NJ  08544, USA.
\\
\\
(b) Laboratoire J.A. Dieudonn\'e, Universit\'e de Nice Sophia-Antipolis, 
\\
Parc Valrose, 06108 Nice Cedex 02, France.}
\date{}
\begin{document}
\maketitle

\begin{abstract}
We use a simple $N$-player stochastic game with idiosyncratic and common noises to introduce the concept of Master Equation originally proposed by Lions in his lectures at the \textit{Coll\`ege de France}. Controlling the limit $N\to\infty$ of the explicit solution of the $N$-player game, we highlight the stochastic nature of 
the limit distributions of the states of the players due to the fact that the random environment does not average out in the limit, and we recast the Mean Field Game (MFG) paradigm in a  set of coupled Stochastic Partial Differential Equations (SPDEs). The first one is a forward stochastic Kolmogorov equation giving the evolution of the conditional distributions of the states of the players given the common noise. The second is a form  of stochastic Hamilton Jacobi Bellman (HJB) equation providing the solution of the optimization problem when the flow of conditional distributions is given. Being 
highly coupled, the system reads as an infinite dimensional 
Forward Backward Stochastic Differential Equation (FBSDE). Uniqueness of a solution and its Markov property lead to the representation of the solution of the backward equation (i.e. the value function of the stochastic HJB equation) as a deterministic function of the solution of the forward Kolmogorov equation,  function which is usually called the \textit{decoupling field} of the FBSDE. 
The (infinite dimensional) PDE satisfied by this decoupling field is identified with the \textit{master equation}. We also show that this equation can be derived for other large populations equilibriums like those given by the optimal control of McKean-Vlasov stochastic differential equations.

The paper is written more in the style of a review than a technical paper, and we spend more time and energy motivating and explaining the probabilistic interpretation of the
Master Equation, than identifying the most general set of assumptions under which our claims are true.
\end{abstract}

\section{Introduction}
\label{se:introduction}

In several lectures given at the \textit{Coll\`ege de France},  P.L. Lions describes mean-field games by a single equation referred to as the \emph{master equation}. 
Roughly speaking, this equation encapsulates all the information about the Mean Field Game (MFG) into a single equation. The purpose of this paper is to review its theoretical underpinnings and to derive it for general MFGs with common noise.

The master equation is a Partial Differential Equation (PDE) in time, the state controlled by the players (typically an element of a Euclidean space, say $\RR^d$), and the probability distribution of this state. While the usual differential calculus is used in the time domain $[0,T]$ and in the state space $\RR^d$, the space $\cP(\RR^d)$ of probability measures needs to be endowed with a special differential calculus described in Lions' lectures, and explained in the notes Cardaliaguet wrote from these lectures, \cite{Cardaliaguet}. 
See also \cite{CarmonaDelarue_ap} and the appendix at the end of the paper.

Our goal is to emphasize the probabilistic nature of the master equation, as the associated characteristics are (possibly random) paths with values 
in the space $\RR^d \times {\mathcal P}(\RR^d)$. Our approach is especially enlightening for mean field games in a random environment (see Section \ref{se:regames} for definitions and examples), the simplest instances occurring in the presence of random shocks common to all the players. In that framework, the characteristics are given by the dynamics of 
$((X_{t},{\mathcal L}(X_{t}\vert W^0)))_{0 \leq t \leq T}$, where $(X_{t})_{0 \leq t \leq T}$ is the equilibrium trajectory of 
the game, as 
identified by the solution of the mean field game problem, and $({\mathcal L} (X_{t}\vert W^0))_{0\leq t \leq T}$ which denotes its conditional marginal distributions given the value of the common noise, 
describes the conditional distribution of the population 
at equilibrium. Examples of mean field games with a common noise were considered in \cite{GueantLasryLions.pplnm}, \cite{GomesSaude} and \cite{CarmonaFouqueSun}. Their theory is developed in the forthcoming paper \cite{CarmonaDelarueLacker} in a rather general setting.

As in the analysis of standard MFG models, the main challenge is the solution of a coupled system of a forward and a backward PDEs. However, in the random environment case, both equations are in fact stochastic PDEs (SPDEs). The forward SPDE is a Kolmogorov equation describing the dynamics of the conditional laws of the state given the common noise, and the backward SPDE is a stochastic Hamilton-Jacobi-Bellman equation describing the dynamics of the value function. Our contention is that this couple of SPDEs should be viewed as a Forward Backward Stochastic Differential Equation (FBSDE) in infinite dimension. For with this point of view, if some form of Markov property holds, it is natural to expect that the backward component can be written as a function of the forward component, this function being called the \textit{decoupling field}. In finite dimension, a simple application of It\^o's formula shows that when the decoupling field is smooth, it must satisfy a PDE. We use an infinite dimensional version of this argument to derive the master equation.  The infinite dimension version of It\^o's formula needed for the differential calculus chosen for the space of measures is taken 
from another forthcoming paper \cite{ChassagneuxCrisanDelarue} and is adapted to the case of a random environment 
in the appendix.

While the MFG approach does not ask for the solution of stochastic equations of the McKean-Vlasov type at first, the required 
fixed point argument identifies the equilibrium trajectory of the game as the \emph{de facto} solution 
of such an equation. This suggests that the tools developed for solving MFG problems could be reused for solving
optimal control problems of McKean-Vlasov dynamics. In the previous paper \cite{CarmonaDelarue_ap}, we established a suitable version of the stochastic Pontryagin principle for the control of McKean-Vlasov SDEs and highlighted the differences with the 
version of the stochastic Pontryagin principle used to tackle MFG models. Here we show in a similar way that our derivation of the master equation can be used as well for this other type of large population equilibrium problem.

The paper is organized as follows. Mean field games in a random environment are presented in Section \ref{se:regames}.
The problem is formulated in terms of a stochastic forward-backward system in infinite dimension.
A specific example, taken from \cite{CarmonaFouqueSun}, is exposed in Section
\ref{se:1stexample}. The master equation is derived explicitly. In Section 
\ref{se:ME}, we propose a more systematic approach approach of the master equation for large 
population control problems in a random environment. We consider 
both the MFG problem and the control of McKean-Vlasov dynamics. Another example, taken from
 \cite{GueantLasryLions.pplnm}, is revisited in Section \ref{se:2ndexample}. 
We end up with the proof of the chain rule along flow of random measures in the Appendix.

\vskip 4pt
\emph{When analyzed within the probabilistic framework of the stochastic maximum principle, MFGs with a common noise lead to the analysis of stochastic differential equations conditioned on the knowledge of some of the driving Brownian motions. These forms of conditioned forward stochastic dynamics are best understood in the framework of Terry Lyons' theory of rough paths. Indeed integrals and differentials with respect to the conditioned paths can be interpreted in the sense of rough paths while the meaning of the others can remain in the classical It\^o calculus framework. We thought this final remark was appropriate given the raison d'\^etre of the present volume, and our strong desire to convey our deepest appreciation to the man, and pay homage to the mathematician as a remarkably creative scientist. }

\section{Mean Field Games in a Random Environment}
\label{se:regames}
The basic purpose of mean-field game theory is to analyze asymptotic Nash equilibriums for large populations of individuals with mean-field interactions. This goes back to  
the earlier and simultaneous and independent works of Lasry and Lions in \cite{MFG1, MFG2, MFG3} and Caines, Huang and Malham\'e in \cite{HuangCainesMalhame2}.

Throughout the paper, we shall consider the problem when the individuals (also referred to as \emph{particles} or \emph{players}) are subject to two sources of noise: an idiosyncratic noise, independent from one particle to another, and a common one, accounting for the common environment in which the individuals evolve. We decide to 
model the environment by means of a zero-mean Gaussian white noise field $W^0=(W^0(\Lambda,B))_{\Lambda,B}$,
parameterized by the Borel subsets $\Lambda$ of a Polish space $\Xi$ and 
the Borel subsets $B$ of $[0,\infty)$, such that
$$
\EE\bigl[W^0(\Lambda,B)W^0(\Lambda',B')\bigr]=\nu\bigl(\Lambda \cap \Lambda'\bigr)|B\cap B'|,
$$
where we used the notation $|B|$ for the Lebesgue measure of a Borel subset of $[0,\infty)$. Here $\nu$ is a non-negative measure on $\Xi$, called the spatial intensity of $W^0$. Often we shall use the notation $W^0_t$ for $W^0(\,\cdot\,,[0,t])$, and most often, we shall simply take $\Xi=\RR^\ell$. 

We now assume that the dynamics in $\RR^d$, with $d \geq 1$, of the private state of player $i\in\{1,\cdots,N\}$ are given by stochastic differential equations (SDEs) of the form:
\begin{equation}
\label{eq:general model}
dX^i_t=b\bigl(t,X^i_t,\o\mu^N_t,\alpha^i_t\bigr)dt + \sigma\bigl(t,X^i_t,\o\mu^N_t,\alpha^i_t\bigr)dW^i_t + \int_{\Xi}\sigma^0\bigl(t,X^i_t,\o\mu^N_t,\alpha^i_t,\xi\bigr)W^0(d\xi,dt),
\end{equation}
where $W^1,\dots,W^N$ are $N$ independent Brownian motions, independent of $W^0$, 
all of them being defined on some filtered probability space $(\Omega,{\mathbb F}=({\mathcal F}_{t})_{t \geq 0},\PP)$.
For simplicity, we assume that $W^0,W^1,\dots,W^N$ are $1$-dimensional (multidimensional analogs can be handled along the same lines). The term $\o\mu^N_{t}$ denotes the empirical distribution of the particles are time $t$:
\begin{equation*}
\o \mu^N_{t} = \frac1N \sum_{i=1}^N \delta_{X_{t}^i}.
\end{equation*}
The processes $((\alpha_{t}^i)_{t \geq 0})_{1 \leq i \leq N}$ are progressively-measurable processes, with values in an open subset $A$ of some Euclidean space. They stand for control processes. The coefficients $b$, $\sigma$ and $\sigma^0$ are defined accordingly on 
$[0,T] \times \RR^d \times {\mathcal P}(\RR^d) \times A (\times \Xi)$
with values in $\RR^d$, in a measurable way, the set 
${\mathcal P}(\RR^d)$ denoting the space of probability measures on $\RR^d$ endowed with the topology of weak convergence.

The simplest example of random environment corresponds to a coefficient $\sigma^0$ independent of $\xi$. In this case, the random measure $W^0$ may as well be independent of the spatial component. In other words, we can assume that $W^0(d\xi,dt)=W^0(dt)=dW^0_t$, for an extra Wiener process $W^0$ independent of the space location $\xi$ and of the idiosyncratic noise terms $(W^i)_{1 \leq i \leq N}$, representing an extra source of noise which is \emph{common} to all the players. 

If we think of $W^0(d\xi,dt)$ as a random noise which is white in time (to provide the time derivative of a Brownian motion) and colored in space (the spectrum of the color being given by the Fourier transform of $\nu$), then the motivating  example  we should keep in mind is a function $\sigma^0$ of the form $\sigma^0(t,x,\mu,\alpha,\xi)\sim \sigma^0(t,x,\mu,\alpha)\delta(x-\xi)$ (with $\Xi = \RR^d$ and where $\delta$ is a mollified version of the delta function which we treat as the actual point mass at $0$ for the purpose of this informal discussion). In which case
the integration with respect to the spatial part of the random measure $W^0$ gives
$$
\int_{\RR^d}\sigma^0(t,X^i_t,\o\mu^N_t,\alpha^i_t,\xi)W^0(d\xi,dt)=\sigma^0(t,X^i_t,\o\mu^N_t)W^0(X^i_t,dt),
$$
which says that, at time $t$, the private state of player $i$ is subject to several sources of random shocks: its own idiosyncratic noise $W^i_t$, but also, an independent white noise shock picked up at the very location/value of his own private state.

\subsection{Asymptotics of the Empirical Distribution $\o\mu^N_t$}
\label{sub:spde}
The rationale for the MFG approach to the search for approximate Nash equilibriums for large games is based on several limiting arguments, including the analysis of the asymptotic behavior as $N\to\infty$ of the empirical distribution $\o\mu^N_t$ coupling the states dynamics of the individual players. By the symmetry of our model and de Finetti's law of large numbers, this limit should exist if we allow only exchangeable strategy profiles $(\alpha^1_t,\cdots,\alpha^N_t)$. This will be the case if we restrict ourselves to distributed strategy profiles of the form $\alpha^j_t=\alpha(t,X^j_t,\o\mu^N_t)$ for some deterministic (smooth) function $(t,x,\mu)\mapsto \alpha(t,x,\mu)\in A$.

In order to understand this limit, we can use an argument from 
propagation of chaos theory, as exposed in the lecture notes by Sznitman \cite{Sznitman}.
Another (though equivalent) way consists in discussing the action of $\bar{\mu}^N_{t}$
on test functions for $t \in [0,T]$, $T$ denoting some time horizon. Fixing a smooth test function $\phi$ with compact support in $[0,T]\times \RR^d$ and using It\^o's formula, we compute:
\begin{eqnarray*}
&&d\langle\phi(t,\,\cdot\,),\frac1N\sum_{j=1}^N\delta_{X^j_t}\rangle=\frac1N\sum_{j=1}^Nd\phi(t,X^j_t)\\
&&\phantom{??}=\frac1N\sum_{j=1}^N \bigg(\partial_t\phi(t,X^j_t) dt+\nabla\phi(t,X^j_t) \cdot dX^j_t+\frac12\text{trace}\{\nabla^2\phi(t,X^j_t)d[X^j,X^j]_t\}\bigg)\\
&&\phantom{??}=\frac1N\sum_{j=1}^N \partial_t\phi(t,X^j_t) dt +\frac1N\sum_{j=1}^N\nabla\phi(t,X^j_t) \cdot  \sigma \bigl(t,X^j_t,\o\mu^N_t,\alpha(t,X^j_t,\o\mu^N_t) \bigr)dW^j_t \\
&&\phantom{???}+ \frac1N\sum_{j=1}^N\nabla\phi(t,X^j_t)  \cdot  b\bigl(t,X^j_t,\o\mu^N_t,\alpha(t,X^j_t,\o\mu^N_t)\bigr)dt\\
&&\phantom{???}+\frac1N\sum_{j=1}^N\nabla\phi(t,X^j_t)  \cdot  \int_\Xi \sigma^0\bigl(t,X^j_t,\o\mu^N_t,\alpha(t,X^j_t,\o\mu^N_t),\xi\bigr)W^0(d\xi,dt)\\
&&\phantom{???}+\frac1{2N}\sum_{j=1}^N\text{trace}\bigg\{\bigg([\sigma\sigma^\dagger]
\bigl(t,X^j_t,\o\mu^N_t,\alpha(t,X^j_t,\o\mu^N_t)\bigr)
\\
&&\phantom{??????}+ \int_\Xi [\sigma^0\sigma^{0\dagger}]\bigl(t,X^j_t,\o\mu^N_t,\alpha(t,X^j_t,\o\mu^N_t),\xi\bigr)\nu(d\xi)\bigg)\nabla^2\phi(t,X^j_t)\bigg\}dt
\end{eqnarray*}
Our goal is to take the limit as $N\to\infty$ in this expression. Using the definition of the measures $\o\mu^N_t$ we can rewrite the above equality as:
\begin{equation*}
\begin{split}
&\langle\phi(t,\,\cdot\,),\o\mu^N_t\rangle - \langle\phi(0,\,\cdot\,),\o\mu^N_0\rangle
\\
&= O(N^{-1/2})  + \int_{0}^t 
\bigl\langle \partial_t\phi(s,\,\cdot\,),\o\mu^N_s\rangle ds 
+
\int_{0}^t 
\bigl\langle \nabla\phi(s,\,\cdot\,) \cdot b\bigl(s,\,\cdot\,,\o\mu^N_s,\alpha(s,\,\cdot\,,\o\mu^N_s)\bigr),
\o\mu^N_s\rangle ds
\\
&\phantom{?} + \frac12 \int_{0}^t 
\biggl\langle \text{trace}\bigg\{ \bigg([\sigma\sigma^\dagger]
\bigl(s,\,\cdot\,,\o\mu^N_s,\alpha(s,\,\cdot\,,\o\mu^N_s) \bigr)
\\
&\hspace{100pt}+
\int_\Xi [\sigma^0\sigma^{0\dagger}]\bigl(s,\,\cdot\, ,\o\mu^N_s,\alpha(s,\,\cdot\,,\o\mu^N_s),\xi \bigr)\nu(d\xi)\bigg)\nabla^2\phi(t,\,\cdot\,)\bigg\},\o\mu^N_s \biggr\rangle ds
\\
&\phantom{?}+ \int_{0}^t 
\bigl\langle \nabla\phi(s,\,\cdot\,) \cdot \int_\Xi \sigma^0\bigl(s,\,\cdot\, ,\o\mu^N_s,\alpha(s,\,\cdot\,,\o\mu^N_s),\xi
\bigr)W^0(d\xi,ds),\o\mu^N_s \bigr\rangle, 
\end{split}
\end{equation*}
which shows (formally) after integration by parts that, in the limit $N\to\infty$, 
$$
\mu_t=\lim_{N\to\infty}\o\mu^N_t
$$
appears as a solution of the Stochastic Partial Differential Equation (SPDE) 
\begin{equation}
\label{fo:spde}
\begin{split}
&\displaystyle d\mu_t=- \nabla \cdot \bigl[b \bigl(t,\,\cdot\,,\mu_t,\alpha(t,\,\cdot\,,\mu_t) \bigr)\mu_t \bigr] dt
-\nabla \cdot \bigg(\int_\Xi \sigma^0 \bigl(t,\,\cdot\, ,\mu_t,\alpha(t,\,\cdot\,,\mu_t),\xi \bigr)W^0(d\xi,dt)\mu_t\bigg) 
\\
&\displaystyle + \frac12 \text{trace}\bigg[\nabla^2\bigg( \bigl[\sigma\sigma^\dagger \bigr]
\bigl(t,\,\cdot\,,\mu_t,\alpha(t,\,\cdot\,,\mu_t)\bigr)+
\int_\Xi \bigl[\sigma^0\sigma^{0\dagger}\bigr]\bigl(t,\,\cdot\, ,\mu_t,\alpha(t,\,\cdot\,,\mu_t),\xi\bigr)\nu(d\xi)\bigg)\mu_t\bigg] dt.
\end{split}
\end{equation}
This SPDE reads as a stochastic Kolmogorov equation. It describes the flow of marginal distributions of the solution of a conditional McKean-Vlasov equation, namely:
\begin{equation}
\label{eq:MKV:SDE}
\begin{split}
dX_t&=b\bigl(t,X_t,\mu_{t},\alpha(t,X_{t},\mu_{t})\bigr)dt + 
\sigma\bigl(t,X_{t},\mu_{t},\alpha(t,X_{t},\mu_{t})\bigr)dW_{t} 
\\
&\hspace{15pt}+ 
\int_{\Xi}\sigma^0(t,X_{t},\mu_{t},\alpha(t,X_{t},\mu_{t}),\xi\bigr)W^0(d\xi,dt),
\end{split}
\end{equation}
subject to the constraint
$\mu_{t} = {\mathcal L}(X_{t} \vert {\mathcal F}^0_{t})$,
where $\FF^0=(\cF^0_t)_{t\ge 0}$ is the filtration generated by the spatial white noise measure $W^0$. Throughout the whole paper, the letter ${\mathcal L}$ refers to the law,
so that ${\mathcal L}(X_{t} \vert {\mathcal F}^0_{t})$ denotes the conditional law
of $X_{t}$ given ${\mathcal F}^0_{t}$. 
The connection between \eqref{fo:spde} and \eqref{eq:MKV:SDE}
can be checked by expanding $(\langle \phi(t,\cdot),\mu_{t} \rangle = {\mathbb E}(\phi(X_{t})\vert {\mathcal F}_{t}^0))_{0 \leq t \leq T}$ by means of It\^o's formula.

For the sake of illustration we rewrite this SPDE in a few particular cases which we will revisit later on:

\vskip 2pt
1. If we assume that $\sigma(t,x,\mu,\alpha)\equiv \sigma$ is a constant, that $\sigma^0(t,x,\mu,\alpha)\equiv\sigma^0(t,x)$ is also uncontrolled and that the spatial white noise is actually scalar, namely $W(d\xi,dt)=dW^0_t$ for a scalar Wiener process $W^0$ independent of the Wiener processes $(W^i)_{i\ge 1}$, then the stochastic differential equations giving the dynamics of the state of the system read
\begin{equation}
\label{fo:francoiscn}
dX^i_t=b(t,X^i_t,\o\mu^N_t,\alpha^i_t)dt + \sigma dW^i_t + \sigma^0(t,X^i_t)dW^0_t,\qquad i=1,\cdots,N
\end{equation}
and the limit $\mu_t$ of the empirical distributions satisfies the equation
\begin{equation}
\label{fo:francoismut}
\begin{split}
&d\mu_t= - \nabla \cdot \bigl[b \bigl(t,\,\cdot\,,\mu_t,\alpha(t,\,\cdot\,,\mu_t) \bigr)\mu_t \bigr] dt
-\nabla \cdot\big(\sigma^0(t,\,\cdot\, )dW^0_t\mu_t\big) 
\\
&\phantom{?????}+ \frac12 \text{trace}\Big[\nabla^2\Big(
\bigl[\sigma\sigma^\dagger + \sigma^0\sigma^{0\dagger} \bigr](t,\,\cdot\, )\Big)\mu_t\Big] dt. 
\end{split}
\end{equation}
Writing the corresponding version \eqref{eq:MKV:SDE}, rough paths theory would permit 
to express the dynamics of the path $(X_{t})_{t \geq 0}$ conditional on the values of $W^0$. This would be another way to express the dynamics of the conditional marginal 
laws of $(X_{t})_{t \geq 0}$ given $W^0$. 
\vskip 2pt

2. Note that, when the ambient noise is not present (i.e. either $\sigma^0\equiv 0$ or $W^0\equiv 0$), this SPDE reduces to a deterministic PDE. It is the Kolmogorov equation giving the forward dynamics of the distribution at time $t$ of the nonlinear diffusion process $(X_{t})_{t \geq 0}$ (nonlinear in McKean's sense).

\subsection{Solution Strategy for Mean Field Games}
\label{se:strategy}
When players are assigned a cost functional, a natural (and challenging) question is to determine 
equilibriums within the population. A typical framework is to assume that the cost to player $i$, for any $i \in \{1,\dots,N\}$, writes
\begin{equation*}
J^i(\alpha^1,\dots,\alpha^N) = \EE \biggl[ \int_{0}^T f\bigl(t,X_{t}^i,\o\mu^N_{t},\alpha_{t}^i\bigr) dt + g\bigl(X_{T}^i,\o{\mu}_{T}^N\bigr) \biggr],
\end{equation*}
for some functions $f: [0,T] \times \RR^d \times {\mathcal P}(\RR^d) \times A \rightarrow \RR$
and $g : \RR^d \times {\mathcal P}(\RR^d)\rightarrow \RR$. Keep in mind the fact that the cost 
$J^i$ depends on all the controls $((\alpha^j_{t})_{0 \leq t \leq T})_{j \in \{1,\dots,N\}}$
through the flow of empirical measures $(\o\mu^N_{t})_{0 \leq t \leq T}$. 

In the search for a Nash equilibrium $\alpha$, one assumes that all the players $j$ but one keep the same strategy profile $\alpha$, and the remaining player deviates from this strategy in the hope of being better off. If the number of players is large (think $N\to\infty$), one expects that the empirical measure $\o\mu^N_t$ will not be affected much by the deviation of one single player, and for all practical purposes, one can assume that the empirical measure $\o\mu^N_t$ is approximately equal to its limit $\mu_t$.
So in the case of large symmetric games, the search for approximate Nash equilibriums could be done through the solution of 
the optimization problem of one single player (typically the solution of a stochastic control problem instead of a large game)
when the empirical measure $\o\mu^N_t$ is replaced by the solution $\mu_t$ of the SPDE \eqref{fo:spde}  appearing
in this limiting regime, the `$\alpha$' plugged in \eqref{fo:spde} denoting the strategy used by the players at equilibrium. 

\vskip 2pt
The implementation of this method can be broken down into three steps for pedagogical reasons:
\begin{enumerate}\itemsep=-2pt
\item[(i)] Given an initial distribution $\mu_{0}$ on $\RR^d$, fix an arbitrary measure valued adapted stochastic process $(\mu_t)_{0\le t\le T}$ over the probability space of the random measure $W^0$. It stands for a possible candidate for being  a Nash equilibrium. 
\item[(ii)] Solve the (standard) stochastic control problem (with random coefficients)
\begin{equation}
\label{fo:mfgcontrolpb}
\inf_{(\alpha_{t})_{0 \leq t \leq T}}\EE\left[\int_0^Tf(t,X_t,\mu_t,\alpha_t)dt + g(X_T,\mu_T)\right]
\end{equation}
subject to  
$$
dX_t=b\bigl(t,X_t,\mu_t,\alpha_t\bigr)dt + \sigma\bigl(t,X_t,\mu_t,\alpha_t\bigr) dW_t+\int_\Xi\sigma^0\bigl(t,X_t,\mu_t,\alpha_t,\xi\bigr)W^0(d\xi,dt), 
$$
with $X_{0} \sim \mu_{0}$,
over controls in feedback form, Markovian in $X$ conditional on the past of the flow of random
measures $(\mu_{t})_{0 \leq t \leq T}$.
\item[(iii)] Plug the optimal feedback function $\alpha(t,x,\mu_t)$ in the SPDE
\eqref{fo:spde}. Then, determine the measure valued stochastic process $(\mu_t)_{0\le t\le T}$  so that the solution of the SPDE \eqref{fo:spde} be precisely $(\mu_{t})_{0 \leq t \leq T}$ itself. 
\end{enumerate}
Clearly, this last item requires the solution of a fixed point problem in an infinite dimensional space, while the second item involves the solution of an optimization
problem in a space of stochastic processes. 
Thanks to the connection between the SPDE \eqref{fo:spde} and the 
McKean-Vlasov equation \eqref{eq:MKV:SDE},
the fixed point item (iii) reduces to the search for a flow of random measures $(\mu_{t})_{0 \leq t \leq T}$ such that the law of the optimally controlled process (resulting from the solution of the second item) is in fact $\mu_t$, i.e.
$$
\forall t\in[0,T],\quad \mu_t = {\mathcal L}(X_{t} \vert {\mathcal F}_{t}^0). 
$$

In the absence of the ambient random field noise term $W^0$, the measure valued adapted stochastic process $(\mu_t)_{0\le t\le T}$ can be taken as a 
deterministic function $[0,T]\ni t\mapsto \mu_t\in \cP(\RR^d)$ and the control problem in item (ii) is a standard Markovian control problem. Moreover, the fixed point item (iii) reduces to the search for a deterministic flow of measures $[0,T]\ni t\mapsto \mu_t\in \cP(\RR^d)$ such that the optimally controlled process (resulting from the solution of the second item) 
satisfies ${\mathcal L}(X_t)=\mu_t$.

\subsection{Stochastic HJB Equation}
In this subsection, we study the stochastic control (ii) when the flow of random measures $\mu=(\mu_t)_{0\le t\le T}$ is fixed. 
Optimization is performed over the set $\AA$ of $\FF$-progressively measurable $A$-valued processes  $(\alpha_t)_{0\le t\le T}$ satisfying
$$
\EE\int_0^T|\alpha_t|^2dt<\infty.
$$
For each $(t,x)\in [0,T]\times\RR^d$, we let $(X^{t,x}_s)_{t\le s\le T}$ be the solution of the stochastic differential equation (being granted that it is well-posed)
\begin{equation}
\label{fo:cn_state}
dX_s=b(s,X_s,\mu_s,\alpha_s)ds+\sigma(s,X_s,\mu_s,\alpha_s) dW_s+\int_\Xi \sigma^0(s,X_s,\mu_a,\alpha_s,\xi)W^0(d\xi,ds),
\end{equation}
with $X_{t}=x$. 
With this notation, we define the (conditional) cost
\begin{equation}
\label{fo:stochastic_value}
J^\mu_{t,x}\bigl((\alpha_{s})_{t \leq s \leq T}\bigr)=\EE\bigg[\int_t^Tf(s,X^{t,x}_s,\mu_s,\alpha_s)ds+g(X^{t,x}_T,\mu_T)\Big| \cF^0_t\bigg]
\end{equation}
and the (conditional) value function 
\begin{equation}
\label{fo:stochastic_value:2}
V^\mu(t,x)=\underset{(\alpha_{s})_{t \leq s \leq T} \in\AA}{\text{ess inf}} \ J^\mu_{t,x}
\bigl((\alpha_{s})_{t \leq s \leq T}\bigr).
\end{equation}
We shall drop the superscript and write $X_s$ for $X^{t,x}_s$ when no confusion is possible. Under some regularity assumptions, we can show that, for each $x\in\RR^d$, $(V(t,x))_{0\le t\le T}$ is an $\FF^0$-semi-martingale and deduce, 
by identification of its It\^o decomposition, 
that it solves a form of stochastic Hamilton-Jacobi Bellman (HJB) equation.
Because of the special form of the state dynamics \eqref{fo:cn_state}, we introduce the
(nonlocal) operator symbol
\begin{equation}
\label{fo:cn_symbol}
\begin{split}
&L^*\bigl(t,x,y,z,(z^0(\xi))_{\xi \in \Xi} \bigr)
\\
&= \inf_{\alpha\in A}\bigg[ b(t,x,\mu_{t},\alpha)\cdot y
+
\frac12\text{trace}\bigl([\sigma\sigma^\dagger](t,x,\mu_{t},\alpha)
\cdot z \bigr) +f(t,x,\mu_{t},\alpha)
\\
&\hspace{15pt} + \frac12\text{trace}\biggl( \int_{\Xi}[\sigma^0\sigma^{0\dagger}](t,x,\mu_{t},\alpha,\xi) d \nu(\xi)]\cdot z\biggr)
+\int_{\Xi}
\sigma^0\bigl(t,x,\mu_{t},\alpha,\xi)\cdot z^0(\xi) d\nu(\xi) \bigg].
\end{split}
\end{equation}
Assuming that the value function is smooth enough, we can use a generalization of the dynamic programming principle to the present set-up of conditional value functions to show that $V^\mu(t,x)$ satisfies a form of stochastic HJB equation as given by a parametric family of BSDEs in the sense that:
\begin{equation}
\label{fo:sHJB}
\begin{split}
V^\mu(t,x)&= g(x)+\int_t^T L^*\bigl(s,x,\partial_{x}V(s,x),\partial^2_{x}V^\mu(s,x),
(Z^{\mu}(s,x,\xi))_{\xi \in \Xi} \bigr)ds 
\\
&\hspace{15pt}+\int_t^T Z^\mu(s,x,\xi)W^0(d\xi,ds).
\end{split}
\end{equation}
Noticing that $W^0$ enjoys the martingale representation theorem (see Chapter 1 in 
\cite{Nualart}), this result can be seen as part of the folklore of the theory of backward SPDEs 
(see for example \cite{Peng_sHJB} or \cite{Ma_sHJB}).

\subsection{Towards the Master Equation}
The definition of $L^*$ in \eqref{fo:cn_symbol}
suggests that 
the optimal feedback in \eqref{fo:stochastic_value} could be identified as a
function $\hat{\alpha}$ of $t$, $x$, $\mu_{t}$, $V^{\mu}(t,\cdot)$ and 
$Z^{\mu}(t,\cdot,\cdot)$ realizing the infimum appearing in the definition of $L^*$. Plugging such a choice for $\alpha$ in the SPDE
\eqref{fo:spde}, we deduce that the fixed point condition in the item (iii) of a definition of an
MFG  equilibrium  could be reformulated in terms of an infinite dimensional FBSDE, the forward component of which being the Kolmogorov SPDE \eqref{fo:spde} (with the specific choice of  $\alpha$)
and the backward component the stochastic HJB equation \eqref{fo:sHJB}. The forward variable would be $(\mu_{t})_{0\leq t \leq T}$ and the backward one would be 
$(V^{\mu}(t,\cdot))_{0 \leq t \leq T}$. Standard FBSDE theory suggests the existence of a \textit{decoupling field} expressing the backward variable in terms of the forward one, in other words that $V^{\mu}(t,x)$ could be written as $V(t,x,\mu_{t})$ for some function $V$, or equivalently, that
$V^{\mu}(t,\cdot)$ could be written as $V(t,\cdot,\mu_{t})$. Using a special form of It\^o's change of variable formula proven in the appendix at the end of the paper, these
decoupling fields are easily shown, at least when they are smooth, to satisfy PDEs or SPDEs in the case of FBSDEs with random coefficients. The definition of the special notion of smoothness required for this form of It\^o formula is recalled in the appendix. This is our hook to Lions's master equation. In order to make this point transparent, we strive in the sequel, to provide a better understanding of the mapping $V : [0,T] \times \RR^d \times {\mathcal P}(\RR^d)
\rightarrow \RR$ and of its dynamics.

\section{An Explicitly Solvable Model}
\label{se:1stexample}
This section is devoted to the analysis of an explicitly solvable model. It was introduced and solved in \cite{CarmonaFouqueSun}. We reproduce the part of the solution which is relevant to the present discussion.  Our interest in this model is the fact that the finite player game can be solved explicitly and the limit $N\to\infty$ of the solution can be controlled. We shall use it as a motivation and testbed for the introduction of the master equation of mean field games with a common noise.

\subsection{Constructions of Exact Nash Equilibria for the $N$-Player Game}
We denote by $X^i_t$ the log-capitalization of a bank $i\in\{1,\cdots,N\}$ at time $t$. We assume that  each bank controls its rate of borrowing and lending 
through the drift of $X^i_t$ in such a way that:
\begin{equation}
\label{fo:Xit}
dX^i_t=\left[a(m_t^N-X^i_t)+\alpha^i_t\right]dt +\sigma \bigg(\sqrt{1-\rho^2} dW^i_t+\rho dW^0_t\bigg),
\end{equation}
where $W^i_t, i=0,1,\dots, N$  are independent  scalar Wiener processes, $\sigma>0$, $a\geq 0$, and $m^N_t$ denotes the sample mean of the $X^i_t$ as defined by $m_t^N=(X^1_t+\cdots + X^N_t)/N$. So, in the notation introduced in \eqref{eq:general model}, we have
$$
b(t,x,\mu,\alpha) = a(m-x)+\alpha,\qquad\text{with}\quad m=\int_{\RR} x\mu(dx),
$$
since the drift of $(X^i_t)_{t \geq 0}$ at time $t$ depends only upon $X^i_t$ itself and the mean $m^N_t$ of the empirical distribution $\o\mu_t^N$ of $X_t=(X^1_t,\cdots,X^N_t)$, and
$$
\sigma(t,x,\mu,\alpha)=\sigma\sqrt{1-\rho^2},\qquad\text{and}\qquad \sigma^0(t,x)=\sigma\rho.
$$
Bank $i\in\{1,\cdots,N\}$ controls its rate of lending and borrowing (to a central bank) at time $t$ by choosing the control $\alpha^i_t$ in order to minimize
\begin{equation}\label{objectives}
J^i(\alpha^1,\cdots,\alpha^N)=\EE\bigg[\int_0^T f(t,X^i_t,\o\mu^N_t,\alpha^i_t)dt+g(X_{T}^{i},\o\mu^N_T)\bigg],
\end{equation}
where the running and terminal cost functions $f$ and $g$ are given by:
\begin{equation}
\label{fi}
\begin{split}
&f(t,x,\mu,\alpha) = \frac{1}{2}\alpha^2-q\alpha(m-x)+\frac{\epsilon}{2}(m-x)^2,
\\
&g(x,\mu) = \frac{c}{2}(m-x)^2,
\end{split}
\end{equation}
where, as before, $m$ denotes the mean of the measure $\mu$.
Clearly, this is a {\it Linear-Quadratic} (LQ) model and, thus, its solvability should be equivalent to the well-posedness of a matricial Riccati equation. However, given the special
structure of the interaction, the Ricatti equation is in fact scalar and can be solved explicitly as we are about to demonstrate.

Given an $N$-tuple $(\hat{\alpha}^i)_{1 \leq i \leq N}$ 
of functions from $[0,T] \times \RR$ into $\RR$, we define, for each $i\in\{1,\cdots,N\}$, 
the related value function $V^i$ by:
\[
V^i(t,x^1,\dots,x^N)=\inf_{(\alpha^i_{s})_{t \leq s \leq T}}
\EE\bigg[\int_t^T f\bigl(s,X^i_s,\mu^N_s,\alpha^i_s\bigr)ds+g_i(X_{T}^{i},\o\mu^N_T)\Big| X_t=x\bigg],
\]
with  the cost functions $f$ and $g$ given in (\ref{fi}), and where the dynamics of $(X_s^1,\dots,X_{s}^N)_{t \leq s \leq T}$ are given in \eqref{fo:Xit}
with $X_{t}^j = x^j$ for $j\in \{1,\dots,N\}$ and $\alpha^j_{s} = \hat{\alpha}^j(s,X_{s}^j)$ for $j \not = i$. 
By dynamic  programming, the $N$ scalar functions $V^i$ must satisfy the system of HJB equations:
\begin{equation*}
\begin{split}
&\partial_{t} V^i(t,x) + \inf_{\alpha \in \RR}
\bigl\{ \bigl( a(\o{x}- x^{i}) + \alpha \bigr) \partial_{x^{i}} V^i(t,x) + \frac12 \alpha^2 
- q \alpha \bigl( \o{x} - x^{i} \bigr) \bigr\}+ \frac{\epsilon}2 (\bar{x} - x^{i})^2
\\
&\hspace{15pt} + \sum_{j \not = i} \bigl( a(\o{x} - x^j) + \hat{\alpha}^j(t,x^j) \bigr) \partial_{x^{j}} V^j(t,x) +  \frac{\sigma^2}{2}\sum_{j=1}^N\sum_{k=1}^N\left(\rho^2+\delta_{j,k}(1-\rho^2)\right)\partial^2_{x^jx^k}V^i(t,x) = 0, 
\end{split}
\end{equation*}
for $(t,x) \in [0,T] \times \RR^N$, where 
we use the notation $\overline{x}$ for the mean $\overline{x}=(x^1+\cdots+x^N)/N$ and
with the terminal condition $V^i(T,x) = (c/2)(\o{x}-x^{i})^2$. The infima in these HJB equations 
can be computed explicitly:
\begin{equation*}
\begin{split}
& \inf_{\alpha \in \RR}
\bigl\{ \bigl( a(\o{x}- x^{i}) + \alpha \bigr) \partial_{x^{i}} V^i(t,x) + \frac12 \alpha^2 
- q \alpha \bigl( \o{x} - x^{i} \bigr) \bigr\}
\\
&\hspace{15pt} = a(\o{x}- x^{i})  \partial_{x^{i}} V^i(t,x) - \frac12 \bigl[ q \bigl( \o{x} - x^{i} \bigr)  - \partial_{x^{i}} V^i(t,x) \bigr]^2,  
\end{split}
\end{equation*}
the infima being attained for 
\begin{equation*}
\alpha =  q \bigl( \o{x} - x^{i} \bigr)  - \partial_{x^{i}} V^i(t,x).
\end{equation*}
Therefore, the Markovian strategies $(\hat{\alpha}^i)_{1 \leq i \leq N}$
forms a Nash equilibrium if $\hat{\alpha}^i(t,x) = q \bigl( \o{x} - x^{i} \bigr)  - \partial_{x{i}} V^i(t,x)$, which suggests to solve the system of $N$ coupled HJB equations:
\begin{equation}
\label{fo:HJB}
\begin{split}
&\partial_{t}V^i+
\sum_{j=1}^N\left[(a+q)\left(\overline{x}-x^j\right)-\partial_{x^j}V^j\right]\partial_{x^j}V^i+\frac{\sigma^2}{2}\sum_{j=1}^N\sum_{k=1}^N\left(\rho^2+\delta_{j,k}(1-\rho^2)\right)\partial_{x^jx^k}^2V^i
\\
&\phantom{?????????????}+\frac{1}{2}(\epsilon-q^2)\left(\overline{x}-x^i\right)^2+\frac{1}{2}(\partial_{x^i}V^i)^2=0,\qquad i=1,\cdots,N,
\end{split}
\end{equation}
with the same boundary terminal condition as above. 
Then, the feedback functions $\hat\alpha^i(t,x)=q(\o x-x^i)-\partial_{x^i}V^i(t,x)$ are expected to give the optimal Markovian strategies. Generally speaking, these systems of HJB equations are usually difficult to solve. Here, because the particular forms of the couplings and the terminal conditions, we can solve the system by inspection, checking that a solution can be found in the form 
\begin{equation}
\label{fo:ansatzHJB}
V^i(t,x)=\frac{\eta_t}{2}(\o x -x^i)^2+\chi_t,
\end{equation}
for some deterministic scalar functions $t\mapsto \eta_t$ and $t\mapsto\chi_t$ satisfying $\eta_T=c$ and $\chi_T=0$ in order to match the terminal conditions for the $V^i$s. Indeed, the partial derivatives  $\partial_{x^j}V^i$ and $\partial_{x^jx^k}V^i$ read
\[
\partial_{x^j}V^i(t,x)=\eta_t \bigl(\frac{1}{N}-\delta_{i,j}\bigr)\left(\overline{x}-x^i\right),\quad \partial^2_{x^jx^k}V^i(t,x)=\eta_t\bigl( \frac{1}{N}-\delta_{i,j}\bigr)(\frac{1}{N}-\delta_{i,k}).
\]
and plugging these expressions into (\ref{fo:HJB}), and identifying term by term, we see that the system of HJB equations is solved if an only if
\begin{equation}
\label{fo:etat}
\begin{cases}
&\displaystyle \dot{\eta}_t=2(a+q)\eta_t+\bigl(1-\frac{1}{N^2}\bigr)\eta_t^2-(\epsilon-q^2),
\\
&\displaystyle \dot \chi_t= -\frac{1}{2}\sigma^2(1-\rho^2)\bigl(1-\frac{1}{N}\bigr)\eta_t,
\end{cases}
\end{equation}
with the terminal conditions $\eta_T=c$ and $\chi_T=0$.
As emphasized earlier, the Riccati equation is scalar and can be solved explicitly. One gets: 
\begin{equation}
\label{fo:eta-explicit}
\eta_{t}  = \frac{-(\epsilon-q^2)\bigl(e^{(\delta^+-\delta^-)(T-t)}-1\bigr)-c\bigl(\delta^+ e^{(\delta^+-\delta^-)(T-t)}-\delta^-\bigr)}
{\left(\delta^-e^{(\delta^+-\delta^-)(T-t)}-\delta^+\right)-c(1-1/N^2)\left(e^{(\delta^+-\delta^-)(T-t)}-1\right)},
\end{equation}
provided we set:
\begin{equation}
\label{fo:deltas}
\delta^{\pm}=-(a+q)\pm\sqrt{R}, \qquad \text{with}\qquad R=(a+q)^{2}+\left(1-\frac{1}{N^2}\right)(\epsilon-q^{2})>0.
\end{equation}
Observe that 
the denominator in \eqref{fo:eta-explicit} is always negative since
$\delta^+ > \delta^-$, so that $\eta_t$ is well defined for any $t\leq T$. The condition $q^2\leq \epsilon$ implies that $\eta_t$ is positive with $\eta_T=c$. Once $\eta_t$ is computed, 
one solves for $\chi_t$ (remember that $\chi_T=0$) and finds:
\begin{equation}
\label{fo:mu-explicit}
\chi_t=  \frac{1}{2}\sigma^2(1-\rho^2)\Bigl(1-\frac{1}{N}\Bigr)\int_t^T\eta_s\,ds.
\end{equation}
For the record, we note that the optimal strategies read 
\begin{equation}
\label{fo:opt_alpha}
\hat\alpha^i_{t}=q \bigl(\o X_t-X^i_t \bigr)-\partial_{x^i}V^i=\Bigl(q+(1-\frac{1}{N})\eta_t\Bigr)\bigl(\o X_t-X^i_t\bigr),
\end{equation}
and the optimally controlled dynamics:
\begin{equation}
\label{XicontrolledHJB}
dX^i_t=\Bigl(a+q+ (1-\frac{1}{N})\eta_t\Bigr)\bigl(\overline{X}_t-X^i_t\bigr)dt +\sigma \Bigl(\sqrt{1-\rho^2} dW^i_t+\rho dW^0_t\Bigr).
\end{equation}

\subsection{The Mean Field Limit}
We emphasize the dependence upon the number $N$ of players and we now write $\eta^N_t$ and $\chi^N_t$ for the solutions $\eta_t$ and $\chi_t$ of the system \eqref{fo:etat}, and $V^{i,N}(t,x)=(\eta^N/2) (\o x -x^i)^2+\chi_t^N$ for the value function of player $i$ in the $N$ player game. Clearly, 
$$
\lim_{N\to\infty}\eta^N_t=\eta^\infty_t,\qquad\text{ and }\qquad \lim_{N\to\infty}\chi^N_t=\chi^\infty_t,\
$$
where the functions $\eta^\infty_t$ and $\chi^\infty_t$ solve the system:
\begin{equation}
\label{fo:etatinfty}
\begin{cases}
&\displaystyle \dot{\eta}^\infty_t=2(a+q)\eta^\infty_t+(\eta^\infty_t)^2-(\epsilon-q^2),
\\
&\displaystyle \dot \chi^\infty_t= -\frac{1}{2}\sigma^2(1-\rho^2)\eta^\infty_t,
\end{cases}
\end{equation}
which is solved as in the case $N$ finite. We find
\begin{equation}
\label{fo:eta_infty}
\eta^\infty_{t}  = \frac{-(\epsilon-q^2)\bigl(e^{(\delta^+-\delta^-)(T-t)}-1\bigr)-c\bigl(\delta^+ e^{(\delta^+-\delta^-)(T-t)}-\delta^-\bigr)}
{\left(\delta^-e^{(\delta^+-\delta^-)(T-t)}-\delta^+\right)-c\left(e^{(\delta^+-\delta^-)(T-t)}-1\right)},
\end{equation}
and
\begin{equation}
\label{fo:mu_infty}
\chi^\infty_t=  \frac{1}{2}\sigma^2(1-\rho^2)\int_t^T\eta^\infty_s\,ds.
\end{equation}

\vskip 6pt
Next we consider the equilibrium behavior of the players' value functions $V^{i,N}$. 
For the purpose of the present discussion we notice that the value functions $V^{i,N}$ of all the players in the $N$ player game can be written as
$$
V^{i,N}\big(t,(x^1,\cdots,x^N)\big)=V^N\bigg(t,x^i,\frac1N\sum_{j=1}^N\delta_{x^j}\bigg)
$$
where the single function $V^N$ is defined as
$$
V^N(t,x,\mu)=\frac{\eta^N_t}{2}\bigg(x-\int_\RR xd\mu(x)\bigg)^2+\chi^N_t,\qquad (t,x,\mu)\in [0,T]\times\RR\times\cP_1(\RR),
$$
where ${\mathcal P}_{1}(\RR^d)$ denotes the space of integrable probability measures on $\RR^d$. 
Since the dependence upon the measure is only through the mean of the measure, we shall often use the function 
$$
v^N(t,x,m)=\frac{\eta^N_t}{2}(x-m)^2+\chi^N_t,\qquad (t,x,m)\in [0,T]\times\RR\times\RR,
$$
Notice that, at least for $(t,x,m)$ fixed, we have
$$
\lim_{N\to\infty}v^N(t,x,m)=v^\infty(t,x,m)
$$
where 
$$
v^\infty(t,x,m)=\frac{\eta^\infty_t}{2}(x-m)^2+\chi^\infty_t,\qquad (t,x,m)\in [0,T]\times\RR\times\RR.
$$
Similarly, all the optimal strategies in \eqref{fo:opt_alpha} may be expressed 
through a single feedback function $\hat{\alpha}^{N}(t,x,m) = [q+(1-1/N) \eta_{t}^N ](m-x)$
as $\hat{\alpha}_{t}^i = \hat{\alpha}^N(t,X_{t}^i,m_{t}^N)$. Clearly, 
\begin{equation*}
\lim_{N \rightarrow \infty } \hat{\alpha}^{N}(t,x,m) =  \hat{\alpha}^{\infty}(t,x,m),
\end{equation*} 
where
$\hat{\alpha}^{\infty}(t,x,m) = [q+\eta_{t}](m-x)$. 

Repeating the analysis in Subsection \ref{sub:spde},
we find that the limit of the empirical distributions satisfies the following version of \eqref{fo:francoismut}:
\begin{equation}
\label{ex:spde}
d\mu_t=-\partial_x\bigg([a(m_t-\,\cdot\,) - \alpha^{\infty}(t,\,\cdot\,)]\mu_t\bigg)dt +\frac{\sigma^2}{2} \partial^2_{x}\mu_t dt -\sigma\rho\partial_x\mu_t dW^0_t, \quad t \in [0,T],
\end{equation}
where $m_{t} = \int_{\RR^d} x d\mu_{t}(x)$, which is the Kolmogorov equation for the conditional marginal law 
given $W^0$ of the solution of the McKean-Vlasov equation:
\begin{equation}
\label{ex:MKV:SDE}
d \overline{X}_t=\left[a \bigl(m_t-\overline{X}_t)+\alpha^{\infty}(t,\overline{X}_t)\right]dt +\sigma \left(\rho dW^0_t+\sqrt{1-\rho^2}dW_t\right),
\end{equation}
subject to the condition $m_{t} = {\mathbb E}[\overline{X}_t \vert {\mathcal F}_{t}^0]$.  
Applying the Kolmogorov equation to the test function $\phi(x)=x$, we get
\begin{equation}
\label{fo:dmt}
dm_t= \bigg(\int \alpha^{\infty}(t,x) d\mu_t(x)\bigg) dt +\sigma\rho d W^0_t.
\end{equation}

We now write the stochastic HJB equation \eqref{fo:sHJB} in the present context. Remember that we assume that the stochastic 
flow $(\mu_t)_{0 \leq t \leq T}$ is given (as the solution of \eqref{ex:spde} with some prescribed initial condition $\mu_{0}=\mu$), 
and hence so is $(m_t)_{0 \leq t \leq T}$. Here
$$
L^*(t,x,y,z,z^0)=\inf_{\alpha\in A} \left[ [a(m_t-x)+\alpha]y+\frac{\sigma^2}{2}z+\sigma\rho z^0 +\frac{\alpha^2}2 - q\alpha (m_t-x)+\frac\epsilon 2 (m_t-x)^2\right].
$$
Since the quantity to minimize is quadratic in $\alpha$, we need to compute it for 
$
\bar\alpha=\bar{\alpha}(t,x,m_{t},y)$ with $\bar{\alpha}(t,x,m,y) = q(m -x) -y$.
 We get:
 $$
 L^*(t,x,y,z,z^0)=(a+q)(m_t-x)y - \frac12 y^2+\frac{\sigma^2}2 z +\sigma\rho z^0+\frac12 (\epsilon-q^2)(m_t-x)^2.
 $$
 Accordingly, the stochastic HJB equation takes the form
\begin{eqnarray}
\label{fo:risk_sHJB}
&&d_{t} V^{\mu}(t,x)=\bigg[ - (a+q)(m_t-x)\partial_xV^{\mu}(t,x) + \frac12 [\partial_xV^{\mu}(t,x)]^2 - \frac{\sigma^2}2 \partial^2_{x}V^{\mu}(t,x) \nonumber\\
&&\phantom{??????????????????} - \sigma\rho \partial_xZ^{\mu}(t,x) - \frac12 (\epsilon-q^2)(m_t-x)^2 \bigg]dt - Z^{\mu}(t,x) dW^0_t,
\end{eqnarray}
with the boundary condition $V^{\mu}_{t}(x)=(c/2)(m_{T}-x)^2$.

\subsection{Search for a Master Equation}
A natural candidate for solving
\eqref{fo:risk_sHJB} is the random field $(t,x) \mapsto v^{\infty}(t,x,m_{t})$, where as above 
$(m_{t})_{0 \leq t \leq T}$ denotes the means of the solution $(\mu_{t})_{0 \leq t \leq t}$
of the Kolmogorov SPDE \eqref{ex:spde}. This can be checked rigorously by using the expression of 
$v^{\infty}$ and by expanding $(v^{\infty}(t,x,m_{t}))_{0 \leq t \leq T}$ by It\^o's formula
(taking advantage of \eqref{fo:dmt}). As suggested at the end of the previous section, this shows that 
the stochastic HJB equation admits a solution $V^{\mu}(t,x)$ that can be expressed as a function of the current value 
$\mu_{t}$ of the solution of the Kolmogorov SPDE, namely
\begin{equation*}
V^{\mu}(t,x) = v^{\infty} \biggl(t,x,\int_{\RR^d} x' d\mu_{t}(x') \biggr). 
\end{equation*}
The same argument shows that $(\overline{X}_{t})_{0 \leq t \leq T}$ defined in \eqref{ex:MKV:SDE} as a solution of a McKean-Vlasov SDE is in fact
the optimal trajectory of the control problem considered in the item (ii) of the definition of a MFG, see 
\eqref{fo:mfgcontrolpb}, under the solution $(\mu_{t})_{0 \leq t \leq T}$ of the 
stochastic PDE \eqref{ex:spde}. Put it differently, $(\mu_{t})_{0 \leq t \leq T}$
is a solution of the MFG and the function $\alpha^{\infty}$ is the associated feedback control, as suggested by the asymptotic analysis performed in the previous paragraph. 

A natural question is to characterize the dynamics of the function $v^{\infty}$ 
in an intrinsic way. 
By definition of the value function (see \eqref{fo:stochastic_value:2}), we have
$$
V^{\mu}(t,\overline{X}_t)=\EE\bigg[\int_t^Tf\bigl(s,\overline{X}_s,\mu_s,\hat\alpha^{\infty}(s,\overline{X}_{s}) \bigr)ds
+ g\bigl( \overline{ X}_{T},\mu_{T}\bigr) \big|\cF_t \bigg]$$
so that 
$$
dV^{\mu}(t,\overline{X}_t)=-f\bigl(t,\overline{X}_t,\mu_t,\hat{\alpha}^{\infty}(t,\overline{X}_{t})\bigr)dt+ dM_t, \quad 
t \in [0,T],
$$
for some $(\cF_t)_{0 \leq t \leq T}$-martingale $(M_{t})_{0 \leq t \leq T}$. 
Recalling that $\bar{\alpha}(t,x,m,y)= q(m -x) -y$, $\partial_{x} v^{\infty}(t,x,m) = \eta_{t}^{\infty}(x-m)$,
and $\hat{\alpha}^{\infty}(t,x,m) = [q+\eta_{t}](m-x)$, we deduce that
\begin{equation*} 
\hat{\alpha}^{\infty}(t,x,m) = \bar{\alpha}\bigl(t,x,m,\partial_{x} v^{\infty}(t,x,m)\bigr),
\end{equation*}
which is the standard relationship in stochastic optimal control for expressing the optimal feedback in terms of the minimizer 
$\bar{\alpha}$ of the underlying extended Hamiltonian and of the gradient of the value function $v^{\infty}$. 
We deduce that 
\begin{equation*}
\begin{split}
f\bigl(t,\overline{X}_t,\mu_t,\hat{\alpha}^{\infty}(t,\overline{X}_{t})\bigr)
&= - \frac12 \bigl( q(m_{t}- \overline{X}_{t}) - \partial_{x} v^{\infty}(t,\overline{X}_{t},m_{t}) \bigr)
\bigl( q(m_{t}- \overline{X}_{t}) + \partial_{x} v^{\infty}(t,\overline{X}_{t},m_{t}) \bigr)
\\
&\hspace{15pt}+ \frac{\epsilon}2 \bigl( m_{t} - \overline{X}_{t} \bigr)^2,
\end{split}
\end{equation*}
so that
\begin{equation}
\label{fo:dVmu}
dV^{\mu}(t,\overline{X}_t)= \Big(- \frac12 (\epsilon-q^2)(m_t-\overline{X}_t)^2 - \frac12 \bigl[\partial_x v^{\infty}(t,\overline{X}_{t},m_{t}) \bigr]^2\Big)dt +dM_t.
\end{equation}
We are to compare this It\^o expansion with the It\^o expansion of $(v^{\infty}(t,\overline{X}_{t},m_{t}))_{0 \leq t \leq T}$.
Using the short-hand notation 
$v_t^{\infty}$ for $v^{\infty}(t, \overline{X}_t,m_t)$ and standard It\^o's formula, we get:
\begin{equation}
\label{fo:risk_sHJB:2}
\begin{split}
d v^{\infty}_{t}
&=\partial_tv_t^{\infty} dt+\partial_x v_t^{\infty} d\overline{X}_t + \partial_mv_t^{\infty} dm_t 
+\frac{\sigma^2}2 \partial^2_{xx}v^{\infty}_t +\frac{\sigma^2}2 \rho^2  \partial^2_{mm}v^{\infty}_t 
+\sigma^2\rho^2\partial^2_{xm}v_t^{\infty}
\\
&=\Big[\partial_t v_t^{\infty}+\partial_x v_t^{\infty} a(m_t-\overline{X}_t) +\partial_x v_t^{\infty} \hat{\alpha}^{\infty}
(t,\overline{X}_t)
+ \partial_m v_t^{\infty} \langle\mu_t,\alpha^{\infty}(t,\,\cdot\,)\rangle
\\
&\hspace{30pt}+\frac{\sigma^2}2 \partial^2_{x}v^{\infty}_t +\frac{\sigma^2}2 \rho^2  \partial^2_{m}v^{\infty}_t 
+\sigma^2\rho^2\partial^2_{xm}v^{\infty}_{t} \bigg] dt
\\
&\hspace{15pt} 
+ \sigma\rho[\partial_xv_t^{\infty} + \partial_m v_t^{\infty}] dW^0_t + \sigma\sqrt{1-\rho^2}\partial_xv_t^{\infty} dW_t.
\end{split}
\end{equation}
Identifying the bounded variation terms in \eqref{fo:dVmu} and \eqref{fo:risk_sHJB:2}, we get:
\begin{equation*}
\begin{split}
&\partial_t v_t^{\infty}+\partial_x v_t^{\infty} a(m_t-\overline{X}_t) +\partial_x v_t^{\infty} \hat{\alpha}^{\infty}
(t,\overline{X}_t)
+ \partial_m v_t^{\infty} \langle\mu_t,\alpha^{\infty}(t,\,\cdot\,)\rangle
\\
&\hspace{15pt} +\frac{\sigma^2}2 \partial^2_{x}v^{\infty}_t +\frac{\sigma^2}2 \rho^2  \partial^2_{m}v^{\infty}_t 
+\sigma^2\rho^2\partial^2_{xm}v^{\infty}_{t}
= - \frac12 (\epsilon-q^2)(m_t-\overline{X}_t)^2 - \frac12 \bigl[\partial_x v^{\infty}_{t}\bigr]^2,
\end{split}
\end{equation*}
where $\hat{\alpha}^{\infty}(t,x,m) = q(m-x) - \partial_{x} v^{\infty}(t,x,m)$. Therefore,
for a general smooth function $V:(t,x,m) \mapsto V(t,x,m)$, the above relationship with 
$v^{\infty}$ replaced by $V$ holds if
\begin{equation}
\label{fo:ME}
\begin{split}
&\partial_tV(t,x,m) +(a+q)(m-x)\partial_x V(t,x,m)  + \frac12 (\epsilon-q^2)(m-x)^2
- \frac12 [\partial_xV(t,x,m)]^2
\\
&\hspace{15pt} + \frac{\sigma^2}{2}\partial^2_{x}V(t,x,m) +\frac{\sigma^2}2 \rho^2  \partial^2_{m}V(t,x,m) 
+\sigma^2\rho^2\partial^2_{xm}V(t,x,m) = 0,
\end{split}
\end{equation}
for all $(t,x,m) \in [0,T] \times \RR^d \times \RR^d$
provided we have
\begin{equation}
\label{fo:restriction}
\int \partial_xV(t,x,m)d\mu(x)=0,\qquad\qquad 0\le t\le T,
\end{equation}
\eqref{fo:restriction} being used to get rid of the interaction between $\mu_{t}$ and $\alpha^{\infty}$. 
Obviously, $v^{\infty}$ satisfies \eqref{fo:restriction}. 
(Notice that this implies that the stochastic Kolmogorov equation becomes:
$dm_t=\rho\sigma dW^0_t$.)

Equation \eqref{fo:ME} reads as the dynamics for the decoupling field permitting to express 
the value function $V^{\mu}$ as a function of the current statistical state $\mu_{t}$ of the population. 
We call it the master equation of the problem.

\section{The Master Equation}
\label{se:ME}
While we only discussed mean field games so far, it turns out that the concept of master equation applies as well to the control of 
dynamics of McKean-Vlasov type whose solution also provides approximate equilibriums for large populations of individuals interacting through mean field
terms. See \cite{CarmonaDelarue_ap} for a detailed analysis. We first outline a procedure common to the two problems. Next we specialize this procedure to the
two cases of interest, deriving a master equation in each case. Finally, we highlight the differences to better understand what differentiates these
two related problems.
 
\subsection{General Set-Up}
Stated in loose terms, the problem is to minimize the quantity
\begin{equation}
\label{fo:cost}
\EE\bigg[\int_0^T f(s,X_s^{\alpha},\mu_s,\alpha_s)ds+g(X_T^{\alpha},\mu_T)\bigg]
\end{equation}
over the space of square integrable $\FF$-adapted controls $(\alpha_s)_{0\le s\le T}$ under the constraint that 
\begin{equation}
dX_s^{\alpha}=b\bigl(s,X_s^{\alpha},\mu_{s},\alpha_s\bigr)ds + 
\sigma(s,X_{s}^{\alpha},\mu_{s},\alpha_{s})dW_{s} + 
\int_{\Xi}\sigma^0(s,X_{s}^{\alpha},\mu_{s},\alpha_s,\xi)W^0(d\xi,ds).
\label{fo:MKV:SDE}
\end{equation}
Yet the notion of what we call a minimizer must be specified. Obvious candidates for a precise definition of the minimization problem lead to 
 different solutions. We consider two specifications: on the one hand, \textit{mean field games} and 
\textit{control of McKean-Vlasov dynamics} on the other. 
\vspace{2pt}

1. When handling mean-field games, minimization 
is performed along a frozen flow of measures $(\mu_{s}=\hat{\mu}_{s})_{0 \leq s \leq T}$ describing 
a statistical equilibrium of the population, and the stochastic process  
$(\hat{X}_{s})_{0\leq s \leq T}$ formed by the optimal paths of the optimal control problem \eqref{fo:cost}
is required to satisfy the matching constraints $\hat{\mu}_s= \cL(\hat{X}_s|\cF^0_s)$ for $0\le s\le T$. This is exactly the procedure
described in Subsection \ref{se:strategy}. 

2. Alternatively, minimization can be performed over the set of all the solutions 
of \eqref{fo:MKV:SDE} subject to the McKean-Vlasov constraint 
$(\mu_s=\mu_{s}^{\alpha})_{0\leq s \leq T}$, with
$\mu_{s}^{\alpha} = \cL(X_s^{\alpha}|\cF^0_s)$ for $0\le s\le T$, in which case the problem consists 
in minimizing the cost functional \eqref{fo:cost} over McKean-Vlasov diffusion processes. 
\vspace{2pt}

As  discussed painstakingly in \cite{CarmonaDelarueLaChapelle}, the two problems have different solutions 
since, in mean field games, the minimization is performed first and the fitting of the distribution of the optimal paths 
is performed next, whereas in the control of McKean-Vlasov dynamics, the McKean-Vlasov constraint is imposed first and
the minimization is handled next. Still, we show here that both problems can be reformulated in terms of master equations,
and we highlight the differences between the two in these reformulations. 

\vskip 4pt
The reason for handling both problems within a single approach is that in both cases, we rely on manipulations of a
\textit{value function} defined over the \textit{enlarged} state space
$\RR^d\times {\mathcal P}_{2}(\RR^d)$. For technical reasons, we restrict ourselves to measures in ${\mathcal P}_{2}(\RR^d)$ which denotes the space of 
square integrable probability measures (i.e. probability measures with a finite second moment). Indeed, for each $(t,x,\mu)\in [0,T]\times \RR^d\times {\mathcal P}_{2}(\RR^d)$,
we would like to define $V(t,x,\mu)$ as the expected future costs:
\begin{equation}
\label{fo:value function}
V(t,x,\mu) = \EE\bigg[\int_t^T f(s,X_s^{\hat{\alpha}},\hat{\mu}_s,\hat{\alpha}_s)ds+g(X_T^{\hat{\alpha}},\hat{\mu}_T)\big| X_{t}^{\hat{\alpha}} = x\bigg],
\end{equation}
where $\hat{\alpha}$ minimizes the quantity 
\eqref{fo:cost} when we add the constraint $\mu_t=\mu$ and compute the time integral between $t$ and $T$. In other words:
\begin{equation}
\label{fo:minimizer:value function}
(\hat{\alpha}_{s})_{t \leq s \leq T} = \textrm{argmin}_{\alpha} \EE\bigg[\int_t^T f(s,X_s^{\alpha},\mu_s,\alpha_s)ds+g(X_T^{\alpha},\mu_T) \bigg],
\end{equation}
the rule for computing  the infimum being as explained above, 
either from the mean field game procedure as in 1, 
or from the optimization over McKean-Vlasov dynamics as explained in 2. In both cases,
the flow $(\hat{\mu}_{s})_{t \leq s \leq T}$ appearing in \eqref{fo:value function} satisfies the fixed point condition 
$(\hat{\mu}_{s}={\mathcal L}(X_{s}^{\hat{\alpha}} \vert {\mathcal F}^{0,t}_{s}))_{t \leq s \leq T}$, 
which is true in both cases as $(X_{s}^{\hat{\alpha}})_{t \leq s \leq T}$ is an optimal path.
Here and in the following  $(\cF^{0,t}_{s})_{t\le s\le T}$ is the filtration generated by the future increments of the common noise $W^0$, in the sense that 
$\cF^{0,t}_{s} = \sigma \{W^0_r-W^0_t:\, \,t\le r\le s\}$. Recall that we use the notation $W^0_r$ for $\{W^0(\Lambda,[0,r)\}_\Lambda$ when $\Lambda$ varies through the Borel subsets of $\Xi$. Below, the symbol `hat' always refers to optimal quantities, and $(X_{s}^{\hat{\alpha}})_{t \leq s \leq T}$ is sometimes denoted by $(\hat{X}_{s})_{t \leq s \leq T}$. 

\vskip 6pt
Generally speaking, the definition of the (deterministic)  function $V(t,x,\mu)$ makes sense whenever the minimizer $(\hat{\alpha}_{s})_{t \leq s \leq T}$ exists 
and is unique. 
When handling mean-field games, some additional precaution is needed to guarantee the consistency of 
the definition. Basically, we also need that, given the initial distribution $\mu$ at time $t$, there exists a unique equilibrium flow of conditional probability measures 
$(\hat{\mu}_{s})_{t \leq s \leq T}$ satisfying $\hat{\mu}_{t}= \mu$ and $\hat{\mu}_{s} = {\mathcal L}(\hat{X}_{s} \vert {\mathcal F}^{0,t}_{s})$
for all $s \in [t,T]$,
where $(\hat{X}_{s})_{t \leq s \leq T}$ is the optimal path of the underlying minimization problem (performed under the fixed flow of measures $(\hat{\mu}_{s})_{t \leq s \leq T}$).
In that case, the minimizer $(\hat{\alpha}_{s})_{t \leq s \leq T}$
reads as the optimal control of $(\hat{X}_{s})_{t \leq s \leq T}$.
In the case of the optimal control of McKean-Vlasov 
stochastic dynamics, minimization is performed over the set of \emph{conditional McKean-Vlasov diffusion processes} with the prescribed initial distribution $\mu$ at time $t$, in other words, satisfying 
\eqref{fo:MKV:SDE} with ${\mathcal L}(X_{t}) = \mu$ and 
$\mu_{s} = \mu_{s}^{\alpha} = {\mathcal L}(X_{s}^{\alpha} \vert {\mathcal F}^{0,t}_{s})$ for all $s \in [t,T]$. 
In that case, the mapping $(t,\mu) \mapsto \int_{\RR^d} V(t,x,\mu) d\mu(x)$ appears as the value function of the optimal control problem:
\begin{equation}
\label{eq:MKV:opt:mean}
{\mathbb E} \bigl[ V(t,\chi,\mu) \bigr] = \inf_\alpha \EE\bigg[\int_t^{T} f\bigl(s,X_s^{\alpha},
{\mathcal L}(X_{s}^{\alpha} \vert {\mathcal F}_{s}^{0,t}),\alpha_{s}\bigr) ds
+g\bigl(X_{T}^{\alpha},{\mathcal L}(X_{T}^{\alpha} \vert {\mathcal F}_{T}^{0,t}) \bigr) 
\bigg],
\end{equation}
subject to $X_{t}^{\alpha}=\chi $ where $\chi$ is a random variable with distribution $\mu$, i.e. $\chi\sim \mu$.  

\vskip 6pt
Our goal is to characterize the function $V$ as the solution of a partial differential equation (PDE) on the space $[0,T] \times \RR^d\times\cP_2(\RR^d)$.
In the framework of mean-field games,  such an equation was touted in several presentations, and called the \emph{master equation}. See for example \cite{Lions}, \cite{Cardaliaguet} or \cite{GomesSaude}. We discuss the derivation of this equation below in Subsection \ref{sub:mfg}. 
Using a similar strategy, we also derive a master equation in the case of the optimal control of 
McKean-Vlasov 
stochastic dynamics in Subsection \ref{sub:mkv} below.

\subsection{Dynamic Programming Principle}
\label{subse:DPP}
In order to understand better the definition \eqref{fo:value function}, we consider the case in which 
the minimizer $(\hat{\alpha}_{s})_{t \leq s \leq T}$ has a feedback form, namely 
$\hat{\alpha}_{s}$ reads as $\hat{\alpha}(s,X_{s}^{\hat{\alpha}},\hat{\mu}_{s})$
for some function $\hat{\alpha} : [0,T] \times \RR^d \times {\mathcal P}_{2}(\RR^d) \rightarrow \RR$. In this case, 
\eqref{fo:value function} becomes
\begin{equation}
\label{fo:value function:2}
V(t,x,\mu) = \EE\bigg[\int_t^T f\bigl(s,X_s^{\hat{\alpha}},\hat{\mu}_s,\hat{\alpha}(s,X_{s}^{\hat{\alpha}},
\hat{\mu}_{s}) \bigr)ds+
g({X}_T^{\hat{\alpha}},\hat{\mu}_T) \big\vert X_{t}^{\alpha}= x \bigg],
\end{equation}
where $(X_{s}^{\hat{\alpha}})_{t \leq s \leq T}$ is the solution (if well-defined) of 
\eqref{fo:MKV:SDE} with $\alpha_{s}$ replaced by 
$\hat{\alpha}(s,X_{s}^{\hat{\alpha}},\hat{\mu}_{s})$. It is worth recalling that, in that writing, 
$\hat{\mu}_{s}$ matches the conditional law ${\mathcal L}(X_{s}^{\hat{\alpha}}\vert
{\mathcal F}_{s}^{0,t})$ and is forced to start from $\hat{\mu}_{t} =\mu$ at time $t$. 
\vskip 2pt

Following the approach used in finite dimension, a natural strategy is then to use \eqref{fo:value function:2} as a basis for 
deriving a dynamic programming principle for $V$. Quite obviously, a very convenient way to do so consists in requiring the
optimal pair  
$(\hat{X}_{s}=X_{s}^{\hat{\alpha}},\hat{\mu}_{s})_{t \leq s \leq T}$
to be Markov in $\RR^d \times {\mathcal P}_{2}(\RR^d)$, in which case
we get
\begin{equation*}
\begin{split}
& V(t+h,X_{t+h}^{\hat{\alpha}},\hat{\mu}_{t+h}) 
\\
&\hspace{15pt}= 
 \EE\bigg[\int_{t+h}^T f(s,X_s^{\hat{\alpha}},\hat{\mu}_s,\hat{\alpha}_s)ds
 +g(X_T^{\hat{\alpha}},\hat{\mu}_T) \bigl\vert 
 {\mathcal F}_{t+h}^{0,t} \vee \sigma\bigl\{X_t^{\hat{\alpha}},(W_{s}-W_{t})_{s \in [t,t+h]} \bigr\}
 \bigg].
\end{split}
\end{equation*} 
Here, the $\sigma$-field ${\mathcal F}_{t+h}^{0,t} \vee \sigma\{X_t^{\hat{\alpha}},(W_{s}-W_{t})_{s \in [t,t+h]}\}$
comprises all the events observed up until time $t+h$.

The rigorous proof of the 
Markov property for the path 
$(\hat{X}_{s}=X_{s}^{\hat{\alpha}},\hat{\mu}_{s})_{t \leq s \leq T}$
is left open. Intuitively, it sounds reasonable to expect that the Markov property holds if, for any initial 
distribution $\mu$, there exists a unique equilibrium $(\hat{\mu}_{s})_{t \leq s \leq T}$
starting from $\hat{\mu}_{t}=\mu$ at time $t \in [0,T]$. The reason is that, when uniqueness holds, there is no need to investigate the past of the optimal path in order to decide of the future of the dynamics. Such an argument is somehow quite generic in probability theory. In particular, the claim is expected to be true in both cases, whatever the meaning of what an equilibrium is. Of course,
this suggests that the following dynamic version of \eqref{fo:value function}
\begin{equation}
\label{fo:DPP:MKV}
V(t,x,\mu) = \EE\bigg[\int_t^{t+h} f(s,X_s^{\hat{\alpha}},\hat{\mu}_s,\hat{\alpha}_s)ds+
V\bigl(t+h,X_{t+h}^{\hat{\alpha}},\hat{\mu}_{t+h}\bigr) \bigl\vert X_{t}^{\hat{\alpha}} = x\bigg]
\end{equation}
must be valid. The fact that \eqref{fo:DPP:MKV} should be true in both cases
is the starting point for our common analysis of the master equation. For instance, as a by-product of \eqref{fo:DPP:MKV}, 
we can derive a variational form of the dynamic programming principle:
\begin{equation}
\label{fo:DPP:weak}
{\mathbb E} \bigl[ V(t,\chi,\mu) \bigr] = \inf \EE\bigg[\int_t^{t+h} f(s,X_s^{\alpha},\mu_s,\alpha_s)ds+V(t+h,X_{t+h}^{\alpha},\mu_{t+h}) \bigg],
\end{equation}
which must be true in both cases as well, provided the random variable $\chi$ has distribution $\mu$, i.e. $\chi \sim \mu$, the minimization being defined as above according to the situation.
  
The proof of \eqref{fo:DPP:weak} is as follows. First, we observe from \eqref{fo:value function:2}
that \eqref{fo:DPP:weak} must be valid when $t+h=T$. Then, \eqref{fo:DPP:MKV}
implies that the left-hand side is greater than the ride-hand side by choosing 
$(\hat{\alpha}_{s})_{t \leq s \leq T}$ as a control. To prove the converse inequality, we choose an 
arbitrary control $(\alpha_{s})_{t \leq s \leq t+h}$ between times $t$ and $t+h$. In the control of McKean-Vlasov 
dynamics, this means that the random measures $(\mu_{s})_{t \leq s \leq t+h}$ are chosen accordingly, 
as they depend on $(\alpha_{s})_{t \leq s \leq t+h}$, so that $\mu_{t+h}$ is equal to 
the conditional law of $X_{t+h}^{\alpha}$ at time $t+h$. At time $t+h$, this permits to 
switch to the optimal strategy
starting from $(X_{t+h}^{\alpha},\mu_{t+h})$. The resulting strategy
is of a greater cost than the optimal one. 
By \eqref{fo:value function:2}, this cost is exactly given by the right-hand side in \eqref{fo:DPP:weak}. 

\vskip 2pt
In the framework of mean field games, the argument for proving that the left-hand side is less than the right-hand side 
in \eqref{fo:DPP:weak}
is a bit different. The point is that
the flow $(\mu_{s})_{t \leq s \leq T}$ is fixed and matches $(\hat{\mu}_{s})_{t \leq s \leq T}$, so that 
$\hat{\alpha}(s,X_{s}^{\hat{\alpha}},\hat{\mu}_{s})$ reads as an optimal control for optimizing
\eqref{fo:cost}
in the \textit{environment} $(\mu_{s}=\hat{\mu}_{s})_{t \leq s \leq T}$.
So in that case, 
$V(t,x,\mu)$ is expected to match the optimal conditional cost
\begin{equation}
\label{fo:value function:MFG}
V(t,x,\mu) = \inf \EE\bigg[\int_t^T f(s,X_s^{\alpha},\hat{\mu}_s,\alpha_s)ds+g(X_T^{\alpha},\hat{\mu}_T) \bigl\vert X_{t}^{\alpha} = x\bigg],
\end{equation}
where $(X_{s}^{\alpha})_{t \leq s \leq T}$ solves the SDE 
\eqref{fo:MKV:SDE} with $(\mu_{s}=\hat{\mu}_{s})_{t \leq s \leq T}$ therein. 
Going back to \eqref{fo:DPP:weak}, the choice of an 
arbitrary control $(\alpha_{s})_{t \leq s \leq t+h}$ between times $t$ and $t+h$
doesn't affect the value of 
$(\mu_{s})_{t \leq s \leq t+h}$, which remains equal to 
$(\hat{\mu}_{s})_{t \leq s \leq t+h}$. 
At time $t+h$, this permits to 
switch to the optimal strategy
starting from $X_{t+h}^{\alpha}$ in the environment $(\hat{\mu}_{s})_{t \leq s \leq T}$. Again, the resulting strategy
is of a greater cost than the optimal one and, by \eqref{fo:value function:2}, this cost is exactly given by the right-hand side in \eqref{fo:DPP:weak}. 
\vskip 2pt

We emphasize that, when controlling McKean-Vlasov dynamics, 
\eqref{fo:value function:MFG} fails as in that case, the flow of measures is not frozen during
the minimization procedure. 
In particular, the fact that 
\eqref{fo:value function:MFG} holds true in mean-field games only
suggests that 
$V$ satisfies a stronger dynamic programming principle in that case:
\begin{equation}
\label{fo:DPP:MFG}
V(t,x,\mu) = \inf \EE\bigg[\int_t^{t+h} f(s,X_s^{\alpha},\hat{\mu}_s,\alpha_s)ds+
V\bigl(t+h,X_{t+h}^{\alpha},\hat{\mu}_{t+h}\bigr) \bigl\vert X_{t}^{\alpha} = x\bigg].
\end{equation}
The reason is the same as above. On the one hand, 
\eqref{fo:DPP:MKV}
implies that the left-hand side is greater than the ride-hand side by choosing 
$(\hat{\alpha}_{s})_{t \leq s \leq T}$ as a control. On the other hand, 
choosing an arbitrary control 
$(\hat{\alpha}_{s})_{t \leq s \leq t+h}$ between $t$ and $t+h$
and switching to the optimal control 
starting from $X_{t+h}^{\alpha}$ in the environment $(\hat{\mu}_{s})_{t \leq s \leq T}$, the 
left-hand side must be less than the right-hand side. 
\subsection{Derivation of the Master Equation}
\label{subse:derivation}

As illustrated earlier (see also the discussion of the second example below), the derivation of the master equation can be based 
on a suitable chain rule for computing the dynamics of $V$ along paths of the form \eqref{fo:MKV:SDE}. 
This requires $V$ to be smooth enough in order to apply an \emph{It\^o like formula}. 

In the example tackled in the previous section, the dependence of $V$ upon 
the measure reduces to a dependence upon the mean of the measure, and 
a standard version of It\^o's formula could be used. In general, the measure argument lives 
in infinite dimension and different tools are needed. The approach advocated by P.L. Lions in his lectures at the 
\textit{Coll\`ege de France} suggests to \textit{lift-up}
the mapping $V$ into 
\begin{equation*}
\tilde{V} :  [0,T]\times\RR^d\times L^2(\tilde{\Omega},\tilde{\mathcal F},\tilde{\mathbb P};\RR^d)\ni (t,x,\tilde{\chi})
\mapsto \tilde{V}(t,x,\tilde{\chi}) = V(t,x,{\mathcal L}(\tilde{\chi})),
\end{equation*}
where $(\tilde{\Omega},\tilde{\mathcal F},\tilde{\mathbb P})$ can be viewed as a copy of 
the space $(\Omega,\mathcal F,\PP)$. The resulting $\tilde{V}$ is defined on the product of $[0,T]\times\RR^d$ and
a Hilbert space, for which the standard notion of Fr\'echet differentiability can be used. Demanding $V$ to be smooth in the measure 
argument is then understood as demanding $\tilde{V}$ to be smooth in the Fr\'echet sense. 
In that perspective, expanding $(V(s,X_{s}^{\alpha},\mu_{s}))_{t \leq s \leq T}$ is then 
the same as expanding $(\tilde{V}(s,X_{s}^{\alpha},\tilde{\chi}_{s}))_{t \leq s \leq T}$, where the process 
$(\tilde{\chi}_{s})_{t \leq s \leq T}$ is an It\^o process with $(\mu_{s})_{t \leq s \leq T}$ as flow
of marginal conditional distributions (conditional on ${\mathcal F}^{0,t}$). 

The fact that we require $(\tilde{\chi}_{s})_{t \leq s \leq T}$ to have $(\mu_{s})_{t \leq s \leq T}$
as flow of marginal conditional distributions calls for some precaution in the construction of the lifting. 
A way to do just this consists in writing  $(\Omega,{\mathcal F},\PP)$ in the form 
$(\Omega^{0} \times \Omega^1,{\mathcal F}^0 \otimes {\mathcal F}^1,\PP^0 \otimes \PP^1)$, 
$(\Omega^0,{\mathcal F}^0,\PP^0)$ supporting the common noise $W^0$,
and $(\Omega^1,{\mathcal F}^1,\PP^1)$ the idiosyncratic noise $W$. 
So an element $\omega \in \Omega$ can be written as $\omega=(\omega^0,\omega^1)
\in \Omega^0 \times \Omega^1$. Considering a copy $(\tilde{\Omega}^1,\tilde{\mathcal F}^1,
\tilde\PP^1)$ of the space $(\Omega^1,\mathcal F^1,
\PP^1)$, it then makes sense to consider the process $(\tilde{\chi}_{s})_{t \leq s \leq T}$
as the solution of an equation of the same form 
of \eqref{fo:MKV:SDE}, but on the space
$(\Omega^{0} \times \tilde{\Omega}^1,{\mathcal F}^0 \otimes \tilde{\mathcal F}^1,\PP^0 \otimes \tilde\PP^1)$,
$(\tilde{\Omega}^1,\tilde{\mathcal F}^1,\tilde\PP^1)$ being 
endowed with a copy $\tilde{W}$ of $W$.  The realization at some $\omega^0 \in \Omega^0$ of the 
conditional law of 
$\tilde{\chi}_{s}$ given ${\mathcal F}^0$ then reads as the law of the random variable 
$\tilde{\chi}_{s}(\omega^0,\cdot) \in L^2(\tilde{\Omega}^1,\tilde{\mathcal F}^1,\tilde{\PP}^1;\RR^d)$. 
Put in our framework,
this makes rigorous the identification of ${\mathcal L}(\tilde{\chi}_{s}(\omega^0,\cdot))$ with 
$\mu_{s}(\omega^0)$.

\vskip 6pt\noindent
Generally speaking, we expect that $(\tilde{V}(s,X_{s}^{\alpha},\tilde{\chi}_{s}) = 
\tilde{V}(s,X_{s}^{\alpha}(\omega^0,\omega^1),\tilde{\chi}_{s}(\omega^0,\cdot)))_{t \leq s \leq T}$
can be expanded as
\begin{equation}
\label{fo:Ito}
\begin{split}
d \tilde{V}\bigl(s,X_{s}^{\alpha},\tilde{\chi}_{s}\bigr)
&= \bigl[ \partial_{t} \tilde{V}(s,X_{s}^{\alpha},\tilde{\chi}_{s})
  + A_{x}^{\alpha} \tilde{V}(s,X_{s}^{\alpha},\tilde{\chi}_{s})
  + A_{\mu}^{\alpha} \tilde{V}(s,X_{s}^{\alpha},\tilde{\chi}_{s})
\\
&\hspace{15pt}  + A_{x \mu}^{\alpha} \tilde{V}(s,X_{s}^{\alpha},\tilde{\chi}_{s}) \bigr] ds
   + d M_s, \quad t \leq s \leq T,
\end{split}
  \end{equation}
  with $\t V(T,x,\t \chi) = g(x,{\mathcal L}(\t \chi))$ as terminal condition, 
where 
\begin{enumerate}
\item[$(i)$] $A_{x}^{\alpha}$ denotes the second-order differential operator associated to the process 
$(X_{s}^{\alpha})_{t \leq s \leq T}$.
It acts on functions of the state variable $x \in \RR^d$
and thus on the variable $x$ in $\tilde{V}(t,x,\tilde{\chi})$ in \eqref{fo:Ito}.
\item[$(ii)$] $A_{\mu}^{\alpha}$ denotes some second-order differential operator
associated to the process $(\tilde{\chi}_{s})_{t \leq s \leq T}$. It acts on functions from 
$L^2(\tilde{\Omega}^1,\tilde{\mathcal F}^1,\tilde{\PP}^1;\RR^d)$ into $\RR$ 
and thus on the variable $\tilde{\chi}$
in $\tilde{V}(t,x,\tilde{\chi})$.
\item[$(iii)$] $A_{x\mu}^{\alpha}$ denotes some second-order differential operator
associated to the cross effect of 
$(X^\alpha_{s})_{t \leq s \leq T}$
and $(\tilde{\chi}_{s})_{t \leq s \leq T}$, as both feel the same noise $W^0$. It 
acts on functions from 
$\RR^d \times L^2(\tilde{\Omega}^1,\tilde{\mathcal F}^1,\tilde{\PP}^1;\RR^d)$ and thus on the variables $(x,\tilde{\chi})$
in $\tilde{V}(t,x,\tilde{\chi})$.
\item[$(iv)$] $(M_{s})_{t \leq s \leq T}$ is a martingale. 
\end{enumerate}
A proof of \eqref{fo:Ito} is given in the appendix at the end of the paper. Observe that $A_{x\mu} \equiv 0$ if there is no common noise 
$W^0$.
Plugging \eqref{fo:Ito} into \eqref{fo:DPP:weak} and letting $h$ tend to $0$, we then expect:
\begin{equation}
\label{eq:full master PDE:0}
\partial_{t} {\mathbb E} \bigl[ 
\tilde{V}(t,\chi,\tilde{\chi}) \bigr] 
+ 
\inf_{\alpha} {\mathbb E}\bigl[ A_{x}^{\alpha} \tilde{V}(t,\chi,\tilde{\chi}) + A_{\mu}^{\alpha} \tilde{V}(t,\chi,\tilde{\chi})
+ A_{x\mu}^{\alpha} \tilde{V}(t,\chi,\tilde{\chi}) + f(t,\chi,\mu,\alpha) \bigr] = 0,
\end{equation}
where $\chi$ are $\tilde{\chi}$ random variables defined on 
$({\Omega}^1,{\mathcal F}^1,\PP^1)$ and
$(\tilde{\Omega}^1,\tilde{\mathcal F}^1,\t\PP^1)$ respectively,
both being 
distributed according to $\mu$. 
If the minimizer has a feedback form, namely if the optimization over 
random variables $\alpha$ reduces to optimization over
random variables of the form $\hat{\alpha}(t,\chi,\mu)$,  
$\hat{\alpha}$ being a function defined 
on $[0,T] \times \RR^d \times {\mathcal P}_{2}(\RR^d)$, then
the same strategy applied to \eqref{fo:DPP:MKV}, shows that $\tilde{V}$ satisfies the master equation
\begin{equation}
\label{eq:full master PDE}
\begin{split}
&\partial_{t}  
\tilde{V}(t,x,\tilde{\chi}) 
+ 
A_{x}^{\hat{\alpha}(t,x,\mu)} \tilde{V}(t,\chi,\tilde{\chi}) + A_{\mu}^{\hat{\alpha}(t,x,\t \mu)} \tilde{V}(t,\chi,\tilde{\chi})
+ A_{x\mu}^{\hat{\alpha}(t,x,\mu)} \tilde{V}(t,\chi,\tilde{\chi}) 
\\
&\hspace{75pt}+ f\bigl(t,\chi,\mu,\hat{\alpha}(t,x,\mu)\bigr) = 0.
\end{split}
\end{equation}

Of course, the rule for computing the infimum in \eqref{eq:full master PDE:0} depends on the framework. 
In the case of the optimal control of McKean-Vlasov diffusion processes, 
$(\tilde{\chi}_{s}(\omega^0,\tilde{\omega}^1))_{t \leq s \leq T}$
in \eqref{fo:Ito}
is chosen as a copy, denoted by $(\tilde{X}_{s}^{\alpha}(\omega^0,\tilde{\omega}^1))_{t \leq s \leq T}$, 
of $(X_{s}^{\alpha}(\omega^0,\omega^1))_{t \leq s \leq T}$ on the space $(\Omega^0 \times 
\tilde{\Omega}^1,{\mathcal F}^0 \otimes \tilde{\mathcal F}^1, \PP^0 \otimes \tilde{\PP}^1)$.
In that case, $A_{\mu}^{\alpha}$ depends on $\alpha$ explicitly.  
In the framework of mean field games, $(\tilde{\chi}_{s}(\omega^0,\tilde{\omega}^1))_{t \leq s \leq T}$ is chosen as a copy of the optimal path $(\hat{X}_{s})_{t \leq s \leq T}$
of the optimization problem \eqref{fo:value function} under the statistical equilibrium flow initialized at $\mu$
at time $t$. It does not depend on $\alpha$ so that $A_{\mu}^{\alpha}$ does not depend on $\alpha$. Therefore, $A_{\mu}=A_{\mu}^{\alpha}$ has no 
role in the computation of the infimum. 

\vskip 6pt
For the sake of illustration, we specialize the form of \eqref{eq:full master PDE} to a simpler case when 
\eqref{fo:MKV:SDE} reduces to 
$$
dX_s=b(s,X_s,\mu_s,\alpha_s)ds+\sigma(X_s)dW_s +\sigma^0(X_s)dW^0_s. 
$$ 
In that case, we know from the results presented in the appendix that 
\begin{equation}
\label{eq:operators}
\begin{split}
&A_{x}^{\alpha}  \t \varphi(t,x,\t \chi)
=  \langle b\bigl(t,x,{\mathcal L}(\t \chi),\alpha\bigr), \partial_{x} \t \varphi(t,x,\t \chi) \rangle 
\\
&\hspace{15pt}+
\frac12 {\rm Trace} \bigl[ \sigma(x) \bigl( \sigma(x) \bigr)^{\dagger} \partial_{x}^2 \t \varphi(t,x,\t \chi) \bigr]
+ \frac12 {\rm Trace} \bigl[  
\sigma^0 (x) \bigl( \sigma^{0}(x) \bigr)^{\dagger} \partial_{x}^2 \t \varphi(t,x,\t \chi) \bigr], 
\\
&A_{\mu}^{\alpha}  \t \varphi(t,x,\t \chi)
=   b\bigl(t,\t \chi,{\mathcal L}(\t \chi),\t \beta\bigr) \cdot D_{\mu} \t \varphi(t,x,\t \chi) 
\\
&\hspace{15pt}+
\frac12  
D^2_{\mu} \tilde{\varphi}\bigl(t,x,\t \chi\bigr) \bigl[ 
\sigma^0(\t \chi),
\sigma^0(\t \chi)
\bigr] + \frac12
D^2_{\mu} \tilde{\varphi}\bigl(t, x,\t \chi \bigr)  \bigl[ \sigma ( \t \chi) \t G,
\sigma ( \t \chi) \t G
\bigr],
\\
&A_{x\mu}^{\alpha}  \t \varphi(t,x,\t \chi)
= \langle \bigl\{ \partial_{x} D_{\mu}
\tilde{\varphi}\bigl(t, x,\t \chi\bigr) \cdot
\sigma^0(\t \chi) \bigr\},\sigma^0(x)
\bigr\rangle,
\end{split}
\end{equation}
where $\t G$ is an ${\mathcal N}(0,1)$ random variable on the space $(\t \Omega^1,\t {\mathcal F}^1,\t \PP^1)$, independent 
of $\t W$. The notations $D_{\mu}$ and $D^2_{\mu}$ refer to Fr\'echet derivatives of smooth functions on the space 
$L^2(\t \Omega^1,\t {\mathcal F}^1,\t \PP^1;\RR^d)$. 
For a random variable $\t \zeta \in L^2(\t \Omega^1,\t {\mathcal F}^1,\t \PP^1;\RR^d)$, 
the notation $D_{\mu} \t \varphi(t,x,\t \chi) \cdot \t \zeta$ denotes the action of the differential 
of $\t \varphi(t,x,\cdot)$ at point $\t \chi$ along the direction $\t \zeta$. Similarly, the notation 
$D_{\mu}^2 \t \varphi(t,x,\t \chi)  [\t \zeta,\t \zeta]$ denotes the action of the second-order 
differential 
of $\t \varphi(t,x,\cdot)$ at point $\t \chi$ along the directions $(\t \zeta,\t \zeta)$. 
We refer to the appendix for a more detailed account. 

Notice that $\t \chi$ in $A_{\mu}^{\alpha} \t \varphi(t,x,\t \chi)$ denotes the copy of $\chi$, 
$\chi$ standing for the value at time $t$ of the controlled
diffusion process $(\chi_{s})_{t \leq s \leq T}$. 
Specifying the value of $\chi$ according to the framework used 
for performing the optimization, we derive below the precise shape 
of the resulting master equation. Notice also that
 $A_{x\mu}^{\alpha} \t \varphi(t,x,\t \chi)$ does not depend upon $\alpha$ as the coefficients $\sigma^0$
 and $\sigma$ do not depend on it.

\subsection{The Case of Mean Field Games}
\label{sub:mfg}
In the framework of Mean-Field Games, $(\tilde{\chi}_{s})_{t \leq s \leq T}$ is chosen as a copy of the optimal path $(\hat{X}_{s})_{t \leq s \leq T}$. This says that, in \eqref{eq:operators}, 
$\t \chi$ stands for the value at time $t$ of the optimally controlled state from the optimization problem  \eqref{fo:value function} under the statistical equilibrium flow initialized at $\mu$ at time $t$. 
Therefore, the minimization in \eqref{eq:full master PDE:0}
reduces to 
\begin{equation}
\label{eq:minimization:MFG}
\begin{split}
&\inf_{\alpha} {\mathbb E} \bigl[ \langle 
b(t,\chi,\mu,\alpha),\partial_{x} \tilde{V}(t,\chi,\t \chi) \rangle + f(t,\chi,\mu,\alpha) \bigr]
\\
&=\inf_{\alpha} {\mathbb E} \bigl[ \langle 
b(t,\chi,\mu,\alpha),\partial_{x} V(t,\chi,\mu) \rangle + f(t,\chi,\mu,\alpha) \bigr],
\end{split}
\end{equation}
the equality following from the fact that $\partial_{x} \tilde{V}(t,x,\t \chi)$ is the same as 
$\partial_{x} V(t,x,\mu)$ (as the differentiation is performed in the component $x$).

Assume now that there exists a measurable mapping $\bar{\alpha} : [0,T] \times \RR^d 
\times {\mathcal P}_{2}(\RR^d) \times \RR^d \ni (t,x,\mu) \mapsto 
\bar{\alpha}(t,x,\mu,y)$, providing the argument of the minimization: 
\begin{equation}
\label{eq:optimizer}
\bar{\alpha}(t,x,\mu,y) = \textrm{arg}\;\inf_{\alpha \in \RR^d} H(t,x,\mu,y,\alpha) ,
\end{equation}
where the reduced Hamiltonian $H$ is defined as:
\begin{equation}
\label{fo:reduced}
H(t,x,\mu,y,\alpha) = 
 \langle b(t,x,\mu,\alpha), y\rangle
+ f(t,x,\mu,\alpha),
\end{equation}
Then, the minimizer in \eqref{eq:minimization:MFG}
must be $\alpha = \bar{\alpha}(t,\chi,\mu,\partial_{x} V(t,\chi,\mu))$, showing that 
$\hat{\alpha}(t,x,\mu)=
\bar{\alpha}(t,x,\mu,\partial_{x} V(t,x,\mu))$
is an optimal feedback. By
\eqref{eq:full master PDE}, the master equation reads
\begin{equation}
\label{eq:full master PDE:MFG:1}
\begin{split}
&\partial_{t} \tilde{V}(t,x,\tilde{\chi}) + 
\inf_{\alpha}  H\bigl(t,x,\mu,\partial_{x} \tilde{V}(t,x,\tilde{\chi}),\alpha\bigr)
  +  \bigl( A_{\mu} + A_{x \mu}
\bigr) \tilde{V}(t,x,\tilde{\chi})  
\\
 &\hspace{5pt}
+
\frac12 {\rm Trace} \bigl[ \sigma(x) \bigl( \sigma(x) \bigr)^{\dagger} \partial_{x}^2 \t V(t,x,\t \chi) \bigr]
+ \frac12 {\rm Trace} \bigl[  
\sigma^0 (x) \bigl( \sigma^{0}(x) \bigr)^{\dagger} \partial_{x}^2 \t V(t,x,\t \chi) \bigr]=0.
\end{split}
\end{equation}
By identification of the transport term, this says that 
the statistical equilibrium of the MFG with $\mu$ as initial distribution must be given by the 
solution of the conditional McKean-Vlasov equation:
\begin{equation}
\label{eq:MFG:opt}
d \hat{X}_{s} = b\bigl(s,\hat{X}_{s},\hat{\mu}_{s},\bar{\alpha}\bigl(s,\hat{X}_{s},\hat{\mu}_{s},
\partial_{x} V(s,\hat{X}_{s},\hat{\mu}_{s})
 \bigr) + \sigma\bigl( \hat{X}_{s} \bigr) dW_{s} + 
 \sigma^0 \bigl( \hat{X}_{s} \bigr) dW_{s}^0,
 \end{equation}
 subject to the constraint $\hat{\mu}_{s}
 ={\mathcal L}(\hat{X}_{s} \vert {\mathcal F}_{s}^0)$
 for $s \in [t,T]$,
 with $\hat{X}_{t} \sim \mu$.  
 We indeed claim

\begin{proposition}
\label{pr:mfg_decoupling}
On the top of the assumption and notation introduced right above, assume that, for all 
$t \in [0,T]$, $x \in \RR^d$ and 
$\mu \in {\mathcal P}_{2}(\RR^d)$
\begin{equation}
\label{eq:linear growth}
\vert \bar{\alpha}(t,x,\mu,y) \vert \leq C \biggl[ 1+ \vert x \vert + \vert y \vert + 
\biggl( \int_{\RR^d}
\vert x' \vert^2 d \mu(x') \biggr)^{1/2} \biggr],
\end{equation}
and that the growths of the coefficients $b$, $\sigma$ and $\sigma^0$ satisfy a similar bound. 
Assume also that 
$\t V$ is a (classical) 
solution of \eqref{eq:full master PDE:MFG:1} satisfying, for all 
$t \in [0,T]$, $x \in \RR^d$ and 
$\tilde{\chi}
\in L^2(\t \Omega^1,\t {\mathcal F}^1,\t \PP^1;\RR^d)$,
\begin{equation}
\label{eq:integrability}
\vert \partial_{x }\tilde{V}(t,x,\t \chi)
\vert + \Vert D_{\mu}\tilde{V}(t,x,\t \chi)
\Vert_{L^2(\t \Omega^1)}
\leq C \Bigl( 1 + \vert x \vert + \t \EE^1 \bigl[ \vert \t \chi \vert^2 \bigr]^{1/2}
\Bigr), 
\end{equation}
 and that, for any initial condition 
$(t,\mu) \in [0,T] \times {\mathcal P}_{2}(\RR^d)$, Equation \eqref{eq:MFG:opt} has a unique solution. 
Then, the flow $({\mathcal L}(\hat{X}_{s} \vert {\mathcal F}_{s}^0))_{t \leq s \leq T}$
solves the mean field game with $(t,\mu)$ as initial condition.  
\end{proposition}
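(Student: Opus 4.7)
The plan is a verification argument on the enlarged state space, built on the chain rule \eqref{fo:Ito} of the appendix. Let $(\hat{X}_s)_{t\le s\le T}$ be the (by hypothesis unique) solution of \eqref{eq:MFG:opt} with $\hat X_t\sim\mu$, and set $\hat\mu_s:=\mathcal L(\hat X_s\vert \mathcal F_s^0)$. The fixed-point condition $\hat\mu_s=\mathcal L(\hat X_s\vert\mathcal F_s^0)$ holds by construction, so the task reduces to showing that, for every starting point $x\in\RR^d$ and every admissible control $(\alpha_s)_{t\le s\le T}$ with corresponding state $X^\alpha$ solving \eqref{fo:MKV:SDE} under the frozen flow $\mu_s=\hat\mu_s$ and $X_t^\alpha=x$,
\begin{equation*}
V(t,x,\mu) \le \EE\bigg[\int_t^T f(s,X_s^\alpha,\hat\mu_s,\alpha_s)\,ds + g(X_T^\alpha,\hat\mu_T)\,\Big|\,X_t^\alpha=x\bigg],
\end{equation*}
with equality along $\alpha_s=\hat\alpha(s,X_s^\alpha,\hat\mu_s):=\bar\alpha(s,X_s^\alpha,\hat\mu_s,\partial_x V(s,X_s^\alpha,\hat\mu_s))$; uniqueness of \eqref{eq:MFG:opt} then forces the optimal trajectory to coincide with $\hat X$, so that $(\hat\mu_s)$ is indeed an equilibrium for the MFG with initial condition $(t,\mu)$.

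The core step is the expansion of $\tilde V$ along $X^\alpha$. On the product space $(\Omega^0\times\tilde\Omega^1,\mathcal F^0\otimes\tilde{\mathcal F}^1,\PP^0\otimes\tilde\PP^1)$, lift $\hat X$ to an independent copy $(\tilde\chi_s)_{t\le s\le T}$ so that $\mathcal L(\tilde\chi_s(\omega^0,\cdot))=\hat\mu_s(\omega^0)$; applying \eqref{fo:Ito} to $\tilde V(s,X_s^\alpha,\tilde\chi_s)$ yields
\begin{equation*}
d\tilde V(s,X_s^\alpha,\tilde\chi_s) = \bigl[\partial_s\tilde V+A_x^{\alpha_s}\tilde V+A_\mu\tilde V+A_{x\mu}\tilde V\bigr]ds+dM_s,
\end{equation*}
in which $A_\mu$ and $A_{x\mu}$ are driven by the optimal dynamics of $\tilde\chi$ and therefore do not depend on $\alpha_s$, while $A_x^{\alpha_s}\tilde V$ equals $H(s,X_s^\alpha,\hat\mu_s,\partial_x\tilde V,\alpha_s)-f(s,X_s^\alpha,\hat\mu_s,\alpha_s)$ plus the second-order $x$-trace terms appearing in \eqref{eq:full master PDE:MFG:1}. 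Subtracting the master equation cancels those traces together with the $A_\mu+A_{x\mu}$ contribution, leaving the pointwise inequality
\begin{equation*}
d\tilde V(s,X_s^\alpha,\tilde\chi_s)+f(s,X_s^\alpha,\hat\mu_s,\alpha_s)\,ds = \bigl[H(s,X_s^\alpha,\hat\mu_s,\partial_x\tilde V,\alpha_s)-\textstyle\inf_\alpha H\bigr]\,ds+dM_s \ge dM_s,
\end{equation*}
with equality precisely when $\alpha_s=\hat\alpha(s,X_s^\alpha,\hat\mu_s)$. Integrating on $[t,T]$, using $\tilde V(T,\cdot,\tilde\chi_T)=g(\cdot,\hat\mu_T)$, and taking expectations then delivers the required inequality, provided the martingale $M$ has zero expectation.

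This last point is where I expect the main technical obstacle to sit. The stochastic integrand defining $M$ pairs $\partial_x\tilde V$ and $D_\mu\tilde V$ against $\sigma$, $\sigma^0$ evaluated at $(X_s^\alpha,\hat\mu_s)$ and at $\tilde\chi_s$; the linear growth hypotheses \eqref{eq:linear growth}--\eqref{eq:integrability}, combined with the matching growth of $b,\sigma,\sigma^0$ and with standard $L^2$ moment bounds for \eqref{fo:MKV:SDE} and \eqref{eq:MFG:opt}, must be chained to give uniform $L^2$ control of that integrand and upgrade $M$ from a local to a true martingale via Burkholder--Davis--Gundy. Two secondary points of care are that the pointwise infimum in the master equation survives replacement of $\alpha$ by any admissible process $\alpha_s$, and that identifying $\tilde\chi$ as an independent copy of $\hat X$ on the product space genuinely realizes $\hat\mu_s$ as its conditional law given $\mathcal F^0$ so that \eqref{fo:Ito} applies in the form stated in the appendix; both are essentially bookkeeping, but neither can be left implicit given how the drift $A_\mu+A_{x\mu}$ intertwines the two processes.
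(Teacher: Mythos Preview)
Your proposal is correct and follows essentially the same approach as the paper: a verification argument in which one expands $\tilde V(s,X_s^\alpha,\tilde\chi_s)$ via the It\^o formula of the appendix (Proposition~\ref{prop:ito:joint}), uses the master equation \eqref{eq:full master PDE:MFG:1} to reduce the drift to a Hamiltonian gap, and invokes the growth bounds \eqref{eq:linear growth}--\eqref{eq:integrability} to ensure the local martingale is a true martingale. The paper's own proof is terser but structurally identical; your identification of the martingale integrability as the main technical point matches exactly what the paper flags.
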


\begin{proof}
The proof consists of a verification argument. 
First, we notice from \eqref{eq:linear growth} and \eqref{eq:integrability}
that the solution of \eqref{eq:MFG:opt} is square integrable in the sense that 
its supremum norm over $[0,T]$ is square integrable.
Similarly, for any square integrable control $\alpha$, the supremum of 
$X^{\alpha}$ (with $X_{t}^{\alpha} \sim \mu$) is square integrable.
The point is then to go back to \eqref{fo:value function:MFG} and to plug
$\hat{\mu}_{s} = {\mathcal L}(\hat{X}_{s} \vert {\mathcal F}_{s}^0)$ in the right-hand side. 
Replacing $g$ by $V(T,\cdot,\cdot)$ and applying It\^o's formula
in the appendix
(see Proposition \ref{prop:ito:joint}), using the growth and integrability assumptions to guarantee that
the expectation of the martingale part is zero, we deduce that the right-hand side 
is indeed greater than $V(t,x,\mu)$. Choosing 
$(\alpha_{s} = \bar{\alpha}(s,\hat{X}_{s},\hat{\mu}_{s},
\partial_{x} V(s,\hat{X}_{s},\hat{\mu}_{s}))_{t \leq s \leq T}$, equality must hold. This proves that 
$(\hat{X}_{s})_{t \leq s \leq T}$ is a minimization path 
of the optimization problem driven by its own flow of conditional distributions, 
which is precisely the definition of an MFG equilibrium.
\end{proof}

\begin{remark}
Proposition \ref{pr:mfg_decoupling} says that the solution of the master equation \eqref{eq:full master PDE:MFG:1} contains all the information needed to solve the mean field game problem.
In that framework, it is worth mentioning that the flow of conditional distributions 
$(\hat{\mu}_{s}={\mathcal L}(\hat{X}_{s} \vert {\mathcal F}_{s}^0))_{t \leq s \leq T}$
solves the SPDE \eqref{fo:spde}, with $\alpha(s,\cdot,\hat{\mu}_{s})
= \bar{\alpha}(s,x,\hat{\mu}_{s},
\partial_{x} V(s,x,\hat{\mu}_{s}))$. 
Notice finally that
$(Y_{s}=\partial_{x} V(s,\hat{X}_{s},\hat{\mu}_{s}))_{t \leq s \leq T}$
may be reinterpreted as the adjoint process in the stochastic Pontryagin principle
derived for mean field games in \cite{CarmonaDelarue_sicon} (at least when there is no common noise $W^0$).
In that framework, it is worth mentioning that the function 
$(t,x,\mu) \mapsto \partial_{x} V(t,x,\mu)$ reads as the decoupling field of the McKean-Vlasov FBSDE deriving from the stochastic Pontryagin principle. It plays the same role 
as the gradient  
of the value function in standard optimal control theory. See Subsection \ref{subse:viscosity}. 
\end{remark}

\subsection{The Case of the Control of McKean-Vlasov Dynamics}
\label{sub:mkv}
When handling the control of McKean-Vlasov dynamics, $(\tilde{\chi}_{s})_{t \leq s \leq T}$ is chosen as a copy of $(X_{s}^{\alpha})_{t \leq s \leq T}$. This says that, in \eqref{eq:operators}, 
$\t \alpha$ reads as a copy of $\alpha$ so that 
the minimization in \eqref{eq:full master PDE:0}
takes the form
\begin{equation*}
\begin{split}
&\inf_{\alpha} {\mathbb E} \bigl[ \langle 
b(t,\chi,\mu,\alpha),\partial_{x} \tilde{V}(t,\chi,\t \chi) \rangle + 
b(t,\t \chi,\mu,\t \alpha) \cdot D_{\mu} \tilde{V}(t,\chi,\t \chi)
+
f(t,\chi,\mu,\alpha) \bigr]
\\
&=\inf_{\alpha} {\mathbb E}^1 \Bigl[ \langle 
b(t,\chi,\mu,\alpha),\partial_{x} V(t,\chi,\mu) \rangle 
+ \t {\mathbb E}^1 \bigl[ 
\langle b(t,\t \chi,\mu,\t \alpha),\partial_{\mu} V(t,\chi,\mu)(\t \chi) \rangle
\bigr]
+ f(t,\chi,\mu,\alpha) \Bigr],
\end{split}
\end{equation*}
where the function $\partial_{\mu} V(t,x,\mu)(\cdot)$ \emph{represents} the Fr\'echet 
derivative $D_{\mu} \tilde{V}(t,x,\t \chi)$, that is 
$D_{\mu} \t V (t,x,\t \chi) = \partial_{\mu}
V(t,x,\mu)(\t \chi)$. See the appendix at the end of the paper for explanations. 
By Fubini's theorem, the minimization can be reformulated as
\begin{equation}
\label{eq:minimization:MKV}
\inf_{\alpha} {\mathbb E}^1 \Bigl[ \bigl\langle 
b(t,\chi,\mu,\alpha),\partial_{x} V(t,\chi,\mu) + \t \EE^1 \bigl[ 
\partial_{\mu} V(t,\t \chi,\mu)(\chi) 
\bigr]
\bigr\rangle 
+ f(t,\chi,\mu,\alpha) \Bigr].
\end{equation}
The strategy is then the same as in the previous paragraph. Assume indeed
that there exists a measurable mapping $\bar{\alpha} : [0,T] \times \RR^d 
\times {\mathcal P}_{2}(\RR^d) \times \RR^d \ni (t,x,\mu) \mapsto 
\bar{\alpha}(t,x,\mu,y)$ minimizing the reduced Hamiltonian 
as in \eqref{eq:optimizer},
then the minimizer in \eqref{eq:minimization:MKV}
must be 
\begin{equation*}
\begin{split}
\hat\alpha &= \bar{\alpha}\bigl(t,\chi,\mu,\partial_{x} V(t,\chi,\mu)
+ \t \EE^1[\partial_{\mu} V(t,\t \chi,\mu)(\chi)]\bigr)
\\
&= \bar{\alpha}\biggl(t,\chi,\mu,\partial_{x} V(t,\chi,\mu)
+ \int_{\RR^d} \partial_{\mu} V(t,x',\mu)(\chi) d\mu(x') \biggr),
\end{split}
\end{equation*} 
showing that
$\hat{\alpha}(t,x,\mu)=
\bar{\alpha}(t,x,\mu,\partial_{x} V(t,x,\mu)
+ \int_{\RR^d} \partial_{\mu} V(t,x',\mu)(x) d\mu(x'))$
is an optimal feedback. By \eqref{eq:full master PDE}, this permits to make explicit 
the form of the master equation. Notice that the term 
in $\hat{\alpha}$ in \eqref{eq:full master PDE}
does not read as an infimum, namely:
\begin{equation*}
\begin{split}
&\langle
b\bigl(t,x,\mu,\hat{\alpha}(t,x,\mu)\bigr),\partial_{x} V(t,x,\mu) \rangle
+ b\bigl(t,\t \chi,\mu,\hat{\alpha}(t,\t \chi,\mu)\bigr) \cdot D_{\mu} \t V
(t,x,\t \chi) + f\bigl(t,x,\mu,\hat{\alpha}(t,x,\mu)\bigr)
\\
&\not = \inf_{\alpha} \bigl[ b(t,x,\mu,\alpha) \cdot \partial_{x} \tilde{V}(t,x,\tilde{\chi})
+ b(t,x,\mu,\t \alpha\bigr) \cdot D_{\mu} \tilde{V}(t,x,\t \chi) 
+ f(t,x,\mu,\alpha) \bigr].
\end{split}
\end{equation*}
This says that the optimal path solving the optimal control of McKean-Vlasov dynamics must be given by:
\begin{equation}
\label{eq:MKV:opt}
\begin{split}
d \hat{X}_{s} &= b\biggl[s,\hat{X}_{s},\hat{\mu}_{s},\bar{\alpha}\biggl(s,\hat{X}_{s},\hat{\mu}_{s},
\partial_{x} V(s,\hat{X}_{s},\hat{\mu}_{s})
+ \int_{\RR^d}
\partial_{\mu} V(s,x',\hat{\mu}_{s})(\hat{X}_{s}) d \hat{\mu}_{s}(x')
 \biggr)\biggr] dt
 \\
 &\hspace{15pt}+ \sigma\bigl( \hat{X}_{s} \bigr) dW_{s} + 
 \sigma^0 \bigl( \hat{X}_{s} \bigr) dW_{s}^0,
 \end{split}
 \end{equation}
 subject to the constraint $\hat{\mu}_{s}
 ={\mathcal L}(\hat{X}_{s} \vert {\mathcal F}_{s}^0)$
 for $s \in [t,T]$,
 with $\hat{X}_{t} \sim \mu$.  
 We indeed claim

\begin{proposition}
\label{pr:mkv_decoupling}
On the top of the assumptions and notations introduced above, assume that 
$\bar{\alpha}$, $b$, $\sigma$ and $\sigma^0$ satisfy \eqref{eq:linear growth}. 
Assume also $\t V$ is a classical
solution of \eqref{eq:full master PDE} satisfying, for all 
$t \in [0,T]$, $x \in \RR^d$ and 
$\tilde{\chi}
\in L^2(\t \Omega^1,\t {\mathcal F}^1,\t \PP^1;\RR^d)$,
\begin{equation}
\label{eq:MKV:integrability}
\vert \partial_{x }\tilde{V}(t,x,\t \chi)
\vert + \Vert D_{\mu}\tilde{V}(t,x,\t \chi)
\Vert_{2,\t \Omega^1}
\leq C \Bigl( 1 + \vert x \vert + \t \EE^1 \bigl[ \vert \t \chi \vert^2 \bigr]^{1/2}
\Bigr), 
\end{equation}
 and that, for any initial condition 
$(t,\mu) \in [0,T] \times {\mathcal P}_{2}(\RR^d)$, Equation \eqref{eq:MKV:opt} has a unique solution.
Then, the flow $({\mathcal L}(\hat{X}_{s} \vert {\mathcal F}_{s}^0))_{t \leq s \leq T}$
solves the minimization problem \eqref{fo:cost}, set over controlled McKean-Vlasov dynamics.
\end{proposition}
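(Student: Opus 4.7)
The plan is to adapt the verification argument used in Proposition \ref{pr:mfg_decoupling} to the McKean-Vlasov setting. The main structural difference is that the master equation \eqref{eq:full master PDE} does not carry an infimum inside its drift (as stressed in the text just above \eqref{eq:MKV:opt}), so the comparison inequality has to be extracted \emph{after} taking expectation, through a Fubini exchange in the $A_\mu$-differential term and the pointwise optimality \eqref{eq:optimizer} of $\bar\alpha$. Specifically, I would apply the chain rule of Proposition \ref{prop:ito:joint} to $V(s,X^\alpha_s,\mu^\alpha_s)$ along an admissible conditional McKean-Vlasov path started from $(t,\chi,\mu)$, take expectation, and compare with \eqref{eq:MKV:opt:mean}.

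First, the growth bound \eqref{eq:linear growth} on $\bar\alpha,b,\sigma,\sigma^0$ together with standard McKean-Vlasov estimates give $\EE[\sup_{t\le s\le T}|X^\alpha_s|^2]<\infty$ for any admissible $\alpha$ with $\chi\in L^2$; combined with \eqref{eq:MKV:integrability}, this makes the It\^o remainder in the chain rule a true martingale. Substituting the master equation into the drift, and noting that $A_{x\mu}$ does not depend on the control, one gets
\begin{equation*}
dV(s,X^\alpha_s,\mu^\alpha_s)=\bigl\{-f\bigl(s,X^\alpha_s,\mu^\alpha_s,\hat\alpha(s,X^\alpha_s,\mu^\alpha_s)\bigr)+(A_x^{\alpha_s}-A_x^{\hat\alpha})V+(A_\mu^{\alpha_s}-A_\mu^{\hat\alpha})V\bigr\}ds+dM_s.
\end{equation*}
Taking expectation and swapping $\EE$ with $\tilde\EE$ in the $A_\mu$-difference, as in the derivation of \eqref{eq:minimization:MKV}, collects the $x$- and $\mu$-sensitivities into the extended adjoint
\begin{equation*}
\Phi(s,x,\mu):=\partial_x V(s,x,\mu)+\int_{\RR^d}\partial_\mu V(s,x',\mu)(x)\,d\mu(x'),
\end{equation*}
and rewrites the integrand of the expected drift, after collecting the $f(\hat\alpha)$ terms, as $\int[H(s,x,\mu^\alpha_s,\Phi,\alpha_s)-H(s,x,\mu^\alpha_s,\Phi,\hat\alpha)-f(s,x,\mu^\alpha_s,\alpha_s)]\,d\mu^\alpha_s(x)$, where $H$ is the reduced Hamiltonian \eqref{fo:reduced}. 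Since by \eqref{eq:optimizer} the feedback $\hat\alpha(s,x,\mu)=\bar\alpha(s,x,\mu,\Phi(s,x,\mu))$ is the pointwise minimizer of $H(s,x,\mu,\Phi,\cdot)$, the first two terms of the integrand are nonnegative; integrating from $t$ to $T$ and using the terminal condition $V(T,\cdot,\cdot)=g$ yields
\begin{equation*}
\EE[V(t,\chi,\mu)]\leq \EE\int_t^T f(s,X^\alpha_s,\mu^\alpha_s,\alpha_s)\,ds+\EE[g(X^\alpha_T,\mu^\alpha_T)],
\end{equation*}
with equality when $\alpha_s=\hat\alpha(s,\hat X_s,\hat\mu_s)$. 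By \eqref{eq:MKV:opt:mean}, this is exactly the claimed optimality of $(\hat X_s)_{t\le s\le T}$.

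The main obstacle I anticipate is the rigorous handling of the Fubini exchange at the level of the Fr\'echet-derivative representation $D_\mu V=\partial_\mu V(\cdot)$: one has to check that $\partial_\mu V$ is jointly measurable in $(x,x')$ and integrable against $d\mu(x)\,d\mu(x')$, which is exactly what \eqref{eq:MKV:integrability} provides quantitatively; the treatment of the second-order cross terms is handled inside Proposition \ref{prop:ito:joint} itself. A secondary technical point is ensuring that the stochastic integral $M$ in the chain rule is a true martingale rather than a local one, which again reduces to combining \eqref{eq:MKV:integrability} with the $L^2$-estimate on $\sup_{s\in[t,T]}|X^\alpha_s|$ and its copy $|\tilde X^\alpha_s|$ on the product space.
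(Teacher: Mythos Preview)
Your proposal is correct and follows essentially the same route as the paper: a verification argument applying the chain rule of Proposition \ref{prop:ito:joint} to $V(s,X^\alpha_s,\mu^\alpha_s)$, using the growth bounds \eqref{eq:linear growth} and \eqref{eq:MKV:integrability} to justify square integrability and the true-martingale property, then invoking the Fubini exchange of \eqref{eq:minimization:MKV} to recombine the $\partial_x V$ and $\partial_\mu V$ terms into the extended adjoint and conclude via the pointwise optimality \eqref{eq:optimizer} of $\bar\alpha$. Your write-up is in fact more explicit than the paper's own proof, which merely sketches these steps and refers back to \eqref{eq:MKV:opt:mean} and \eqref{eq:minimization:MKV}.
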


\begin{proof}
The proof consists again of a verification argument. 
As for mean field games, 
we notice from \eqref{eq:linear growth} and \eqref{eq:MKV:integrability}
that the supremum (over $[0,T]$) of the solution of \eqref{eq:MKV:opt} is square integrable
and that, for any (square integrable) control $\alpha$, the supremum of 
$X^{\alpha}$ (with $X_{t}^{\alpha} \sim \mu$) is also square integrable.
The point is then to go back to \eqref{eq:MKV:opt:mean}.
Replacing $g$ by $V(T,\cdot,\cdot)$ and applying It\^o's formula
in the appendix
(see Proposition \ref{prop:ito:joint}) (taking benefit 
of the integrability condition \eqref{eq:integrability} for canceling 
the expectation of the martingale part) and using 
the same Fubini argument as in \eqref{eq:minimization:MKV}, 
we deduce that the right-hand side 
is indeed greater than $V(t,x,\mu)$.
Choosing
$\alpha_{s}=\hat{\alpha}(s,\hat{X}_{s},\hat{\mu}_{s})$, 
with $\hat{\alpha}(t,x,\mu)=
\bar{\alpha}(t,x,\mu,\partial_{x} V(t,x,\mu)
+ \int_{\RR^d} \partial_{\mu} V(t,x',\mu)(x) d\mu(x'))$, equality must hold.
\end{proof}

\begin{remark}
The flow of conditional distributions 
$(\hat{\mu}_{s}={\mathcal L}(\hat{X}_{s} \vert {\mathcal F}_{s}^0))_{t \leq s \leq T}$
solves an SPDE, on the same model as \eqref{fo:spde}. The formulation of that SPDE is left to the reader.  

Notice finally that
$(\partial_{x} V(s,\hat{X}_{s},\hat{\mu}_{s})
+ \int_{\RR^d}
\partial_{\mu} V(s,x,\hat{\mu}_{s})(\hat{X}_{s}) d\hat{\mu}_{s}(x)
)_{t \leq s \leq T}$
may be reinterpreted as the adjoint process in the stochastic Pontryagin principle
derived for the control of McKean-Vlasov dynamics in \cite{CarmonaDelarue_ap} (at least when there is no common noise $W^0$).
In particular, the function $(t,x,\mu) \mapsto 
\partial_{x} V(s,x,\mu)
+ \int_{\RR^d}
\partial_{\mu} V(s,x,\mu)(x) d\mu(x)$ reads as the decoupling field 
of the McKean-Vlasov FBSDE deriving from the stochastic Pontryagin principle
for the control of McKean-Vlasov dynamics. 
It is interesting to notice that the fact that the formula contains two different terms is a perfect reflection of the backward propagation of the terminal condition of the FBSDE. Indeed, as seen in \cite{CarmonaDelarue_ap}, this terminal condition has two terms corresponding to the partial derivatives of the terminal cost function $g$ with respect to the state variable $x$ and the distribution $\mu$. See Subsection \ref{subse:viscosity}.
\end{remark}
 
\subsection{Viscosity Solutions}
\label{subse:viscosity}
In the previous paragraph, we used the master equation within the context of a verification argument to identify optimal paths of the underlying optimal control problem, and we alluded to the connection with purely probabilistic methods derived from the stochastic Pontryagin principle. The stochastic Pontryagin principle works as follows: under suitable conditions, optimal paths are identified with the forward component of a McKean-Vlasov FBSDE. In that framework, our discussion permits to identify the gradient of the function $V$ with the decoupling field of the FBSDE. 
This FBSDE has the form:
\begin{equation}
\label{fo:fbsde}
\begin{cases}
& dX_s=b(s,X_s,\mu_s,\hat\alpha(s,X_s,\mu_s,Y_s))ds +\sigma^0(X_s)dW^0_s +\sigma(X_s)dW_s,
\\
& dY_s=- \Psi\bigl(s,X_s,\mu_s,\hat\alpha(s,X_s,\mu_s,Y_s)\bigr) ds+Z^0_s dW^0_s+Z_sdW_s, \qquad Y_T= \phi(X_T,\mu_T)
\end{cases}
\end{equation}
for some functions $(t,x,\nu,\alpha)\mapsto \Psi(t,x,\nu,\alpha)$
and $(x,\mu) \mapsto \phi(x,\mu)$, the McKean-Vlasov nature of the FBSDE being due to the constraints $\mu_s=\cL(X_s|\cF^0_s)$ and $\nu_s=\cL((X_s,Y_s)|\cF^0_s)$.

\vskip 2pt
In the mean field game case, the stochastic Pontryagin principle takes the form
\begin{equation}
\label{fo:MFGpsi}
\Psi(t,x,\nu,\alpha)=\partial_x H\bigl(t,x,\mu,y,\alpha\bigr),
\quad \phi(x,\mu) = \partial_{x} g(x,\mu), 
\end{equation}
where $\mu$ denotes the first marginal of $\nu$, and 
\begin{equation}
\label{fo:MKVpsi}
\begin{split}
&\Psi(t,x,\nu,\alpha)= \partial_xH\bigl(t,x,\mu,y,\alpha\bigr)
+ 
\int_{\RR^d\times\RR^d}\bigl( \partial_\mu H\bigl(t,x',\mu,y',\alpha'\bigr)(x) \bigr)_{\vert \alpha'=\hat\alpha(t,x',\mu,y')} \nu(dx',dy'),
\\
&\phi(x,\mu) = \partial_{x} g(x,\mu) + \int_{\RR^d}\partial_\mu g(x',\mu)(x)\mu(dx')
\end{split}
\end{equation}
in the case of the control of McKean-Vlasov dynamics.

\vskip 6pt
One may wonder if a converse to the strategy discussed previously is possible: how could we reconstruct a solution of the master equation from a purely probabilistic approach? Put it differently, given the solution of the McKean-Vlasov FBSDE characterizing the optimal path \textit{via} the stochastic Pontryagin principle, is it possible to reconstruct $V$ and to prove that it satisfies a PDE or SPDE which we could identify to the \emph{master equation}? 

In the forthcoming paper \cite{ChassagneuxCrisanDelarue}, the authors investigate the differentiability of the flow of a McKean-Vlasov FBSDE and manage, in some cases, to reconstruct $V$ as a classical solution of the master equation. 

A more direct approach consists in constructing $V$ as a viscosity solution of the master equation. This direct approach was used in \cite{Cardaliaguet} for non-stochastic games. In all cases the fundamental argument relies on a suitable form of the dynamic programming principle. This was our motivation for the discussion in Subsection
\ref{subse:DPP}. Still we must remember that Subsection \ref{subse:DPP} remains mostly at the heuristic level, and that a complete proof of the dynamic programming principle in this context would require more work. This is where the stochastic Pontryagin principle may help. If uniqueness of the optimal paths and of the equilibrium are known (see for instance \cite{CarmonaDelarue_sicon} and \cite{CarmonaDelarue_ap}), then the definition of 
$V$ in \eqref{fo:value function} makes sense. In this case, not only do we have the explicit form the optimal paths, but the dynamic programming principle is expected to hold. 

We refrain from going into the gory details in this review paper. Instead, we take the dynamic programming principle for granted. The question is then to derive the master equation for $V$ in the viscosity sense, from the three possible versions 
\eqref{fo:DPP:MFG}, \eqref{fo:DPP:MKV} and \eqref{fo:DPP:weak}.
 In the present context, since differentiability with respect to one of the variables is done through a lifting of the functions, we will be using the following definition of viscosity solutions.
 
 \begin{definition}
 \label{def:viscosity}
 We say that $V$ is a super-solution (resp. sub-solution) in the sense of viscosity of the master equation if whenever $(t,x,\mu)\in [0,T]\times\RR^d\times\cP_2(\RR^d)$ and the function $[0,T]\times\RR^d\times\cP_2(\RR^d)\ni(s,y,\nu)\mapsto \varphi(s,y,\nu)$ is continuously differentiable, once in the time variable $s$, and twice in the variables $y$ and $\nu$, satisfies  $V(t,x,\mu)=\varphi(t,x,\mu)$ and $V(s,y,\nu)\ge\varphi(s,y,\nu)$ for all $(s,y,\nu)$ then we have \eqref{eq:full master PDE:0}
and/or  \eqref{eq:full master PDE}, with $\tilde{V}$ replaced by $\tilde{\varphi}$
and $=0$ replaced by $\leq 0$ (respectively by $\geq 0$). 
\end{definition}
The reason why we say \emph{and/or} might look rather strange. This will be explained below, the problem being actually more subtle than it seems at first. 

\vskip 4pt
Following the approach used in standard stochastic optimal control problems, 
the proof could consist in applying It\^o's formula to $\t \varphi(s,X^{\hat{\alpha}}_s,\hat{\mu}_s)_{t \leq s \leq t+h}$. In fact, there is no difficulty in proving 
the \emph{viscosity inequality} \eqref{eq:full master PDE} by means of \eqref{fo:DPP:MKV}. Still, this result is rather useless as the optimizer 
$\hat{\alpha}$ is expected to depend upon the gradient of $\t V$ and much more, as $\hat{\alpha}$
reads as $\bar{\alpha}$ applied to the gradient of $\t V$. 
The question is thus to decide whether it makes sense to replace the gradient of $\t V$
in $\bar{\alpha}$ by the gradient of $\t \varphi$. To answer the question, we must distinguish the two problems: 
\vskip3pt

1. In the framework of mean field games, the answer is yes. The reason is that, when 
$V$ is smooth, the inequality $V \geq \varphi$ in the neighborhood of $(t,x,\mu)$
implies $\partial_{x}V(t,x,\mu) = 
\partial_{x}\varphi(t,x,\mu)$. This says that we expect $\t \varphi$ to satisfy 
\eqref{eq:full master PDE:MFG:1} with $=0$ replaced by $\leq 0$. Actually, this can be checked rigorously by means of the stronger version \eqref{fo:DPP:MFG} of the dynamic programming principle, following the proof in \cite{FlemingSoner}. 
\vskip3pt

2. Unfortunately, this is false when handling the control of McKean-Vlasov dynamics. Indeed, the gradient of $V$ is then understood as
$\partial_{x} V(t,x,\mu) + \int_{\RR^d} \partial_{\mu} V(t,x',\mu)(x) d\mu(x')$, which is 
`non-local' in the sense that it involves values of $V(t,x',\mu)$ for $x'$ far away from $x$. 
In particular, there is no way one can replace 
$\partial_{x} V(t,x,\mu) + \int_{\RR^d} \partial_{\mu} V(t,x',\mu)(x) d\mu(x')$
by $\partial_{x} \varphi(t,x,\mu) + \int_{\RR^d} \partial_{\mu} \varphi(t,x',\mu)(x) d\mu(x')$
on the single basis of the comparison of $\varphi$ and $V$. 
This implies that, in the optimal control of McKean-Vlasov dynamics, viscosity solutions must be discussed in the framework of \eqref{eq:full master PDE:0}. Obviously, this requires adapting the notion of viscosity solution as only the function $(t,\mu) \mapsto 
\int_{\RR^d} V(t,x,\mu) d\mu(x)$ matters in the dynamic programming principle
\eqref{fo:DPP:weak}. Comparison is then done with test functions of the form
$(t,\mu) \mapsto \int_{\RR^d} \phi(t,x,\mu) d\mu(x)$ (or simply $\phi(t,\mu)$). 
The derivation of an inequality in \eqref{eq:full master PDE:0} is then achieved by a new application of 
It\^o's formula.

\subsection{Comparison of the Two Master Equations}

We repeatedly reminded the reader that the function $V$ obtained in the case of
mean field games (whether or not there is a common noise) \emph{is not a value function} in the usual sense of optimal control. Indeed, solving a mean field game problem is finding a fixed point problem more than solving an optimization problem. For this reason, the master equation should not read (and should not be interpreted) as a Hamilton-Jacobi-Bellman equation. Indeed, 
even though the first terms in Equation \eqref{eq:full master PDE:MFG:1} 
are of Hamiltonian type, the extra term $A_{\mu}$ (specifically the first order term in $A_{\mu}$) shows that this equation is not an HJB equation.
On the other hand, the previous subsection shows that the
 master equation for the control of McKean-Vlasov dynamics, which comes from an optimization problem, can be viewed as an HJB equation when put in the form 
\eqref{eq:full master PDE:0}. In that case, the solution reads as the value function $(t,\mu) \mapsto 
\int_{\RR^d} V(t,x,\mu) d\mu(x)$ of the corresponding optimization problem. 

\section{A Second Example: A Simple Growth Model}
\label{se:2ndexample}

The following growth model was introduced and studied in \cite{GueantLasryLions.pplnm}.
We review its main features by recasting it in the framework of the present discussion of the master equation of mean field games 
with common noise.
In fact the common noise $W^0$ is the only noise of the model since $\sigma\equiv 0$ and the idiosyncratic noises do not appear.
\subsection{Background}
As it is the case in many economic models, the problem in \cite{GueantLasryLions.pplnm} is set for an infinite time horizon ($T=\infty$) with a positive discount rate $r>0$.
As we just said, $\sigma\equiv 0$.  Moreover, the common noise is a one dimensional Wiener process $(W^0_t)_{ t\ge 0}$. As before, we denote by $\FF^0=(\cF^0_t)_{t\ge 0}$ its filtration. We also assume that its volatility is linear, that is $\sigma^0(x)=\sigma x$ for some positive constant $\sigma$, and that each player controls the drift of its state so that $b(t,x,\mu,\alpha)=\alpha$. In other words, the dynamics of the state of player $i$ read:
\begin{equation}
\label{SDE:pareto}
dX^i_t=\alpha^i_t dt+\sigma X^i_t dW_t^0.
\end{equation}
We shall restrict ourselves to Markovian controls of the form $\alpha^i_t=\alpha(t,X^i_t)$ for a deterministic function 
$(t,x)\mapsto \alpha(t,x)$, which will be assumed non-negative and Lipschitz in the variable $x$. Under these conditions, 
for any player, say player $1$, $X^1_{t} \geq 0$ at all times $t>0$ if $X^1_{0} \geq 0$ and
for any two players, say players $1$ and $2$, the homeomorphism property of Lipschitz SDEs implies that
$X^1_t\le X^2_t$ at all times $t>0$ if $X^1_0\le X^2_0$.

 Note that in the particular case
\begin{equation}
\label{fo:alpha_linear}
\alpha(t,x)=\gamma x
\end{equation}
for some $\gamma>0$, then
\begin{equation}
\label{fo:diff}
X^2_t=X^1_t+(X^2_0-X^1_0)e^{(\gamma - \sigma^2/2) t+\sigma W_t^0}.
\end{equation}
We assume that $k>0$ is a fixed parameter and we introduce a special notation for the family of scaled Pareto distributions with decay parameter $k$.
For any real number $q\ge 1$, we denote by $\mu^{(q)}$ the Pareto distribution:
\begin{equation}
\label{fo:Pareto}
\mu^{(q)}(dx)=k\frac{q^k}{x^{k+1}}\bone_{[q,\infty)}(x)dx.
\end{equation}
Notice that $X\sim \mu^{(1)}$ is equivalent to $qX\sim\mu^{(q)}$. We shall use the notation $\mu_t$ for the conditional distribution of the state $X_t$ of a generic player
at time $t\ge 0$ conditioned by the knowledge of the past up to time $t$ as given by $\cF^0_t$. 
Under the prescription \eqref{fo:alpha_linear}, 
we claim that, if $\mu_0=\mu^{(1)}$, then $\mu_t=\mu^{(q_t)}$ 
where $q_t=e^{(\gamma - \sigma^2/2) t+\sigma W_t^0}$. In other words, conditioned on the history of the common noise, the distribution of the states of the players remains Pareto with parameter $k$ if it started that way, and the left-hand point of the distribution $q_t$ can be understood as a sufficient statistic characterizing the distribution $\mu_t$. 
This remark is an immediate consequence of formula \eqref{fo:diff} applied to $X^1_t=q_t$, in which case $q_0=1$, and $X^2_t=X_t$, implying that $X_t=X_0 q_t$.
So if $X_0\sim\mu^{(1)}$, then $\mu_t\sim\mu^{(q_t)}$. In particular, we have an explicit solution of the conditional Kolmogorov equation in the case of the particular linear
feedback controls.

\subsection{Optimization Problem}
We now introduce the cost functions and define the optimization problem.
 We first assume that the problem is set for a finite horizon $T$. For the sake of convenience, we skip the stage of the $N$ player game for $N$ finite, and discuss directly the limiting MFG problem in order to avoid dealing with the fact that empirical measures do not have densities.
 The shape of the terminal cost $g$ will be specified later on. Using the same notation as in \cite{GueantLasryLions.pplnm}, we define the running cost function $f$ by
$$
f(x,\mu,\alpha)=  c \frac{x^a}{[(d\mu/dx)(x)]^b}-\frac{E}{p}\frac{\alpha^p}{[\mu([x,\infty))]^b},
$$
for some positive constants $a$, $b$, $c$, $E$ and $p >1$ whose economic meanings are discussed in \cite{GueantLasryLions.pplnm}. We use the convention that the density is the density of the absolutely continuous part of the Lebesgue's decomposition of the measure $\mu$, and that in the above sum, the first term is set to $0$ when this density is not defined or is itself $0$. The extended Hamiltonian 
of the system (see \eqref{eq:optimizer}) reads
$$
H(x,y,\mu,\alpha)=\alpha y + c \frac{x^a}{[(d\mu/dx)(x)]^b} - \frac{E}{p}\frac{\alpha^p}{[\mu([x,\infty))]^b}
$$
and the value $\bar\alpha$ of $\alpha$ minimizing $H$ is given by (for $y \geq 0$):
\begin{equation}
\label{fo:hat_alpha}
\bar\alpha=\bar\alpha(x,\mu,y)=\bigg(  \frac{y}E \bigl[ \mu([x,\infty)) \bigr]^b\bigg)^{1/(p-1)}
\end{equation}
so that:
\begin{equation*}
\begin{split}
H(x,y,\mu,\bar\alpha)&= \bigg( \frac{y}E \bigl[ \mu([x,\infty)) \bigr]^b\bigg)^{1/(p-1)}y + c \frac{x^a}{
[(d\mu/dx)(x)]^b}
\\
&\hspace{15pt}
-\frac{E}{p}\frac{\Big( (y/E) [\mu([x,\infty))]^b\Big)^{p/(p-1)}}{[\mu([x,\infty))]^b}
\\
&=\frac{p-1}{p} E^{-1/(p-1)}y^{p/(p-1)} \bigl[ \mu([x,\infty)) \bigr]^{b/(p-1)} + c 
\frac{x^a}{[(d\mu/dx)(x)]^b}.
\end{split}
\end{equation*}
In the particular case of linear controls \eqref{fo:alpha_linear}, using the explicit formula \eqref{fo:Pareto} for the density of $\mu^{(q)}$ and the fact that
$$
\mu^{(q)}([x,\infty))=1\wedge \frac{q^k}{x^k},
$$
we get
\begin{equation*}
\begin{split}
f\bigl(x,\mu^{(q)},\alpha\bigr)&= c \frac{x^a}{(k q^k / x^{k+1})^b} 
{\mathbf 1}_{\{x \geq q\}}
- \frac{E}{p}\frac{\alpha^p}{1\wedge (q^{kb}/x^{kb})}
\\
&=
\frac{c}{k^bq^{kb}}x^{a+b(k+1)}
{\mathbf 1}_{\{x \geq q\}}
-\frac{E}{pq^{kb}}\alpha^p \bigl( x^{kb} \vee q^{kb} \bigr),
\end{split}
\end{equation*}
and
\begin{equation}
\label{fo:hat_alpha'}
\bar\alpha(x,\mu,y)=\bigg[\frac{y}E \Bigl( \frac{q^{kb}}{x^{kb}} \wedge 1 \Bigr)\bigg]^{1/(p-1)},
\end{equation}
so that
$$
H(x,y,\mu^{(q)},\bar\alpha)=\frac{p-1}{p} E^{-1/(p-1)} y^{p/(p-1)} \
\Bigl( \frac{q^{kb/(p-1)}}{x^{kb/(p-1)}} \wedge 1 \Bigr) 
+ c \frac{x^{a+(k+1)b}}{k^bq^{kb}} {\mathbf 1}_{\{x \geq q\}}.
$$
\subsection{Search for a Pareto Equilibrium}
Assuming that the initial distribution of the values of the state is given by the Pareto
distribution $\mu^{(1)}$, we now restrict ourselves in searching for equilibriums with Pareto distributions, which means that 
the description of the equilibrium flow of measures $(\hat{\mu}_{t})_{0 \leq t \leq T}$ can be reduced to the 
description of the flow of corresponding Pareto parameters $(\hat{q}_{t})_{0 \leq t \leq T}$. 
Introducing the letter $V$ for denoting the solution of the master equation, we know from 
\eqref{eq:full master PDE:MFG:1} 
and Proposition \ref{pr:mfg_decoupling} that the optimal feedback control must read
\begin{equation*}
\hat{\alpha}(t,x) = \bar{\alpha} \bigl(x,\hat{\mu}_{t},\partial_{x} V(t,x,\hat{\mu}_{t})\bigr)
= \bigg[\frac{\partial_{x} V(t,x,\hat{\mu}_{t})}E 
\Bigl( \frac{\hat{q}_{t}^{kb}}{x^{kb}} \wedge 1 \Bigr)\bigg]^{1/(p-1)}.
\end{equation*}
In order to guarantee that the equilibrium flow of measures is of Pareto type, it must satisfy the condition:
\begin{equation}
\label{Cond:Pareto}
\gamma x = \bigg(\frac{\partial_{x} V(t,x,\hat{\mu}_{t})}E \frac{\hat{q}_{t}^{kb}}{x^{kb}}\bigg)^{1/(p-1)},
\quad x \geq \hat{q}_{t}. 
\end{equation}
for some $\gamma >0$. There is no need for checking the condition for $x < \hat{q}_{t}$ as the
path driven by the Pareto distribution is then always greater than or equal to $(\hat{q}_{t})_{t \geq 0}$. 

Since we focus on equilibriums of Pareto type, we compute the function $V$ at distributions of Pareto type only. It then 
makes sense to 
\textit{parameterize} the problem and to seek for $V$ in the factorized form:
$$
\cV(t,x,q)=V(t,x,\mu^{(q)}),
$$
for some function $\cV : (t,x,q)\in [0,T]\times \RR \times \RR \rightarrow \RR$. 
Then, the relationship \eqref{Cond:Pareto} takes the form:
\begin{equation*}
\gamma x = \bigg(\frac{\partial_{x} \cV(t,x,q)}E \frac{q^{kb}}{x^{kb}}\bigg)^{1/(p-1)}, \quad x \geq q. 
\end{equation*} 

The point is then to write the equation satisfied by $\cV$, namely the equivalent of 
\eqref{eq:full master PDE:MFG:1} but satisfied by $\cV$ instead of $V$. 
First, we observe that, in \eqref{eq:full master PDE:MFG:1}, $\sigma(x) \equiv  0$. 
Obviously, the difficult point is to rewrite $A_{\mu}$ and $A_{x\mu}$
as differential operators acting on the variables $q$ and $(x,q)$ respectively. 

A natural solution is to redo the computations used for deriving \eqref{eq:full master PDE:MFG:1} 
by replacing It\^o's formula
for the measures $(\hat{\mu}_{t})_{0 \leq t \leq T}$ 
by It\^o's formula for $(\hat{q}_{t})_{0 \leq t \leq T}$, taking benefit that 
$(\hat{q}_{t})_{0 \leq t \leq T}$ solves the SDE
\begin{equation}
\label{SDE:q}
d\hat{q}_t=\gamma \hat{q}_t dt + \sigma \hat{q}_t dW_t,
\end{equation}
which is a consequence of \eqref{fo:alpha_linear}
and \eqref{fo:diff}. 
Then the term $A_{\mu} \tilde{V}$ in \eqref{eq:full master PDE:MFG:1}, which reads
as the It\^o expansion of $V$ along $(\hat{\mu}_{t})_{0 \leq t \leq T}$, turns into 
the second-order differential operator associated to the SDE satisfied by $\hat{q}_{t}$, namely
\begin{equation*}
A_{q} \cV(t,x,q) = \gamma q \partial_{q} \cV(t,x,q) + \frac{1}{2} \sigma^2 q^2 \partial_{q}^2 \cV(t,x,q).
\end{equation*}
Similarly, the term $A_{x\mu} \tilde{V}$ in \eqref{eq:full master PDE:MFG:1}, which reads
as the bracket of the components in $\RR^d$ and in ${\mathcal P}_{2}(\RR^d)$ 
in the It\^o expansion, turns into 
the second-order differential operator associated to bracket of the SDEs
satisfied by $(X_{t})_{0 \leq t \leq T}$ in \eqref{SDE:pareto} and by $(\hat{q}_{t})_{0 \leq t \leq T}$, namely
\begin{equation*}
A_{xq} \cV(t,x,q) = \sigma^2 xq \partial_{xq}^2\cV(t,x,q).
\end{equation*}
Rewriting \eqref{eq:full master PDE:MFG:1}, we get 
\begin{equation}
\label{eq:full master PDE:pareto:1}
\begin{split}
&\partial_{t} \cV(t,x,q) + 
\frac{p-1}{p} E^{-1/(p-1)} \bigl( \partial_{x} \cV(t,x,q) \bigr)^{p/(p-1)} 
\Bigl( \frac{q^{kb/(p-1)}}{x^{kb/(p-1)}} \wedge 1 \Bigr) 
+ c \frac{x^{a+(k+1)b}}{k^bq^{kb}} {\mathbf 1}_{\{x \geq q\}}
\\
 &\hspace{5pt}
 +
\gamma q \partial_{q} \cV(t,x,q) + 
\frac12 \sigma^2 \bigl[ x^2 \partial_{x}^2 \cV(t,x,q) + q^2 
\partial_{q}^2 \cV(t,x,q) + 2 xq \partial_{xv}^2 \cV(t,x,q) \bigr]=0.
\end{split}
\end{equation}
Now we look for a constant $B >0$ such that
\begin{equation}
\label{eq:V:candidat}
\cV(t,x,q) = \cV(x,q) =B \frac{x^{p+bk}}{q^{bk}},
\end{equation}
solves the parameterized master equation \eqref{eq:full master PDE:pareto:1} on the set
$\{x \geq q\}$. Under the additional condition that $a+b=p$, $B$ must be the solution of the equation
\begin{equation*}
\frac{p-1}{p} E^{-1/(p-1)} \bigl( B (p+bk)
\bigr)^{p/(p-1)} +  \frac{c}{k^b} 
- \gamma B bk + 
\frac{\sigma^2}{2} B p(p-1)=0.
\end{equation*}
The condition \eqref{Cond:Pareto} reads
\begin{equation*}
\gamma = \Bigl( \frac{B(p+bk)}{E} \Bigr)^{1/(p-1)},
\end{equation*}
so that the above equation for $B$ becomes
\begin{equation*}
(p+bk)^{1/(p-1)} E^{-1/(p-1)}
\bigl( p-1 - \frac{bk}{p} \bigr)
B^{p/(p-1)} + \frac{\sigma^2}{2} p(p-1) B + \frac{c}{k^b}=0.
\end{equation*}
which always admits a solution if $p(p-1)<bk$.
The fact that 
\eqref{eq:full master PDE:pareto:1} is satisfied for $x \geq q$
is enough to prove that 
\begin{equation*}
\biggl( 
\cV(\hat{X}_{t},\hat{q}_{t})
+ \int_{0}^t f\bigl(\hat{X}_{s},\hat{\mu}_{s},\gamma \hat{X}_{s}
\bigr) ds 
\biggr)_{0 \leq t \leq T}, \quad \textrm{with} \ \hat{\mu}_{s} = \mu^{(\hat{q}_{s})}
\ {\rm for}\quad s \in [0,T],  
\end{equation*}
is a martingale, whenever
\begin{equation*}
d \hat{X}_{t} = \gamma \hat{X}_{t} dt + \sigma \hat{X}_{t}
dW_{t}^0, \quad t \in [0,T],
\end{equation*}
with $\hat{X}_{0} \sim \mu^{\hat{q}_{0}}$, and $(\hat{q}_{t})_{0 \leq t \leq T}$
also solves \eqref{SDE:q}. The reason is that $\hat{X}_{t} > \hat{q}_{t}$
for any $t \in [0,T]$ (equality $\hat{X}_{t} = \hat{q}_{t}$ holds along scenarios for
which $\hat{X}_{0}=\hat{q}_{0}$, which are of zero probability). 

The martingale property is a part of the verification Proposition 
\ref{pr:mfg_decoupling} for proving the optimality of $(\hat{X}_{t})_{0 \leq t \leq T}$
when $(\hat{\mu}_{t})_{0 \leq t \leq T}$ is the flow of conditional measures, but this is not sufficient. 
We must evaluate 
$\cV$ along a pair $(X_{t},\hat{q}_{t})_{0 \leq t \leq T}$, $(X_{t})_{0 \leq t \leq T}$
denoting a general controlled process satisfying \eqref{SDE:pareto}. Unfortunately, things then become more
difficult as $X_{t}$ might not be larger than $\hat{q}_{t}$. In other words, we are facing the fact 
that $\cV$ satisfies the PDE
\eqref{eq:full master PDE:pareto:1} on the set $\{x \geq q\}$ only. 
In order to circumvent this problem, a strategy consists in replacing $\cV$ by 
\begin{equation*}
\cV(x,q) = B x^p \Bigl( \frac{x^{bk}}{q^{bk}} \wedge 1 \Bigr),
\end{equation*}
for the same constant $B$ as above. Obviously, the PDE
\eqref{eq:full master PDE:pareto:1} is not satisfied when $x <q$, but
$\cV$ defines a subsolution on the set $\{0\leq x < q\}$, as \eqref{eq:full master PDE:pareto:1} holds but
with $=0$ replaced by $\geq 0$. 
Heuristically, this should show that 
\begin{equation}
\label{eq:pareto:martingale}
\biggl( 
\cV(X_{t},\hat{q}_{t})
+ \int_{0}^t f\bigl(X_{s},\hat{\mu}_{s},\alpha_{s}
\bigr) ds 
\biggr)_{0 \leq t \leq T}  
\end{equation}
is a submartingale when $(X_{t})_{0 \leq t \leq T}$ is an arbitrary controlled process driven by the control $(\alpha_{t})_{0 \leq t \leq T}$. 
Still, the justification requires some precaution as the function $\cV$ is not ${\mathcal C}^2$ (which is the standard framework
to apply It\^o's expansion),
its first-order derivatives being discontinuous on the diagonal $\{x=q\}$.
The argument for justifying the It\^o expansion is a bit technical so that we just give a sketchy proof of it. Basically,
we can write $\cV(X_{t},\hat{q}_{t}) = B (X_{t})^p [\varphi(X_{t}/\hat{q}_{t})]^{bk}$, 
with $\varphi(r) = \min(1,r)$. The key point is that 
$(X_{t}/\hat{q}_{t})_{0 \leq t \leq T}$ is always a bounded variation process, so that 
the expansion of $(\phi(X_{t}/\hat{q}_{t}))_{0 \leq t \leq T}$, for some function 
$\phi$, only requires to control $\phi'$ and not $\phi''$. 
Then, we can regularize $\varphi$ by a sequence $(\varphi_{n})_{n \geq 1}$
such that $(\varphi_{n})'(r) = 0$, for $r \leq 1-1/n$, 
$(\varphi_{n})'(r) = 1$, for $r \geq 1$ and  
$(\varphi_{n})'(r) \in [0,1]$ for $r \in [1-1/n,1]$. The fact that 
$(\varphi_{n})'(r)$ is uniformly bounded in $n$ permits 
to expand $(B (X_{t})^p [\varphi_{n}(X_{t}/\hat{q}_{t})]^{bk})_{0 \leq t \leq T}$
and then to pass to the limit. 

The submartingale property shows that 
\begin{equation}
\label{eq:pareto:submartingale}
\int_{\RR^d} 
\cV(x,\hat{q}_{0}) d\mu^{\hat{q}_{0}}(x) 
\leq \inf_{(\alpha_{t})_{0 \leq t \leq T}}
\biggl[ \int_{0}^T f(X_{t},\hat{q}_{t},\alpha_{t}) dt + 
\cV(X_{T},\hat{q}_{T}) \biggr],
\end{equation}
which, together with the martingale property along $(\hat{X}_{t})_{0 \leq t \leq T}$, shows
that equality holds and that the Pareto distributions $(\hat{\mu}_{t})_{0 \leq t \leq T}$
form a MFG equilibrium, provided $g$ is chosen as $\cV$. 
This constraint on the choice of $g$ can be circumvented by choosing 
$T=\infty$, as done in \cite{GueantLasryLions.pplnm}, in which case 
$f$ must be replaced by $e^{-rt}f$ for some discount rate $r>0$. 

The analysis in the case $T=\infty$ can be done in the following way.
In the proof of the martingale and submartingale properties, $\cV$ must replaced by $e^{-rt} \cV$. 
Plugging $e^{-rt} \cV$ and $e^{-rt} f$
in \eqref{eq:full master PDE:pareto:1} instead of $\cV$ and $f$, we understand that $\cV$ must now satisfy 
\eqref{eq:full master PDE:pareto:1} but with an additional $-r\cV$ in the left-hand side. Then, we can repeat 
the previous argument in order to identify the value of $B$ in \eqref{eq:V:candidat}. Finally, if 
$r$ is large enough, ${\mathbb E}[e^{-rT} \cV(\hat{X}_{T},\hat{q}_{T})]$ tends to $0$
as $T$ tends to the infinity in the martingale property 
\eqref{eq:pareto:martingale}. Similarly, if we restrict ourselves to a class of feedback controls
with a suitable growth, 
${\mathbb E}[e^{-rT} \cV(X_{T},\hat{q}_{T})]$ tends to $0$ in \eqref{eq:pareto:submartingale}, which permits to 
conclude. 

\subsection{Control of McKean-Vlasov Equations}
A similar framework could be used for considering the control of McKean-Vlasov equations. 
The analog of the strategy exposed in the previous paragraph would consist in limiting the optimization procedure
to controlled processes in \eqref{SDE:pareto} driven by controls $(\alpha_{t})_{0 \leq t \leq T}$
of the form $(\alpha_{t} = \gamma_{t} X_{t})_{0 \leq t \leq T}$ for some deterministic $(\gamma_{t})_{0 \leq t \leq T}$. 
Using an obvious extension of \eqref{fo:diff}, this would force 
the conditional marginal distributions of 
$(X_{t})_{0 \leq t \leq T}$ to be Pareto distributed. Exactly as above, this would transform the problem into 
a finite dimensional problem. Precisely, this would transform the problem into a finite dimensional optimal control problem. In that perspective, the corresponding master equation could be reformulated as an HJB equation in finite dimension. In comparison with, 
we emphasize, once again, that the master equation \eqref{eq:full master PDE:pareto:1} for the mean field game is not a HJB equation.

\section{Appendix: A Generalized Form of It\^o's Formula}
Our derivation of the master equation requires the use of a form of It\^o formula in a space of probability measures. This subsection is devoted to the proof of such a formula.

\subsection{Notion of Differentiability}
In Section \ref{se:ME}, we alluded to a specific notion of differentiability for functions of probability measures. 
The choice of this notion is dictated by the fact that 1) the probability measures we are dealing with appear as laws of random variables; 2) in trying to differentiate functions of measures, the infinitesimal variations which we consider  are naturally expressed as infinitesimal variations in the linear space of those random variables. The relevance of this notion of differentiability was argued by P.L. Lions in his lectures at the \emph{Coll\`ege de France} \cite{Lions}.  The notes \cite{Cardaliaguet} offer a readable account, and \cite{CarmonaDelarue_ap} provides several properties involving empirical measures. It is based on the \emph{lifting} of functions $\cP_2(\RR^d)\ni\mu\mapsto H(\mu)$ 
into functions $\t H$  defined on the Hilbert space $L^2(\t\Omega;\RR^d)$ over some probability space $(\t\Omega,\t\cF,\t\PP)$ by setting $\t H(\t X)=H({\mathcal L}(\t X))$, for $\t X \in L^2(\t \Omega;\RR^d)$, $\tilde{\Omega}$ being a Polish space and $\t \PP$ an atomless measure. 

Then, a function $H$
is said to be differentiable  at $\mu_0\in\cP_2(\RR^d)$ if there exists a random variable $\t X_0$ with law $\mu_0$, in other words 
satisfying ${\mathcal L}(\t X_0)=\mu_0$, such that the lifted function $\t H$ is Fr\'echet differentiable at $\t X_0$. 
Whenever this is the case, the Fr\'echet derivative of $\t H$ at $\t X_0$ can be viewed as an element of $L^2(\t\Omega;\RR^d)$ by identifying $L^2(\t\Omega;\RR^d)$ and its dual. It turns out that its distribution depends only upon the law $\mu_0$ and not upon the particular random variable $\t X_0$ having distribution $\mu_0$. See Section 6 in \cite{Cardaliaguet} for details. This Fr\'echet derivative $[D\t H](\t X_0)$ is called the representation of the derivative of $H$ at $\mu_0$ along the variable $\t X_{0}$.  It is shown in \cite{Cardaliaguet} that, as a random variable, it is of the form $\t h(\t X_0)$ for some deterministic measurable function $\t h : \RR^d \rightarrow \RR^d$, which is uniquely defined $\mu_0$-almost everywhere on $\RR^d$. The equivalence class of 
$\t h$ in $L^2(\RR^d,\mu_{0})$ being uniquely defined, it can be denoted by $\partial_{\mu} H(\mu_{0})$ (or $\partial
 H(\mu_{0})$ when no confusion is possible). It is then natural to call $\partial_\mu H(\mu_0)$ the derivative of $H$ at $\mu_{0}$
 and to identify it with a function $\partial_{\mu} H(\mu_{0})( \, \cdot \, ) : \RR^d \ni x \mapsto \partial_{\mu} H(\mu_{0})(x)
\in \RR^d$. 

This procedure permits to express $[D \t H](\t X_0)$ as a function of any random variable $\t X_{0}$ with distribution $\mu_0$, irrespective of where this random variable is defined.

\begin{remark}
\label{re:notation_der}
Since it is customary to identify a Hilbert space to its dual, we will identify $L^2(\tilde\Omega)$ with its dual, and in so doing, any derivative $D\t H (\t X)$ will be viewed as an element of $L^2(\t \Omega)$. In this way, the derivative in the direction $\t Y$ will be given by the inner product $[D\t H (\t X)]\cdot \t Y$. Accordingly, the second Frechet derivative $D^2\t H(\t X)$ which should be a linear operator from $L^2(\t \Omega)$ into itself because of the identification with its dual, will be viewed as a bilinear form on $L^2(\t \Omega)$. In particular, we shall use the notation $D^2\t H (\t X)[\t Y\cdot \t Z]$ for $\big([D^2\t H (\t X)] (\t Y)\big)\cdot \t Z$.

\end{remark}

\begin{remark}
\label{re:first_der}
The following result (see \cite{CarmonaDelarue_ap} for a proof) gives, though under stronger regularity assumptions on the Fr\'echet derivatives, a convenient way to handle this notion of differentiation with respect to probability distributions. If the function $\t H$ is Fr\'echet differentiable and if its Fr\'echet derivative is uniformly Lipschitz (i.e. there exists a constant $c>0$ such that $\| D\t H(\t X) - D\t H(\t X')\|\le c |\t X -\t X'|$ for all $\t X, \t X'$ in $L^2(\t \Omega)$), then there exists a function $\partial_\mu H$
$$
\cP_2(\RR^d)\times \RR^d \ni (\mu, x)\mapsto \partial_\mu H (\mu)(x)
$$
such as $|\partial_\mu H (\mu)(x)-\partial_\mu H (\mu)(x')|\le c|x-x'|$ for all $x,x'\in\RR^d$ and $\mu\in\cP_2(\RR^d)$, and for every $\mu\in\cP_2(\RR^d)$, $\partial_\mu H(\mu)(\t X)=D\t H(\t X)$ almost su
 if $\mu={\mathcal L}(\tilde{X})$.
\end{remark}

\subsection{It\^o's Formula along a Flow of Conditional Measures}
In the derivation of the master equation, the value function is expanded along a flow of conditional measures. As already explained in Subsection \ref{subse:derivation}, this requires a suitable construction of the lifting. 

Throughout this section, we assume that $(\Omega,{\mathcal F},\PP)$ is of the form 
$(\Omega^{0} \times \Omega^1,{\mathcal F}^0 \otimes {\mathcal F}^1,\PP^0 \otimes \PP^1)$, 
$(\Omega^0,{\mathcal F}^0,\PP^0)$ supporting the common noise $W^0$,
and $(\Omega^1,{\mathcal F}^1,\PP^1)$ the idiosyncratic noise $W$. 
So an element $\omega \in \Omega$ can be written as $\omega=(\omega^0,\omega^1)
\in \Omega^0 \times \Omega^1$,
and functionals $H(\mu(\omega^0))$ of a random probability 
measure $\mu(\omega^0) \in {\mathcal P}_{2}(\RR^d)$ with $\omega^0 \in \Omega^0$, can be lifted into 
$\tilde{H}(\tilde{X}(\omega^0,\cdot))=H({\mathcal L}(\tilde{X}(\omega^0,\cdot)))$, where $\tilde{X}(\omega^0,\cdot)$
is an element of $L^2(\tilde{\Omega}^1,\tilde{\mathcal F}^1,
\PP^1;\RR^d)$ with $\mu(\omega^0)$ as distribution,  
$(\tilde{\Omega}^1,\tilde{\mathcal F}^1,
\tilde\PP^1)$ being Polish and atomless. Put it differently, 
the random variable $\t X$ is defined on $(\t \Omega = \Omega^0 \times \t \Omega^1,
\t{\mathcal F}={\mathcal F}^0 \otimes \t {\mathcal F}^1,\t\PP= \PP^0 \otimes \t \PP^1)$. 

The objective is then to expand $(\tilde{H}(\tilde{\chi}_{t}(\omega^0,\cdot)))_{0 \leq t \leq T}$, 
where $(\tilde{\chi}_{t})_{0 \leq t \leq T}$ is the copy so constructed, of an It\^o process on $(\Omega,{\mathcal F},\PP)$ of the form:
\begin{equation*}
\chi_{t} = \chi_{0} + \int_{0}^t \beta_{s} ds + \int_{0}^t \int_{\Xi} \varsigma_{s,\xi}^0 W^0(d\xi,ds) + 
\int_{0}^t \varsigma_{s} dW_{s},
\end{equation*}
for $t \in [0,T]$,
assuming that the processes  $(\beta_{t})_{0 \leq t \leq T}$, $(\varsigma_{t})_{0 \leq t \leq T}$ 
and $(\varsigma_{t,\xi}^0)_{0 \leq t \leq T,\xi \in \Xi}$ are progressively measurable 
with respect to the filtration generated by $W$ and $W^0$ and square integrable, in the sense that 
\begin{equation}
\label{eq:ito:L2}
{\mathbb E} \int_{0}^T \biggl( 
\vert \beta_{t} \vert^2 +  \vert \varsigma_{t} \vert^2 + \int_{\Xi} \vert \varsigma_{t,\xi}^0 \vert^2 
d \nu(\xi) \biggr) dt < + \infty. 
\end{equation}
Denoting by $(\t W_{t})_{0 \leq t \leq T}$, $(\t \beta_{t})_{0 \leq t \leq T}$, $(\t \varsigma_{t})_{0 \leq t \leq T}$ 
and $(\t \varsigma_{t,\xi}^0)_{0 \leq t \leq T,\xi \in \Xi}$ the copies 
of $(W_{t})_{0 \leq t \leq T}$, $(\beta_{t})_{0 \leq t \leq T}$, $(\varsigma_{t})_{0 \leq t \leq T}$ 
and $(\varsigma_{t,\xi}^0)_{0 \leq t \leq T,\xi \in \Xi}$, we then have
\begin{equation*}
\t \chi_{t} = \t \chi_{0} + \int_{0}^t \t \beta_{s} ds + \int_{0}^t \int_{\Xi} \t \varsigma_{s,\xi}^0 W^0(d\xi,ds) + 
\int_{0}^t \t \varsigma_{s} d\t W_{s},
\end{equation*}
for $t \in [0,T]$.
In this framework, 
we emphasize that it makes sense to look at $\tilde{H}(\tilde{\chi}_{t}(\omega^0,\cdot))$, for $t \in [0,T]$, since 
\begin{equation*}
{\mathbb E}^0 \t {\mathbb E}^1 \bigl[ \sup_{0 \leq t \leq T} \vert \t \chi_{t} \vert^2 \bigr]
=
{\mathbb E}^0 {\mathbb E}^1 \bigl[ \sup_{0 \leq t \leq T} \vert \chi_{t} \vert^2 \bigr] <+ \infty,
\end{equation*}
where ${\mathbb E}^0$, ${\mathbb E}^1$
and $\t \EE^1$ are the expectations associated to $\PP^0$, $\PP^1$ and $\t \PP^1$ respectively. 

In order to simplify notations, we let 
$\check{\chi}_{t}(\omega^0)=\tilde{\chi}_{t}(\omega^0,\cdot)$ for $t \in [0,T]$, 
so that $(\ch \chi_{t})_{0 \leq t \leq T}$ 
is $L^2(\t \Omega^1,\t {\mathcal F}^1,\t \PP^1;\RR^d)$-valued,
$\PP^0$ almost surely. Similarly, we let 
$\check{\beta}_{t}(\omega^0)=\tilde{\beta}_{t}(\omega^0,\cdot)$, 
$\check{\varsigma}_{t}(\omega^0)=\tilde{\varsigma}_{t}(\omega^0,\cdot)$
$\check{\varsigma}_{t,\xi}(\omega^0)=\tilde{\varsigma}_{t,\xi}(\omega^0,\cdot)$,
for $t \in [0,T]$ and $\xi \in \Xi$. 
We then claim

\begin{proposition}
\label{prop:ito:simple}
On the top of the assumption and notation introduced right above, 
assume that $\tilde{H}$ is twice continuously Fr\'echet differentiable. Then,
we have $\PP^0$ almost surely, for all $t \in [0,T]$,
\begin{equation}
\label{eq:ito:simple}
\begin{split}
\tilde{H}\bigl(\ch \chi_{t}\bigr) &= 
\tilde{H}\bigl(\ch \chi_{0} \bigr) + \int_{0}^t  D \tilde{H} \bigl(\ch \chi_{s}\bigr)\cdot \ch \beta_{s}  ds  
+ \int_{0}^t \int_{\Xi }  D \tilde{H}\bigl(\ch \chi_{s}\bigr) \cdot \ch \varsigma_{s,\xi}^0  \;W^0(d\xi,ds)
\\
&\hspace{15pt} + \frac12 \int_{0}^t  \biggl(  D^2 \tilde{H}(\t \chi_{s}) \bigl[ \ch \varsigma_{s} \t G,\ch \varsigma_{s} \t G \bigr]
+
\int_{\Xi} D^2 \tilde{H}\bigl(\ch \chi_{s}\bigr) \bigl[ 
\ch \varsigma_{s,\xi}^0,\ch \varsigma_{s,\xi}^0 \bigr]  d\nu(\xi)   \biggr) ds. 
\end{split}
\end{equation}
where $\t G$ is an ${\mathcal N}(0,1)$-distributed random variable on 
$(\t \Omega^1,\t {\mathcal F}^1, \t \PP^1)$, independent of $(\t W_{t})_{t \geq 0}$. 
\end{proposition}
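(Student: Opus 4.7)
The plan is to prove \eqref{eq:ito:simple} by a time-discretization scheme, combined with a Fr\'echet Taylor expansion of $\t H$ applied to the $L^{2}(\t\Omega^{1};\RR^{d})$-valued increments of $\ch\chi$, and by matching each of the resulting contributions to one of the terms on the right-hand side. I would partition $[0,t]$ as $0=t_{0}<t_{1}<\cdots<t_{n}=t$ with vanishing mesh, decompose the increment $\Delta_{k}:=\ch\chi_{t_{k+1}}-\ch\chi_{t_{k}}$ into $\Delta_{k}^{b}:=\int_{t_{k}}^{t_{k+1}}\ch\beta_{s}\,ds$, $\Delta_{k}^{0}:=\int_{t_{k}}^{t_{k+1}}\int_{\Xi}\ch\varsigma^{0}_{s,\xi}\,W^{0}(d\xi,ds)$ and $\Delta_{k}^{W}:=\int_{t_{k}}^{t_{k+1}}\ch\varsigma_{s}\,d\t W_{s}$, and use the $\mathcal{C}^{2}$ hypothesis on $\t H$ to write
\begin{equation*}
\t H(\ch\chi_{t_{k+1}})-\t H(\ch\chi_{t_{k}})=D\t H(\ch\chi_{t_{k}})\cdot\Delta_{k}+\tfrac{1}{2}D^{2}\t H(\ch\chi_{t_{k}})[\Delta_{k},\Delta_{k}]+R_{k},
\end{equation*}
with $R_{k}=o(\|\Delta_{k}\|^{2}_{L^{2}(\t\Omega^{1})})$ by uniform continuity of $D^{2}\t H$ on $L^{2}$-bounded sets. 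Summation over $k$ and passage to the limit will then rely on the integrability condition \eqref{eq:ito:L2} to control both the Riemann and stochastic approximations.

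For the first-order contributions, standard Riemann-sum convergence handles the drift and the It\^o isometry for the scalar white-noise field $W^{0}$ handles the common-noise term, producing the first two integrals on the right-hand side of \eqref{eq:ito:simple}. The first-order idiosyncratic contribution $D\t H(\ch\chi_{t_{k}})\cdot\Delta_{k}^{W}$ vanishes identically for each $k$ and each $\omega^{0}$: by Remark \ref{re:first_der}, the map $\t\omega^{1}\mapsto D\t H(\ch\chi_{t_{k}})(\t\omega^{1})=\partial_{\mu}H(\mathcal{L}(\ch\chi_{t_{k}}))(\ch\chi_{t_{k}}(\t\omega^{1}))$ is $\t{\mathcal F}^{1}_{t_{k}}$-measurable (for the filtration generated by $\t W$ on $\t\Omega^{1}$), so the $L^{2}(\t\Omega^{1})$ inner product with the $\t W$-martingale increment $\Delta_{k}^{W}$ equals the $\t\PP^{1}$-expectation of the stochastic integral $\int_{t_{k}}^{t_{k+1}}\partial_{\mu}H(\mathcal{L}(\ch\chi_{t_{k}}))(\ch\chi_{t_{k}})\cdot\ch\varsigma_{s}\,d\t W_{s}$, which is zero.

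For the second-order contributions, the cross terms involving $\Delta_{k}^{b}$ are $o(\Delta t_{k})$ and vanish upon summation, while the mixed bilinear $D^{2}\t H(\ch\chi_{t_{k}})[\Delta_{k}^{0},\Delta_{k}^{W}]$ also vanishes in the limit: $W^{0}$ lives on $\Omega^{0}$ and $\t W$ on $\t\Omega^{1}$, so the pairing reduces, at leading order, to a $W^{0}$-adapted quantity coupled to a $\t W$-martingale increment with zero $\t\PP^{1}$-expectation. The surviving common-noise term $D^{2}\t H(\ch\chi_{t_{k}})[\Delta_{k}^{0},\Delta_{k}^{0}]$ converges, via the covariance identity $\EE[W^{0}(\Lambda,B)W^{0}(\Lambda',B')]=\nu(\Lambda\cap\Lambda')|B\cap B'|$, to $\int_{0}^{t}\!\int_{\Xi}D^{2}\t H(\ch\chi_{s})[\ch\varsigma^{0}_{s,\xi},\ch\varsigma^{0}_{s,\xi}]\,d\nu(\xi)\,ds$, which is the last summand of \eqref{eq:ito:simple}.

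The main obstacle lies in handling the surviving idiosyncratic quadratic term $D^{2}\t H(\ch\chi_{t_{k}})[\Delta_{k}^{W},\Delta_{k}^{W}]$ and showing that it converges to $\int_{0}^{t}D^{2}\t H(\ch\chi_{s})[\ch\varsigma_{s}\t G,\ch\varsigma_{s}\t G]\,ds$ with $\t G\sim\mathcal{N}(0,1)$ independent of $\t W$ on a product extension of $\t\Omega^{1}$. The key observation is that the bilinear form $D^{2}\t H(\tilde X)$ on $L^{2}(\t\Omega^{1};\RR^{d})$ depends only on the joint distribution of $\tilde X$ and its arguments, so that the normalized Brownian increment $\t Z_{k}:=(\t W_{t_{k+1}}-\t W_{t_{k}})/\sqrt{t_{k+1}-t_{k}}$, which has the law of $\t G$ and is independent of $\t{\mathcal F}^{1}_{t_{k}}$, can be substituted for $\t G$. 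Replacing $\ch\varsigma_{s}$ by $\ch\varsigma_{t_{k}}$ on $[t_{k},t_{k+1}]$ and using the approximation $\Delta_{k}^{W}\simeq\ch\varsigma_{t_{k}}\sqrt{t_{k+1}-t_{k}}\,\t Z_{k}$ then yields, after extracting the scaling factor $t_{k+1}-t_{k}$ from the bilinear form, the desired limit. Controlling these approximation errors and the Taylor remainders $R_{k}$ uniformly in the mesh via \eqref{eq:ito:L2} and the continuity of $D^{2}\t H$ closes the proof.
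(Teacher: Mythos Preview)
Your proposal is correct and follows essentially the same approach as the paper's proof: a time-discretization combined with a second-order Taylor expansion in $L^{2}(\t\Omega^{1};\RR^{d})$, the vanishing of the first-order idiosyncratic contribution via the $\t{\mathcal F}^{1}_{t_{k}}$-measurability of $\partial_{\mu}H(\mathcal L(\ch\chi_{t_{k}}))(\ch\chi_{t_{k}})$, and the identification of the idiosyncratic quadratic term through the law-invariance of $\t H$ (so that $\ch\varsigma_{t_{k}}(\t W_{t_{k+1}}-\t W_{t_{k}})$ may be replaced by $\ch\varsigma_{t_{k}}\sqrt{t_{k+1}-t_{k}}\,\t G$). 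The only cosmetic differences are that the paper first reduces to simple step processes before discretizing (making the approximation $\ch\varsigma_{s}\approx\ch\varsigma_{t_{k}}$ exact rather than an additional error to control) and uses the exact Taylor remainder with an intermediate point together with Kolmogorov's continuity theorem for $t\mapsto\ch\chi_{t}$, rather than your appeal to uniform continuity of $D^{2}\t H$ on bounded sets.
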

\begin{remark}
\label{re:ito}
Following Remark \ref{re:first_der} above, one can specialize It\^o's formula to a situation with smoother derivatives. 
See \cite{ChassagneuxCrisanDelarue} for a more detailed account. 
Indeed, if one assumes that 
\begin{enumerate}\itemsep=-2pt
\item the function $H$ is $C^1$ in the sense given above and its first derivative is Lipschitz;
\item for each fixed $x\in\RR^d$, the function $\mu \mapsto \partial_\mu H(x,\mu)$ is differentiable with Lipschitz derivative, and consequently, there exists a function 
$$
(\mu,x',x)\mapsto \partial^2_{\mu}H(x,\mu)(x') \in \RR^{d \times d}
$$
which is Lipschitz in $x'$ uniformly with respect to $x$ and $\mu$ and such that 
$\partial^2_{\mu}H(x,\mu)(\t X)$ gives the Fr\'echet derivative 
of $\mu\mapsto \partial_\mu H(x,\mu)$ for every $x\in\RR^d$ as long as $\cL(\t X) = \mu$;
\item for each fixed $\mu\in\cP_2(\RR^d)$, the function $x \mapsto \partial_\mu H(x,\mu)$ is differentiable with Lipschitz derivative, and consequently, there exists a bounded function $(x,\mu)\mapsto \partial_x\partial_\mu H(x,\mu)
\in \RR^{d \times d}$ 
giving the value of its derivative.
\end{enumerate}
Then, the second order term appearing in It\^o's formula can be expressed as the sum of two explicit operators whose interpretations are more natural. Indeed, the second Fr\'echet derivative $D^2\t H(\t X)$ can be written as the linear operator $\t Y\mapsto A\t Y$ on $L^2(\t \Omega^1,\t {\mathcal F}^1,\PP^1;\RR^d)$ defined by
$$
[A\t Y](\t \omega^1)=\int_{\t \Omega^1}
\partial_{\mu}^2 H\bigl(\t X(\t \omega^1),{\mathcal L}(\t X)\bigr)\bigl(\t X'(\omega')\bigr)\t Y(\omega')\,d\t \PP^1(\omega')\;+\;\partial_x\partial_\mu H\bigl({\mathcal L}({\t X}),\t X(\omega^1)\bigr)\t Y(\omega^1).
$$
\end{remark}

The derivation of the master equation actually requires a more general result than Proposition \ref{prop:ito:simple}. 
Indeed one needs to expand $(\tilde{H}(X_{t},\ch{\chi}_{t}))_{0 \leq t \leq T}$
for a function $\tilde{H}$ of $(x,\tilde{X}) \in \RR^d \times L^2(\tilde{\Omega}^1,
\t {\mathcal F}^1, \t \PP^1;\RR^d)$. As before, 
$(\ch{\chi}_{t})_{0 \leq t \leq T}$ is understood as 
$(\tilde{\chi}_{t}(\omega^0,\cdot))_{0 \leq t \leq T}$. 
The process $(X_{t})_{0 \leq t \leq T}$ is assumed to be another It\^o process, 
defined on the original space $(\Omega,{\mathcal F},\PP)
= (\Omega^0 \times \Omega^1,{\mathcal F}^0 \otimes 
{\mathcal F}^1,\PP^0 \otimes \PP^1)$, with dynamics of the form
\begin{equation*}
X_{t} = X_{0} + \int_{0}^t b_{s} ds + \int_{0}^t \int_{\Xi} \sigma_{s,\xi}^0 W^0(d\xi,ds) + 
\int_{0}^t \sigma_{s} dW_{s},
\end{equation*}
for $t \in [0,T]$, the processes  $(b_{t})_{0 \leq t \leq T}$, $(\sigma_{t})_{0 \leq t \leq T}$ 
and $(\sigma_{t,\xi}^0)_{0 \leq t \leq T,\xi \in \Xi}$ being progressively-measurable 
with respect to the filtration generated by $W$ and $W^0$, and square integrable 
as in \eqref{eq:ito:L2}. 
Under these conditions, the result of Proposition 
\ref{prop:ito:simple} can be extended to: 

\begin{proposition}
\label{prop:ito:joint}
On the top of the above assumptions and notations, assume that $\tilde{H}$ is twice continuously Fr\'echet differentiable
on $\RR^d \times L^2(\t \Omega^1,\t {\mathcal F}^1,\t \PP^1;\RR^d)$. Then,
we have $\PP$ almost surely, for all $t \in [0,T]$,
\begin{equation*}
\begin{split}
&\tilde{H}
\bigl(X_{t},\ch \chi_{t}\bigr) = \tilde{H}\bigl(X_{0},\ch \chi_{0}\bigr) 
\\
&\hspace{5pt}
+ \int_{0}^t  \Bigl( \langle \partial_{x} \tilde{H}\bigl(X_{s},\ch \chi_{s}\bigr), b_{s}\rangle
+
D_{\mu} \tilde{H} \bigl(\ch \chi_{s}\bigr) \cdot \ch \beta_{s} \Bigr) ds
+ 
\int_{0}^t  \langle
\partial_{x} \tilde{H}\bigl(X_{s},\ch \chi_{s}\bigr), \sigma_{s}\rangle dW_{s}
\\
&\hspace{5pt} + \int_{0}^t \int_{\Xi } 
\Bigl( \langle \partial_{x} \tilde{H}\bigl(X_{s},\ch \chi_{s}\bigr), \sigma^0_{s,\xi}\rangle
+ 
D_{\mu} \tilde{H}\bigl(X_{s},\ch \chi_{s}\bigr)\cdot \ch \varsigma_{s,\xi}^0  \Bigr)\; W^0(d\xi,ds)
\\
&\hspace{5pt} + \frac12 \int_{0}^t \int_{\Xi}
\Bigl( {\rm trace} \bigl[ 
\partial^2_{x} \tilde{H}\bigl(X_{s},\ch \chi_{s}\bigr)
\sigma^0_{s,\xi}( \sigma^0_{s,\xi})^{\dagger}
\bigr]
+
D^2_{\mu} \tilde{H}\bigl(X_{s},\ch \chi_{s}\bigr)  \bigl[ 
\ch \varsigma_{s,\xi}^0,\ch \varsigma_{s,\xi}^0
\bigr] 
\Bigr) d\nu(\xi) ds
\\
&\hspace{5pt} 
+ \frac12 \int_{0}^t 
\bigg({\rm trace} \bigl[ 
\partial^2_{x} \tilde{H}\bigl(X_{s},\ch \chi_{s}\bigr)
\sigma_{s}( \sigma_{s})^{\dagger}
\bigr] + D^2_{\mu} \tilde{H}\bigl(X_{s},\ch \chi_{s}\bigr) \bigl[ \ch \varsigma_{s} \t G,\ch
\varsigma_{s} \t G  \bigr] \bigg)ds \\
&\hspace{5pt} 
+ 
\int_{0}^t 
\int_{\Xi}\bigl\langle\partial_{x} D_{\mu}
\tilde{H}\bigl(X_{s},\ch \chi_{s}\bigr) \cdot \ch
\varsigma_{s,\xi}^0\, ,\, \sigma_{s,\xi}^0
\bigr\rangle  d\nu(\xi) ds. 
\end{split}
\end{equation*}
where $\t G$ is an ${\mathcal N}(0,1)$-distributed random variable on 
$(\t \Omega^1,\t {\mathcal F}^1, \t \PP^1)$, independent of $(\t W_{t})_{t \geq 0}$. 
The partial derivatives in the infinite dimensional component are denoted with the index 
`$\mu$'. In that framework, the term 
$\langle  \partial_{x} D_{\mu}
\tilde{H}(X_{s},\ch \chi_{s}) \cdot \ch
\varsigma_{s,\xi}^0 ,\sigma_{s,\xi}^0
\rangle$
reads
$$ \sum_{i=1}^d \{ \partial_{x_{i}} D_{\mu}
\tilde{H}(X_{s},\ch \chi_{s}) \cdot \ch
\varsigma_{s,\xi}^0 \} \bigl( \sigma_{s,\xi}^0 \bigr)_{i}.$$
\end{proposition}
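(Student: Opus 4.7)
The plan is to reduce Proposition \ref{prop:ito:joint} to the combination of the standard finite-dimensional It\^o formula applied in the variable $x$ and of Proposition \ref{prop:ito:simple} applied in the variable $\ch \chi$, the cross term between them being dictated by the common noise $W^0$, which is the only noise shared by the two processes.

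The starting point is a joint time-discretization argument. Fix a partition $0 = t_0 < t_1 < \cdots < t_n = t$ of $[0,t]$ with mesh going to zero and write the telescoping sum
\[
\t H(X_t,\ch \chi_t) - \t H(X_0,\ch \chi_0) = \sum_{i=0}^{n-1} \bigl[ \t H(X_{t_{i+1}},\ch \chi_{t_{i+1}}) - \t H(X_{t_i},\ch \chi_{t_i}) \bigr].
\]
On each increment I would perform a joint second-order Taylor expansion based on the Fr\'echet differentiability of $\t H$ in its second argument and classical differentiability in its first. This produces linear terms in $X_{t_{i+1}}-X_{t_i}$ and in $\ch \chi_{t_{i+1}}-\ch \chi_{t_i}$, pure quadratic terms in each of these two increments separately, and a bilinear cross term involving $\partial_x D_\mu \t H$. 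The higher-order remainder is controlled by the continuity of the second-order Fr\'echet derivatives of $\t H$ together with the square integrability assumption \reff{eq:ito:L2}.

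The next step is to identify the four types of limits. The linear terms produce the drift and stochastic integrals of the statement: the $\partial_x \t H$ parts give the integrals against $b_s ds$, $\sigma_s dW_s$ and $\sigma^0_{s,\xi} W^0(d\xi,ds)$ by classical It\^o calculus, while the $D_\mu \t H$ parts give the corresponding integrals against $\ch \beta_s ds$ and $\ch \varsigma^0_{s,\xi} W^0(d\xi,ds)$ exactly as in Proposition \ref{prop:ito:simple}. The pure quadratic term in $X$ yields the usual It\^o corrections with $\partial_x^2 \t H$ traced against $\sigma_s \sigma_s^\dagger$ and against $\int_\Xi \sigma^0_{s,\xi}(\sigma^0_{s,\xi})^\dagger d\nu(\xi)$. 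The pure quadratic term in $\ch \chi$ is precisely the one handled in Proposition \ref{prop:ito:simple}, contributing the $D_\mu^2 \t H$ pieces along the idiosyncratic direction $\ch \varsigma_s \t G$ and the common direction $\ch \varsigma^0_{s,\xi}$.

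The only truly new contribution is the bilinear cross term. Products of the $\t W$-driven part of $\ch \chi$ and of the $W$-driven part of $X$ average out by independence under $\PP^0 \otimes \PP^1 \otimes \t \PP^1$, so that only the shared $W^0$-components survive; by polarization of the covariance structure $\EE[W^0(\Lambda,B)W^0(\Lambda',B')] = \nu(\Lambda\cap\Lambda')|B\cap B'|$ recalled in Section \ref{se:regames}, this contribution is expected to equal the stated integral
\[
\int_0^t \int_\Xi \bigl\langle \partial_x D_\mu \t H(X_s,\ch \chi_s) \cdot \ch \varsigma^0_{s,\xi}\, , \, \sigma^0_{s,\xi} \bigr\rangle d\nu(\xi) ds.
\]
The main obstacle will be the rigorous justification of this cross term: it mixes objects defined on $(\Omega^0 \times \Omega^1)$ (namely $X$) with objects defined on $(\Omega^0 \times \t \Omega^1)$ (namely $\ch \chi$), while $\partial_x D_\mu \t H$ must be interpreted as a linear map from $L^2(\t \Omega^1;\RR^d)$ into $\RR^{d\times d}$. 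A workable route is to approximate $\sigma^0$ and $\ch \varsigma^0$ by step processes that are piecewise constant in $t$ and simple in $\xi$, carry out the computation explicitly on such processes using Fubini between $(\omega^1,\t \omega^1)$, and then pass to the limit by combining \reff{eq:ito:L2} with the continuity of $\partial_x D_\mu \t H$ along the path $(X_s,\ch \chi_s)_{0 \leq s \leq T}$. Once the cross term is identified, the proposition follows by collecting all contributions, in the spirit of \cite{ChassagneuxCrisanDelarue} adapted to the random-environment setting.
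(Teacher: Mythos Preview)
Your proposal is correct and aligns with the paper's approach. The paper does not actually prove Proposition~\ref{prop:ito:joint} in detail: it simply states that the proof is similar to that of Proposition~\ref{prop:ito:simple}, whose proof proceeds exactly as you describe (reduction to simple processes, telescoping over a partition, second-order Taylor expansion, and term-by-term identification). Your outline is the natural extension of that argument to the joint $(x,\ch\chi)$ variable, with the cross term handled just as you suggest.
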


\subsection{Proof of It\^o's Formula} 
We only provide the proof of Proposition \ref{prop:ito:simple} as 
the proof of Proposition \ref{prop:ito:joint} is similar. 

\vskip 4pt
By a standard continuity argument, it is sufficient to prove that Equation 
\eqref{eq:ito:simple} holds for any $t \in [0,T]$ $\PP^0$-almost surely. 
In particular, we can choose $t=T$. 
Moreover, by a standard approximation argument, it is sufficient to consider the case of
simple processes $(\beta_{t})_{0 \leq t \leq T}$, $(\varsigma_{t})_{0 \leq t \leq T}$
and $(\varsigma_{t,\xi}^0)_{0 \leq t \leq T,\xi}$  of the form
\begin{equation*}
\beta_{t} = \sum_{i=0}^{M-1} \beta_{i} {\mathbf 1}_{[\tau_{i},\tau_{i+1})}(t), \quad
\varsigma_{t} = \sum_{i=0}^{M-1} \varsigma_{i} {\mathbf 1}_{[\tau_{i},\tau_{i+1})}(t), \quad 
\varsigma_{t,\xi}^0 = \sum_{i=0}^{M-1} \sum_{j=1}^N 
\varsigma^0_{i,j}
{\mathbf 1}_{[\tau_{i},\tau_{i+1})}(t)
{\mathbf 1}_{A_{j}}(\xi),
\end{equation*}
where $M,N \geq 1$, $0=\tau_{0}<\tau_{1} < \dots < \tau_{M}=T$, 
$(A_{j})_{1 \leq j \leq N}$ are piecewise disjoint Borel subsets of $\Xi$ and 
$(\beta^i,\varsigma^i,\varsigma^0_{i,j})_{1 \leq j \leq N}$ are bounded ${\mathcal F}_{\tau_{i}}$-measurable random variables. 

The strategy is taken from \cite{ChassagneuxCrisanDelarue} and consists in splitting 
$\tilde{H}(\ch \chi_{T}) - \tilde{H}(\ch \chi_{0})$ into
\begin{equation*}
\tilde{H}(\ch \chi_{T}) - \tilde{H}(\ch \chi_{0})
= \sum_{k=0}^{K-1} \bigl( 
\tilde{H}(\ch \chi_{t_{k+1}}) - \tilde{H}(\ch \chi_{t_{k}}) \bigr),
\end{equation*}
where $0=t_{0}< \dots < t_{K}=T$ is a subdivision of $[0,T]$ of step $h$ such that, for any $k \in \{0,\dots,K-1\}$,
there exists some $i \in \{0,\dots,M-1\}$ such that
$[t_{k},t_{k+1}) \subset [\tau_{i},\tau_{i+1})$. 
We then  start with approximating 
a general increment $\tilde{H}(\ch \chi_{t_{k+1}}) - \tilde{H}(\ch \chi_{t_{k}})$, omitting to specify the  
dependence upon $\omega^0$. By Taylor's formula, we know that
we can find some $\delta \in [0,1]$ such that  
\begin{equation}
\label{eq:22:1:1}
\begin{split}
&\tilde{H}(\ch \chi_{t_{k+1}}) - \tilde{H}(\ch \chi_{t_{k}}) 
\\
&= 
D \tilde{H}(\ch \chi_{t_{k}})  \cdot (\ch \chi_{t_{k+1}} - \ch \chi_{t_{k}}) 
+ \frac{1}{2}
D^2 \tilde{H}\bigl(\ch \chi_{t_{k}} + \delta (\ch \chi_{t_{k+1}}- \ch \chi_{t_{k}}) \bigr)  
\bigl( \ch \chi_{t_{k+1}} - \ch \chi_{t_{k}}, \ch \chi_{t_{k+1}} - \ch \chi_{t_{k}} \bigr)
\\
&= 
D \tilde{H}(\ch \chi_{t_{k}})  \cdot (\ch \chi_{t_{k+1}} - \ch \chi_{t_{k}}) + \frac{1}{2}
D^2 \tilde{H}(\ch \chi_{t_{k}}) 
\bigl( \ch \chi_{t_{k+1}} - \ch \chi_{t_{k}}, \ch \chi_{t_{k+1}} - \ch \chi_{t_{k}} \bigr) 
\\
&\hspace{15pt} + 
\bigl[
D^2 \tilde{H}\bigl(\ch \chi_{t_{k}} + \delta (\ch \chi_{t_{k+1}}- \ch \chi_{t_{k}}) \bigr) 
- D^2 \tilde{H}\bigl(\ch \chi_{t_{k}} \bigr) 
\bigr]
\bigl( \ch \chi_{t_{k+1}} - \ch \chi_{t_{k}}, \ch \chi_{t_{k+1}} - \ch \chi_{t_{k}} \bigr).
\end{split}
\end{equation}
By Kolmogorov continuity theorem, we know that, $\PP^0$ almost surely,
the mapping $[0,T] \ni t \mapsto \tilde{\chi}_{t} \in L^2(\t \Omega^1,\t {\mathcal F}^1, \t \PP^1;\RR^d)$
is continuous. Therefore, $\PP^0$ almost surely, the mapping 
$ (s,t,\delta) \mapsto D^2 \tilde{H}(\ch \chi_{t} + \delta (\ch \chi_{s}- \ch \chi_{t}))$
is continuous from $[0,T]^2 \times [0,1]$ to the space of bounded operators from 
$L^2(\t \Omega^1,\t {\mathcal F}^1, \t \PP^1;\RR^d)$ into itself, which proves that, $\PP^0$ almost surely, 
\begin{equation*}
\lim_{h \searrow 0} \sup_{s,t \in [0,T], \vert t-s \vert \leq h}
\sup_{\delta \in [0,1]}
\vvvert
D^2 \tilde{H}\bigl(\ch \chi_{t} + \delta (\ch \chi_{t+h}- \ch \chi_{t}) \bigr) 
- D^2 \tilde{H}\bigl(\ch \chi_{t} \bigr) 
\vvvert_{2,\tilde{\Omega}^1}=0,
\end{equation*} 
$\vvvert \cdot \vvvert_{2,\tilde{\Omega}^1}$ denoting the 
operator norm on the space of bounded operators on 
$L^2(\t \Omega^1,\t {\mathcal F}^1, \t \PP^1;\RR^d)$. Now,
\begin{equation*}
\begin{split}
&\biggl\vert \sum_{k=0}^{K-1}
\bigl[
D^2 \tilde{H}\bigl(\ch \chi_{t_{k}} + \delta (\ch \chi_{t_{k+1}}- \ch \chi_{t_{k}}) \bigr) 
- D^2 \tilde{H}\bigl(\ch \chi_{t_{k}} \bigr) 
\bigr]
\bigl( \ch \chi_{t_{k+1}} - \ch \chi_{t_{k}}, \ch \chi_{t_{k+1}} - \ch \chi_{t_{k}} \bigr)
\biggr\vert
\\
&\leq \sup_{s,t \in [0,T], \vert t-s \vert \leq h}
\sup_{\delta \in [0,1]}
\vvvert
D^2 \tilde{H}\bigl(\ch \chi_{t} + \delta (\ch \chi_{s}- \ch \chi_{t}) \bigr) 
- D^2 \tilde{H}\bigl(\ch \chi_{t} \bigr) 
\vvvert_{2,\tilde{\Omega}^1} \sum_{k=0}^{K-1}
\Vert
\ch \chi_{t_{k+1}} - \ch \chi_{t_{k}} \Vert_{L^2(\t \Omega)}^2.
\end{split}
\end{equation*}
Since 
\begin{equation*}
\EE^0 \biggl[ \sum_{k=0}^{K-1}
\Vert
\ch \chi_{t_{k+1}} - \ch \chi_{t_{k}} \Vert_{L^2(\t \Omega)}^2 \biggr]
\leq C \sum_{k=0}^{K-1} \bigl(t_{k+1}- t_{k}\bigr) \leq CT, 
\end{equation*}
we deduce that 
\begin{equation}
\label{eq:ito:proof:1}
\biggl\vert \sum_{k=0}^{K-1}
\bigl[
D^2 \tilde{H}\bigl(\ch \chi_{t_{k}} + \delta (\ch \chi_{t_{k+1}}- \ch \chi_{t_{k}}) \bigr) 
- D^2 \tilde{H}\bigl(\ch \chi_{t_{k}} \bigr) 
\bigr]
\cdot \bigl( \ch \chi_{t_{k+1}} - \ch \chi_{t_{k}}, \ch \chi_{t_{k+1}} - \ch \chi_{t_{k}} \bigr)
\biggr\vert \rightarrow 0
\end{equation}
in $\PP^0$ probability as $h$ tends to $0$. 
We now compute the various terms appearing in \eqref{eq:22:1:1}. We write
\begin{equation*}
\begin{split}
&D \tilde{H}(\ch \chi_{t_{k}})  \cdot (\ch \chi_{t_{k+1}} - \ch \chi_{t_{k}})
= D \tilde{H}(\ch \chi_{t_{k}})  \cdot \int_{t_{k}}^{t_{k+1}} \t \beta_{s}(
\omega^0,\cdot) ds
\\
&\hspace{5pt} + D \tilde{H}(\ch \chi_{t_{k}}) \cdot 
\biggl[
\biggl( \int_{t_{k}}^{t_{k+1}}
\int_{\Xi}
\t \varsigma_{s,\xi}^0 W^0(d\xi,ds) \biggr)(\omega^0,\cdot) \biggr] + 
D \tilde{H}(\ch \chi_{t_{k}}) \cdot \biggl[ 
\biggl( \int_{t_{k}}^{t_{k+1}} \t \varsigma_{s} d\t W_{s} \biggr) \biggr] (\omega^0,\cdot). 
\end{split}
\end{equation*}
Assume that, for some $0 \leq i \leq M-1$, $\tau_{i} \leq t_{k} < t_{k+1} \leq \tau_{i+1}$. Then,
\begin{equation}
\label{eq:ito:drift:1}
D \tilde{H}(\ch \chi_{t_{k}})  \cdot \int_{t_{k}}^{t_{k+1}} \t \beta_{s}(
\omega^0,\cdot) ds
= \bigl(t_{k+1}-t_{k} \bigr) D \tilde{H}(\ch \chi_{t_{k}}) \cdot \t \beta_{t_{k}}(\omega^0,\cdot).
\end{equation}
Note that the right-hand side is well-defined as $\beta_{t_{k}}$ is bounded. 
Similarly, we notice that 
\begin{equation*}
D \tilde{H}(\ch \chi_{t_{k}})  \cdot \biggl[
\biggl(
 \int_{t_{k}}^{t_{k+1}} \t \varsigma_{s} d \t W_{s} \biggr)(\omega^0,\cdot) \biggr] 
= \bigl(t_{k+1}-t_{k} \bigr) D \tilde{H}(\ch \chi_{t_{k}}) \cdot \bigl[ \t \varsigma_{t_{k}}(\omega^0,\cdot) 
\bigl(\t W_{t_{k+1}} - \t W_{t_{k}} \bigr) \bigr]. 
\end{equation*}
Now, using the specific form of $D \tilde{H}$,  
$D \tilde{H}(\ch \chi_{t_{k}}(\omega^0))=(\t \omega^1 
\mapsto \partial_{\mu}H(\t \chi_{t_{k}}(\omega^0,\t \omega^1)))$ appears to be a $\t {\mathcal F}_{t_{k}}$-measurable random variable,  
and as such, it is orthogonal to $\t \varsigma_{t_{k}}(\omega^0,\cdot) (\t W_{t_{k+1}} -\t W_{t_{k}})$, which shows that 
\begin{equation}
\label{eq:22:1:3}
D \tilde{H}(\ch \chi_{t_{k}})  \cdot \biggl[
\biggl(
 \int_{t_{k}}^{t_{k+1}} \t \varsigma_{s} d\t W_{s} \biggr)(\omega^0,\cdot) \biggr] 
=  0. 
\end{equation}
Finally, 
\begin{equation*}
\begin{split}
&D \tilde{H}(\ch \chi_{t_{k}})  \cdot \biggl[
\biggl(
 \int_{t_{k}}^{t_{k+1}} \int_{\Xi}
\t \varsigma^0_{s,\xi} W^0(d\xi,ds) \biggr)(\omega^0,\cdot) \biggr] 
\\
&=  D \tilde{H}(\ch \chi_{t_{k}})  \cdot \biggl[ \sum_{j=1}^N \t \varsigma^0_{i,j}(\omega^0,\cdot)
W^0\bigl(A_{j} \times [t_{k},t_{k+1}) \bigr)(\omega^0) \biggr].
\end{split}
\end{equation*}
Now, $W^0\bigl(A_{j} \times [t_{k},t_{k+1}) \bigr)(\omega^0)$ behaves as a constant in the linear form above. Therefore,
\begin{equation}
\label{eq:22:1:5}
\begin{split}
&D \tilde{H}(\ch \chi_{t_{k}})  \cdot \biggl[
\biggl(
 \int_{t_{k}}^{t_{k+1}} \int_{\Xi}
 \t \varsigma^0_{s,\xi} W^0(d\xi,ds) \biggr)(\omega^0,\cdot) \biggr] 
\\
&\hspace{15pt} =  \sum_{j=1}^N
D \tilde{H}(\ch \chi_{t_{k}})  \cdot \t \varsigma^0_{i,j}(\omega^0,\cdot)
W^0\bigl(A_{j} \times [t_{k},t_{k+1}) \bigr)(\omega^0)
\\
&\hspace{15pt} = \biggl[\int_{t_{k}}^{t_{k+1}} \int_{\Xi}
\bigl\{
D \tilde{H}(\ch \chi_{t_{k}}) \cdot \t \varsigma^0_{s,\xi}(\omega^0,\cdot) \bigr\}
W(d\xi,ds) \biggr](\omega^0).
\end{split} 
\end{equation}
Therefore, in analogy with \eqref{eq:ito:proof:1}, we deduce 
from \eqref{eq:ito:drift:1}, \eqref{eq:22:1:3}
and \eqref{eq:22:1:5} that
\begin{equation*}
\sum_{k=0}^{K-1}
D \tilde{H}(\ch \chi_{t_{k}})  \cdot (\ch \chi_{t_{k+1}} - \ch \chi_{t_{k}})
\rightarrow \int_{0}^T D \tilde{H}(\tilde{X}_{s}) \cdot \ch \beta_{s} ds 
+ \int_{0}^T \int_{\Xi} \bigl\{
D \tilde{H}(\ch \chi_{s}) \cdot \ch \varsigma^0_{s,\xi} \bigr\}
W(d\xi,ds),
\end{equation*} 
in $\PP^0$ probability as $h$ tends to $0$. 
\vskip 4pt
We now reproduce this analysis for the second order derivatives. 
We need to compute: 
\begin{equation*}
\begin{split}
\Gamma_k &:= D^2 \tilde{H}(\ch \chi_{t_{k}}) \Bigl[\t \beta_{t_{k}}(\omega^0,\cdot) \bigl(t_{k+1}-t_{k}\bigr) + 
\t \varsigma_{t_{k}}(\omega^0,\cdot) \bigl( \t W_{t_{k+1}}- \t W_{t_{k}}\bigr)
\\
&\hspace{200pt}
+ \sum_{j=1}^N \t \varsigma_{i,j}^0(\omega^0,\cdot) W^0\bigl([t_{k},t_{k+1}) \times A_{j}\bigr)(\omega^0), 
\\
&\hspace{10pt}
\t \beta_{t_{k}}(\omega^1,\cdot) \bigl(t_{k+1}-t_{k}\bigr) + 
\t \varsigma_{t_{k}}(\omega^0,\cdot) \bigl( \t W_{t_{k+1}}-\t  W_{t_{k}}\bigr)
+ \sum_{j=1}^N \t \varsigma_{i,j}^0(\omega^0,\cdot) W^0\bigl([t_{k},t_{k+1}) \times A_{j}\bigr)(\omega^0)
\Bigr]. 
\end{split}
\end{equation*}
Clearly, the drift has very low influence on the value of $\Gamma_{k}$. 
Precisely, for investigating the limit (in $\PP^0$ probability)
of $\sum_{k=0}^{K-1} \Gamma_{k}$, we can focus on the `reduced' version of $\Gamma_{k}$:
\begin{equation*}
\begin{split}
\Gamma_{k} &:= D^2 \tilde{H}(\ch \chi_{t_{k}})  \Bigl[ 
\t \varsigma_{t_{k}}(\omega^0,\cdot) \bigl( \t W_{t_{k+1}}- \t W_{t_{k}}\bigr)
+ \sum_{j=1}^N \varsigma_{i,j}^0(\omega^0,\cdot) W^0\bigl([t_{k},t_{k+1}) \times A_{j}\bigr)(\omega^0), 
\\
&\hspace{50pt}
\t \varsigma_{t_{k}}(\omega^0,\cdot) \bigl( \t W_{t_{k+1}}- \t W_{t_{k}}\bigr)
+ \sum_{j=1}^N \varsigma_{i,j}^0(\omega^0,\cdot) W^0\bigl([t,t+h] \times A_{j}\bigr)(\omega^0)
\Bigr]. 
\end{split}
\end{equation*}
We first notice that
\begin{equation*}
D^2 \tilde{H}(\ch \chi_{t_{k}})  \bigl[\t \varsigma_{t_{k}}(\omega^0,\cdot) \bigl( 
\t W_{t_{k+1}} - \t W_{t_{k}} \bigr) ,
\t \varsigma_{i,j}^0(\omega^0,\cdot) W^0\bigl([t_{k},t_{k+1}) \times A_{j}\bigr)(\omega^0) \bigr]= 0,
\end{equation*}
the reason being that 
\begin{equation*}
\begin{split}
&D^2 \tilde{H}(\ch \chi_{t_{k}})   \bigl[\t \varsigma_{t_{k}}(\omega^0,\cdot) \bigl( 
\t W_{t_{k+1}} - \t W_{t_{k}} \bigr) ,
\t \varsigma_{i,j}^0(\omega^0,\cdot) W^0\bigl([t_{k},t_{k+1}) \times A_{j}\bigr)(\omega^0) \bigr]
\\
&= \lim_{\epsilon \rightarrow 0} \epsilon^{-1}\bigl[ D \tilde{H}\bigl(
\ch \chi_{t_{k}} + \epsilon \t \varsigma_{i,j}^0(\omega^0,\cdot) W^0\bigl([t_{k},t_{k+1}) \times A_{j}\bigr)(\omega^0)
\bigr) 
\\
&\hspace{150pt}
-  
D \tilde{H}(
\ch \chi_{t_{k}}) \bigr]  \bigl[Ê\t \varsigma_{t_{k}}(\omega^0,\cdot) \bigl(
\t W_{t_{k+1}}- \t W_{t_{k}} \bigr) \bigr],
\end{split}
\end{equation*}
which is zero by the independence argument used in \eqref{eq:22:1:3}. 
Following the proof of \eqref{eq:22:1:5}, 
\begin{equation*}
\begin{split}
&D^2 \tilde{H}(\ch \chi_{t_{k}})  
\Bigl[\sum_{j=1}^N \t \varsigma_{i,j}^0(\omega^0,\cdot) W^0\bigl([t_{k},t_{k+1}) \times A_{j}\bigr)(\omega^0) ,
\sum_{j=1}^N \t \varsigma_{i,j}^0(\omega^0,\cdot) W^0\bigl([t_{k},t_{k+1}) \times A_{j}\bigr)(\omega^0) 
\Bigr]
\\
&= 
\sum_{j,j'=1}^N D^2 \tilde{H}(\ch \chi_{t_{k}})  \bigl[ 
\t \varsigma_{i,j}^0(\omega^0,\cdot),\t \varsigma_{i,j'}^0(\omega^0,\cdot)
\bigr] 
W^0\bigl([t_{k},t_{k+1}) \times A_{j}\bigr)(\omega^0)
W^0\bigl([t_{k},t_{k+1}) \times A_{j'}\bigr)(\omega^0). 
\end{split}
\end{equation*}
The second line reads as a the bracket of a discrete stochastic integral. 
Letting $\ch \varsigma_{i,j}^0(\omega^0)
= \t \varsigma_{i,j}^0(\omega^0,\cdot)$,
it is quite standard to check 
\begin{equation*}
\begin{split}
&\sum_{k=0}^{K-1}
\sum_{j,j'=1}^N D^2 \tilde{H}(\ch \chi_{t_{k}})  \bigl[ 
\ch \varsigma_{i,j}^0,\ch \varsigma_{i,j'}^0
\bigr] 
W^0\bigl([t_{k},t_{k+1}) \times A_{j}\bigr)
W^0\bigl([t_{k},t_{k+1}) \times A_{j'}\bigr) 
\\
&\hspace{15pt}
- \sum_{k=0}^{K-1}
\sum_{j=1}^N D^2 \tilde{H}(\ch \chi_{t_{k}})  \bigl[ 
\ch \varsigma_{i,j}^0,\ch \varsigma_{i,j}^0
\bigr] \bigl(t_{k+1} - t_{k}\bigr) \nu(A_{j}) \rightarrow 0
\end{split}
\end{equation*}
in $\PP^0$ probability as $h$ tends to $0$. Noticing that 
\begin{equation*}
\sum_{k=0}^{K-1}
\sum_{j=1}^N D^2 \tilde{H}(\ch \chi_{t_{k}})  \bigl[ 
\ch \varsigma_{i,j}^0,\ch \varsigma_{i,j}^0
\bigr] \bigl(t_{k+1} - t_{k}\bigr) \nu(A_{j})
= \sum_{k=0}^{K-1}
\int_{t_{k}}^{t_{k+1}} \int_{\Xi}
D^2 \tilde{H}(\ch \chi_{t_{k}})  \bigl[ 
\ch \varsigma_{s,\xi}^0,\ch \varsigma_{s,\xi}^0
\bigr]  d\nu(\xi) ds,
\end{equation*}
we deduce that 
\begin{equation*}
\begin{split}
&\sum_{k=0}^{K-1}
\sum_{j,j'=1}^N D^2 \tilde{H}(\ch \chi_{t_{k}})  \bigl[ 
\ch \varsigma_{i,j}^0,\ch \varsigma_{i,j'}^0
\bigr] 
W^0\bigl([t_{k},t_{k+1}) \times A_{j}\bigr)
W^0\bigl([t_{k},t_{k+1}) \times A_{j'}\bigr)
\\
&\hspace{15pt}
- \int_{0}^T \int_{\Xi}
D^2 \tilde{H}(\ch \chi_{s})  \bigl[ 
\ch \varsigma_{s,\xi}^0,\ch \varsigma_{s,\xi}^0
\bigr]  d\nu(\xi) ds
 \rightarrow 0
\end{split}
\end{equation*}
in $\PP^0$ probability as $h$ tends to $0$.
It remains to compute
\begin{equation*}
D^2 \tilde{H}(\ch \chi_{t_{k}})  \bigl[\t  \varsigma_{t_{k}}(\omega^0,\cdot) \bigl( 
\t W_{t_{k+1}} - \t W_{t_{k}} \bigr) , \t \varsigma_{t_{k}}(\omega^0,\cdot) \bigl(
\t W_{t_{k+1}} - \t W_{t_{k}} \bigr) \bigr]. 
\end{equation*}
Recall that this is the limit
\begin{equation*}
\begin{split}
\lim_{\varepsilon \rightarrow 0} \frac{1}{\varepsilon^2}
&\bigl[ \tilde{H}\bigl(\t \chi_{t_{k}}(\omega^0,\cdot) + \varepsilon 
\t \varsigma_{t_{k}}(\omega^0,\cdot) (\t W_{t_{k+1}}- \t W_{t_{k}}) \bigr)
\\
&\hspace{15pt}+ \tilde{H}\bigl(\t \chi_{t_{k}}(\omega^0,\cdot) - \varepsilon 
\t \varsigma_{t_{k}}(\omega^0,\cdot) (\t W_{t_{k+1}}- \t W_{t_{k}}) \bigr)
- 2 \tilde{H}\bigl(\t \chi_{t_{k}}(\omega^0,\cdot) \bigr) \bigr],
\end{split}
\end{equation*}
which is the same as 
\begin{equation*}
\lim_{\varepsilon \rightarrow 0} \frac{1}{\varepsilon^2}
\bigl[ \tilde{H}\bigl(\t \chi_{t_{k}}(\omega^0,\cdot) + \varepsilon 
\t \varsigma_{t_{k}}(\omega^0,\cdot) \sqrt{t_{k+1}-t_{k}} \t G \bigr)
- \tilde{H}\bigl(\t \chi_{t_{k}}(\omega^0,\cdot) \bigr) \bigr],
\end{equation*}
where $\t G$ is independent of $(\t W_{t})_{0 \leq t \leq T}$, and ${\mathcal N}(0,1)$ distributed. Therefore, 
\begin{equation*}
D^2 \tilde{H}(\ch \chi_{t_{k}})  \bigl[ \ch \varsigma_{t_{k}} \bigl( 
\t W_{t_{k+1}} - \t W_{t_{k}} \bigr) ,\ch \varsigma_{t_{k}} \bigl(
\t W_{t_{k+1}}- \t W_{t_{k}} \bigr) \bigr] =  \bigl( t_{k+1}-t_{k} \bigr) 
D^2 \tilde{H}(\ch \chi_{t_{k}})  \bigl[ \ch \varsigma_{t_{k}} \t G,\ch \sigma_{t_{k}} \t G 
\bigr], 
\end{equation*}
which is enough to prove that
\begin{equation*}
\begin{split}
&\sum_{k=0}^{K-1}
D^2 \tilde{H}(\ch \chi_{t_{k}})  \bigl[\ch  \varsigma_{t_{k}} \bigl( 
\t W_{t_{k+1}} - \t W_{t_{k}} \bigr) , \ch \varsigma_{t_{k}} \bigl(
\t W_{t_{k+1}} - \t W_{t_{k}} \bigr) \bigr]
\rightarrow \int_{0}^T 
D^2 \tilde{H}(\ch \chi_{s})  \bigl[ \ch \varsigma_{s} \t G,\ch \varsigma_{s} \t G 
\bigr] ds 
\end{split}
\end{equation*}
in $\PP^0$ probability as $h$ tends to $0$.

{\small
\bibliographystyle{plain}

}

\end{document}